\newcommand{\dy}{\, \mathrm{d}y}
\newcommand{\dx}{\, \mathrm{d}x}
\newcommand{\dt}{\, \mathrm{d}t}
\newcommand{\dz}{\, \mathrm{d}z}
\newcommand{\kom}[1]{}
\renewcommand{\kom}[1]{{\bf [#1]}}
\numberwithin{equation}{section}
 \def\1{\raisebox{2pt}{\rm{$\chi$}}}
\newtheorem{theorem}{Theorem}[section]
\newtheorem{corollary}[theorem]{Corollary}
\newtheorem{lemma}[theorem]{Lemma}
\newtheorem{proposition}[theorem]{Proposition}
\newcommand{\CR}{{\mathcal{R}}}
\newcommand{\R}{{\mathbb R}}
\newcommand{\N}{{\mathbb N}}
\newcommand{\Z}{{\mathbb Z}}
\newcommand{\CB}{{\mathcal B}}
 \def\1{\raisebox{2pt}{\rm{$\chi$}}}
\newcommand{\norm}[1]{\left|\left|#1\right|\right|}
\def\vint_#1{\mathchoice%
          {\mathop{\kern 0.2em\vrule width 0.6em height 0.69678ex depth -0.58065ex
                  \kern -0.8em \intop}\nolimits_{\kern -0.4em#1}}%
          {\mathop{\kern 0.1em\vrule width 0.5em height 0.69678ex depth -0.60387ex
                  \kern -0.6em \intop}\nolimits_{#1}}%
          {\mathop{\kern 0.1em\vrule width 0.5em height 0.69678ex
              depth -0.60387ex
                  \kern -0.6em \intop}\nolimits_{#1}}%
          {\mathop{\kern 0.1em\vrule width 0.5em height 0.69678ex depth -0.60387ex
                  \kern -0.6em \intop}\nolimits_{#1}}}
\def\vintslides_#1{\mathchoice%
          {\mathop{\kern 0.1em\vrule width 0.5em height 0.697ex depth -0.581ex
                  \kern -0.6em \intop}\nolimits_{\kern -0.4em#1}}%
          {\mathop{\kern 0.1em\vrule width 0.3em height 0.697ex depth -0.604ex
                  \kern -0.4em \intop}\nolimits_{#1}}%
          {\mathop{\kern 0.1em\vrule width 0.3em height 0.697ex depth -0.604ex
                  \kern -0.4em \intop}\nolimits_{#1}}%
          {\mathop{\kern 0.1em\vrule width 0.3em height 0.697ex depth -0.604ex
                  \kern -0.4em \intop}\nolimits_{#1}}}
\newcommand{\intav}{\vint}
\newcommand{\aveint}[2]{\mathchoice%
          {\mathop{\kern 0.2em\vrule width 0.6em height 0.69678ex depth -0.58065ex
                  \kern -0.8em \intop}\nolimits_{\kern -0.45em#1}^{#2}}%
          {\mathop{\kern 0.1em\vrule width 0.5em height 0.69678ex depth -0.60387ex
                  \kern -0.6em \intop}\nolimits_{#1}^{#2}}%
          {\mathop{\kern 0.1em\vrule width 0.5em height 0.69678ex depth -0.60387ex
                  \kern -0.6em \intop}\nolimits_{#1}^{#2}}%
          {\mathop{\kern 0.1em\vrule width 0.5em height 0.69678ex depth -0.60387ex
                  \kern -0.6em \intop}\nolimits_{#1}^{#2}}}
\newcommand{\re}{\mathbb{R}}
\begin{document}
\title[
Centered fractional maximal radial function
]
{Regularity of the centered fractional maximal function on radial functions
}
\author{David Beltran and Jos{\'e} Madrid}

\date{\today}
\subjclass[2010]{42B25, 26A45, 46E35, 46E39.}
\keywords{Centered Fractional maximal operator, Sobolev spaces, Radial functions}

\address{David Beltran: Department of Mathematics, University of Wisconsin, 480 Lincoln Drive, Madison, WI, 53706, USA.}
\email{dbeltran@math.wisc.edu}
	
\address{José Madrid: Department of  Mathematics,  University  of  California,  Los  Angeles (UCLA),  Portola Plaza 520, Los  Angeles,
California, 90095, USA}
\email{jmadrid@math.ucla.edu}

\keywords{Centered maximal function, Sobolev spaces, Boundedness, Continuity}
\subjclass[2010]{42B25, 46E35}

\maketitle
\begin{abstract}
We study regularity properties of the centered fractional maximal function $M_{\beta}$. More precisely, we prove that the map $f \mapsto |\nabla M_\beta f|$ is bounded and continuous from $W^{1,1}(\R^d)$ to $L^q(\R^d)$ in the endpoint case $q=d/(d-\beta)$ if $f$ is a radial function. For $d=1$, the radiality assumption can be removed. This corresponds to the counterparts of known results for the non-centered fractional maximal function. The main new idea consists in relating the centered and non-centered fractional maximal function at the derivative level.  
\end{abstract}

\section{Introduction}
Given $f \in L^1_{loc}(\R^d)$ and $0 \leq \beta < d$, the centered fractional Hardy-Littlewood maximal operator $M_{\beta}$ is defined by
\begin{equation*}
M_{\beta} f(x):=\sup_{r \geq 0}
\frac{r^\beta}{|B(x,r)|}\int_{B(x,r)}|f(y)| \dy\, 
\end{equation*}
for every $x\in\R^d\,$. The non-centered version of $M_{\beta}$, denoted by $\widetilde{M}_\beta$, is defined by taking the supremum over all balls containing $x$. The non-fractional case $\beta=0$ corresponds to the classical maximal function and is denoted  by $M=M_0$ and $\widetilde{M}=\widetilde{M}_0$.


Questions regarding the regularity of maximal functions started with the work of Kinnunen \cite{Kinnunen1997}, who showed that if $f \in W^{1,p}(\R^d)$ with $1<p< \infty$, then $M f \in W^{1,p}(\R^d)$ and
$$
|\nabla M f(x)| \leq M (|\nabla f|)(x)
$$
almost everywhere in $\R^d$. The fractional case was first studied by Kinnunen and Saksman \cite{KS2003}, who noticed that Kinnunen's result extends to $0 < \beta < d$, with $M_\beta f \in W^{1,q}(\R^d)$ for $\frac{1}{q}=\frac{1}{p}-\frac{\beta}{d}$ and $1 < p < \infty$, and moreover, showed that if $f \in L^p(\R^d)$ with $1 < p < d$ and $1 \leq \beta < d/p$, then
\begin{equation}\label{eq:KS}
|\nabla M_\beta f (x)| \leq C M_{\beta - 1} f(x)
\end{equation}
almost everywhere in $\R^d$. The continuity of the map $f\mapsto M_{\beta}f$ from $W^{1,p}(\R^d)$ to $W^{1,q}(\R^d)$ was established by Luiro in \cite{Luiro2007} for $\beta=0$ and $1 < p <\infty$, although it immediately generalises to the case $0<\beta<d$; of course the sublinearity of $M_\beta$ ceases to hold at the derivative level, so the continuity of this map on Sobolev spaces does not immediately follow from its boundedness. It is noted that these results continue to work for the non-centered counterparts $\widetilde{M}$ and $\widetilde{M}_\beta$. This is strongly attached to the case $p>1$, as the mentioned results rely on the Lebesgue space boundedness of the aforementioned maximal functions.

In the case $p=1$, one cannot expect $M_\beta f \in W^{1,\frac{d}{d-\beta}}$ if $f \in W^{1,1}(\R^d)$ for any $0 \leq \beta < d$, as $M_\beta$ fails to be bounded at the level of Lebesgue spaces. However, one may still ask whether $M_\beta f$ is weakly differentiable and whether the map $f \mapsto |\nabla M_\beta f|$ is bounded and continuous from $W^{1,1}(\R^d)$ to $L^{\frac{d}{d-\beta}}(\R^d)$. This question was originally posed for $\beta=0$, is commonly referred to as the $W^{1,1}(\R^d)$-problem, and it is well known that its study significantly differs from the centered to the non-centered case. In this paper, we concern ourselves with $\beta > 0$ and the centered fractional maximal function; for $\beta =0$, see \cite{Tanaka2002, AP2007} and \cite{CMP2017} for boundedness and continuity results for $\widetilde{M}$ if $d=1$, \cite{Kurka2010} for boundedness for $M$  if $d=1$, and \cite{Luiro2017} for boundedness for $\widetilde{M}$ over radial functions if $d>1$.

In the strictly fractional case, it was noted by Carneiro and the second author \cite{CM2015} that the interesting endpoint case $p=1$ corresponds to the range $0 < \beta < 1$. Indeed, if $1 \leq \beta < d$, \eqref{eq:KS} and the Gagliardo--Nirenberg--Sobolev inequality immediately yield, together with the bounds of $M_{\beta-1}$, that
$$
\| \nabla M_\beta f \|_q \leq C \| M_{\beta- 1} f \|_q \leq C \| f \|_{\frac{d}{d-1}} \leq C \| \nabla f \|_1
$$
holds for $q=\frac{d}{d-\beta}$. This settles the boundedness of the map $f \mapsto |\nabla M_\beta f|$ from $W^{1,1}(\R^d)$ to $L^q(\R^d)$ if $1 < \beta < d$. The continuity of this map between such function spaces was recently established by the authors in \cite{BM2019}. In this situation, the same analysis continues to work for $\widetilde{M}_\beta$.

If $0 < \beta < 1$, the corresponding boundedness result for $\widetilde{M}_\beta$ was established by Carneiro and the second author \cite{CM2015} if $d=1$, whilst its continuity at the derivative level was later shown by the second author \cite{Madrid2017}. Our first result is a counterpart for the centered case.

\begin{theorem}\label{thm:d=1}
Let $0 < \beta < 1$ and $q=1/(1-\beta)$. If $f \in W^{1,1}(\R)$, then $M_\beta f$ is differentiable almost everywhere and
\begin{equation}\label{eq:endpoint sobolev d=1}
    \| (M_\beta f)' \|_{L^{q} (\R)} \leq C \| f' \|_{L^1(\R)},
\end{equation}
where $C= 2^{-3\beta -2+ \frac{4}{\beta}} 3^{\frac{2(1-\beta)^2}{\beta}}$. Moreover, the map $f \mapsto | (M_\beta f)'|$ is continuous from $W^{1,1}(\R)$ to $L^q(\R)$.
\end{theorem}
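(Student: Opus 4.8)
The plan is to transfer the known endpoint boundedness and continuity of the non-centered operator $\widetilde{M}_\beta$ (due to Carneiro and the second author, and to the second author) to the centered operator by comparing the two at the level of derivatives. The starting point is to make sense of $(M_\beta f)'$. The key observation, special to the range $0<\beta<1$, is that for $f\in L^1(\R)$ the quantity $\Phi(x,r):=\frac{r^{\beta-1}}{2}\int_{x-r}^{x+r}|f|$ tends to $0$ both as $r\to 0^+$ and as $r\to\infty$, so the supremum defining $M_\beta f(x)$ is attained at some radius $r(x)\in(0,\infty)$. I would first show that $r(x)$ is locally bounded above and below away from $0$ and $\infty$ (the upper bound from $\Phi(x,r)\le \frac{r^{\beta-1}}{2}\|f\|_1$, the lower bound from $\Phi(x,r)\le r^\beta\|f\|_{L^\infty(B(x,r))}$ for small $r$), so that locally $M_\beta f$ is a supremum of uniformly Lipschitz functions and is therefore locally Lipschitz; in particular it is differentiable almost everywhere. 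At a point of differentiability, the first-order condition $\partial_r\Phi(x,r(x))=0$ and the envelope theorem give
\begin{equation}\label{eq:plan-centered-deriv}
(M_\beta f)'(x)=\frac{r(x)^{\beta-1}}{2}\big(|f(x+r(x))|-|f(x-r(x))|\big).
\end{equation}

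Since $q(1-\beta)=1$, raising \eqref{eq:plan-centered-deriv} to the $q$-th power yields
\begin{equation}\label{eq:plan-reduction}
\int_{\R}|(M_\beta f)'(x)|^q\,\dx = 2^{-q}\int_{\R}\frac{\big||f(b(x))|-|f(a(x))|\big|^q}{r(x)}\,\dx,
\end{equation}
where $a(x)=x-r(x)$ and $b(x)=x+r(x)$. The corresponding computation for $\widetilde{M}_\beta$, carried out at points where the optimal non-centered interval $[a,b]\ni w$ has the variable point $w$ as an endpoint, produces an integrand of the same shape but with \emph{one endpoint pinned to the base point}; this is precisely the quantity that the non-centered endpoint result controls by $C\|f'\|_1^q$. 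The heart of the matter, and the main new idea, is therefore a pointwise comparison relating the centered optimal interval $[a(x),b(x)]$ to the optimal non-centered intervals based at its endpoints. The first-order condition, which here reads $(1-\beta)\int_a^b|f|=r\big(|f(b)|+|f(a)|\big)$, shows that $[a(x),b(x)]$ is a critical configuration for the one-sided non-centered problem based at $a(x)$ and at $b(x)$; I would exploit this to dominate the integrand in \eqref{eq:plan-reduction} pointwise by the corresponding non-centered integrands at $a(x)$ and $b(x)$, up to the explicit constant. Summing the two contributions and invoking the non-centered bound then gives \eqref{eq:endpoint sobolev d=1}.

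The main obstacle is exactly this comparison step. Unlike the non-centered case, where one endpoint of the optimal interval is the point at which the derivative is taken, in the centered case both endpoints move with $x$, so transferring \eqref{eq:plan-reduction} to the non-centered integral requires controlling the change of variables $x\mapsto a(x)$ (respectively $x\mapsto b(x)$) and ruling out the degenerate scenario in which optimal balls containing a fixed point $y$ accumulate with comparable radius; concretely, one needs a bound of the form $\sup_y\int_{\{x:\,|x-y|<r(x)\}}r(x)^{-1}\,\dx\le C$. I expect the resolution to come from the monotonicity encoded in the first-order condition, which prevents $r(x)$ from oscillating too wildly in $x$ and thereby yields the requisite overlap control; this is the technical core of the argument.

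For the continuity statement, boundedness alone does not suffice, since $f\mapsto(M_\beta f)'$ is not sublinear. Given $f_n\to f$ in $W^{1,1}(\R)$, I would argue that $M_\beta f_n\to M_\beta f$ locally uniformly and, using the stability of the optimal radii, that $(M_\beta f_n)'\to(M_\beta f)'$ almost everywhere along a subsequence. The comparison of the previous paragraphs transfers the non-centered continuity result into an equi-integrability statement for the family $\{|(M_\beta f_n)'|^q\}$ (the dominating non-centered powers converge in $L^1(\R)$, hence are uniformly integrable); combined with the almost-everywhere convergence, Vitali's convergence theorem then upgrades the convergence to $L^q(\R)$, giving continuity of $f\mapsto|(M_\beta f)'|$. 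The delicate point is again the behaviour of the optimal radii, which must be shown to be stable under $W^{1,1}$ perturbations of $f$, and it is precisely the positivity of $r(x)$ valid for $0<\beta<1$ that is used decisively throughout.
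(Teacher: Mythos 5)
Your treatment of the a.e.\ differentiability and of the derivative formula is sound and essentially identical to the paper's: the paper also observes that good radii exist and are locally bounded below (continuity of $M_\beta f$ plus $M_\beta f(x)\le r^\beta\|f\|_\infty$), so that locally $M_\beta f$ agrees with a truncated maximal function $M_\beta^\varepsilon f$ which is Lipschitz (Lemmas \ref{lemma:M truncated} and \ref{lemma:derivative d=1}), and your envelope identity $(M_\beta f)'(x)=\tfrac{r(x)^{\beta-1}}{2}\big(|f(x+r(x))|-|f(x-r(x))|\big)$ is exactly the paper's formula $(M_\beta f)'(x)=r^\beta\intav_{B_x}|f|'$ after the fundamental theorem of calculus. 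The reduction using $q\beta=q-1$ is also the paper's first step.

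The core of your argument, however, has a genuine gap, and the concrete lemma you reduce to is false. You ask for $\sup_y\int_{\{x:\,|x-y|<r(x)\}}r(x)^{-1}\dx\le C$, with $r(x)$ a good radius at $x$. Take $f\in W^{1,1}(\R)$ a fixed nonnegative bump supported in $[-1,1]$ with $\int f=1$. For $|x|$ large, any $r\le|x|$ captures at most half the mass, giving value at most $\tfrac14(|x|-1)^{\beta-1}$, while $r=|x|+1$ gives $\tfrac12(|x|+1)^{\beta-1}$; since $\Phi(x,r)$ is decreasing for $r\ge|x|+1$, every good radius lies in $(|x|,|x|+1]$. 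Hence every optimal ball contains $y=0$ and $\int_{\{x:\,|x|<r(x)\}}r(x)^{-1}\dx\gtrsim\int^\infty\frac{\dx}{|x|+1}=\infty$. No monotonicity extracted from the first-order condition can repair this, because this example satisfies the first-order condition. What makes the overlap count work in the paper is an ingredient absent from your plan: before counting overlaps, the derivative is \emph{localized to a level set of $f$}. Lemma \ref{lemma2 d=1} shows, by an intermediate-value argument applied to the difference $|f(x+r_x)|-|f(x-r_x)|$, that $|(M_\beta f)'(x)|\le 4\,r_x^\beta\intav_{B_x}|f'|\,\chi_{E_x}$ where $E_x=\{z\in B_x:\tfrac12|f|_{B_x}\le|f(z)|\le2|f|_{B_x}\}$. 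The overlap integral then carries the factor $\chi_{E_x}(y)$, so only points with $|f(y)|\sim|f|_{B_x}$ contribute; and for two \emph{intersecting} good balls with \emph{comparable averages}, Proposition \ref{prop comparable radii} (which uses $\beta>0$ crucially, via $r_2^\beta|f|_{B_2}\ge(3r_1)^\beta|f|_{B(x_2,3r_1)}$) forces the radii to be comparable, which is exactly what bounds $\int_\Omega\chi_{B_x}(y)\chi_{E_x}(y)|B_x|^{-1}\dx$ uniformly in $y$. In the counterexample above, $|f(0)|=\|f\|_\infty$ while $|f|_{B_x}\sim|x|^{-1}$, so $0\notin E_x$ and those balls are harmlessly discarded; without the $E_x$ restriction they destroy the estimate.

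Two further remarks. First, your plan routes the bound through the non-centered endpoint theorem, but this transfer is also unproven: the centered interval $[a(x),b(x)]$ is only a \emph{critical} configuration for the non-centered problem at $a(x)$, not necessarily its maximizer, so there is no pointwise domination by $|(\widetilde M_\beta f)'(a(x))|$, and the change of variables $x\mapsto a(x)$ lacks any Jacobian control ($r(\cdot)$ need not even be continuous). The paper's one-dimensional proof of \eqref{eq:endpoint sobolev d=1} is in fact self-contained and never invokes the non-centered result; the centered/non-centered comparison at the derivative level is the paper's key idea only in the higher-dimensional Theorem \ref{Main Theorem} (Lemma \ref{lemma radios grandes}). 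Second, for the continuity statement no new argument is needed: the paper observes that once \eqref{eq:endpoint sobolev d=1} holds, continuity follows from \cite[Theorem 1.3]{BM2019}; your Vitali-type sketch would anyway inherit the gap above, since its dominating functions come from the same unproven comparison.
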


The continuity statement in the above theorem was already established by the authors in \cite[Theorem 1.3]{BM2019} under the boundedness hypothesis. The new contribution is therefore the almost everywhere differentiability of $M_\beta f$ and the bound \eqref{eq:endpoint sobolev d=1}. It turns out that special features of $\beta>0$ and $d=1$ allow to analyse $M_\beta$ in a similar way to $\widetilde{M}_\beta$, using some of the arguments introduced by Luiro and the second author \cite{LM2017} in the higher-dimensional radial problem.

Our main result is in higher dimensions. More precisely, it was shown by Luiro and the second author \cite{LM2017} that if $0 < \beta < 1$, the bound $\| \nabla \widetilde M_\beta f \|_{\frac{d}{d-\beta}} \leq C \| \nabla f \|_1$ holds if $f \in W^{1,1}(\R^d)$ is a radial function. The continuity of this map was established by the authors in \cite{BM2019}. Here, we prove the corresponding results for $M_\beta$.

\begin{theorem}\label{Main Theorem}
Let $0<\beta<1$ and $q=d/(d-\beta)$. If $f \in W^{1,1}_{\mathrm{rad}}(\R^d)$, then $M_\beta f$ is differentiable almost everywhere and there exists a constant $C=C(d,\beta)>0$ such that
\begin{equation}\label{eq:sobolev bound}
\|\nabla M_{\beta}f\|_{L^{q}(\R^d)}\leq C\|\nabla f\|_{L^{1}(\R^d)}.
\end{equation}
Moreover, the map $f \mapsto |\nabla M_\beta f|$ is continuous from $W^{1,1}_{\mathrm{rad}}(\R^d)$ to $L^q(\R^d)$.
\end{theorem}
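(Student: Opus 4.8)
The plan is to separate the two assertions. The continuity of $f \mapsto |\nabla M_\beta f|$ from $W^{1,1}_{\mathrm{rad}}(\R^d)$ to $L^q(\R^d)$ is already available from \cite{BM2019}, where it is shown to follow from the boundedness \eqref{eq:sobolev bound} together with a.e.\ differentiability; hence I would treat the continuity as a black box and concentrate the entire effort on establishing that $M_\beta f$ is differentiable a.e.\ and satisfies \eqref{eq:sobolev bound}. Exploiting radiality, I would first record that $M_\beta f$ is itself radial, say $M_\beta f(x) = m(|x|)$, so that $\nabla M_\beta f(x) = m'(|x|)\, x/|x|$ and the desired estimate collapses to the one-dimensional weighted inequality $\int_0^\infty |m'(s)|^q\, s^{d-1}\, ds \leq C \big(\int_0^\infty |f_0'(s)|\, s^{d-1}\, ds\big)^q$, where $f(x)=f_0(|x|)$. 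A.e.\ differentiability of $M_\beta f$ then reduces to a.e.\ differentiability of the profile $m$ on $(0,\infty)$ together with a separate, harmless analysis at the origin.

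The next step is the analytic groundwork for $m$. Writing the centered fractional average as $U(s,r) := \frac{r^{\beta-d}}{\omega_d}\int_{B(s e, r)}|f|$, which is independent of the unit vector $e$ by radiality, I would show that for each $s>0$ the supremum $m(s)=\sup_{r>0} U(s,r)$ is attained at some radius $r(s)$ and that $s \mapsto m(s)$ is locally Lipschitz on $(0,\infty)$; by Rademacher this yields differentiability a.e. At a point of differentiability with interior optimal radius, an envelope (Danskin-type) argument gives the radial derivative as the surface integral $m'(s) = \frac{r(s)^{\beta-1}}{\omega_d}\int_{S^{d-1}} f_0\big(|s e + r(s)\omega|\big)\,\langle \omega, e\rangle\, d\omega$, whose sign is governed by the competition between the outer and inner caps of the optimal sphere.

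The heart of the argument, and the main new idea advertised in the abstract, is to control this centered derivative by the non-centered maximal function, for which the endpoint bound $\|\nabla \widetilde{M}_\beta f\|_q \leq C\|\nabla f\|_1$ on radial functions is already known from \cite{LM2017}; write $\widetilde{M}_\beta f(x) = \tilde m(|x|)$. The mechanism I would use is that the centered optimal ball $B(s e, r(s))$ is an admissible competitor for $\widetilde{M}_\beta$ at each of its boundary points: evaluating at the outer boundary point, at distance $s+r(s)$ from the origin, yields $m(s) \leq \tilde m\big(s+r(s)\big)$, while running the comparison in reverse, by re-reading a non-centered optimal ball as a centered ball about its own center, gives the opposite estimate $\tilde m(t) \leq m\big(c(t)\big)$ in terms of the center $c(t)$ of the optimal non-centered ball. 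These two families of inequalities identify the graphs of $m$ and $\tilde m$ up to the reparametrization $s \mapsto s+r(s)$, and differentiating this correspondence expresses $m'$ in terms of $\tilde m'$ composed with the reparametrization, times its Jacobian. Carrying out the change of variables in the weighted $L^q$ integral while tracking the weight $s^{d-1}$ then transfers the known bound for $\widetilde{M}_\beta$ to $M_\beta$, up to a constant depending only on $d$ and $\beta$.

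I expect the main obstacle to be precisely this comparison on the region where $M_\beta f < \widetilde{M}_\beta f$ strictly, i.e.\ where the non-centered optimum is genuinely off-center: there the reparametrization $s \mapsto s+r(s)$ need be neither monotone nor Lipschitz, so the change of variables must be justified through a careful decomposition of $(0,\infty)$ into the disjoint ranges of the active radii, in the spirit of \cite{LM2017}, and the two cap contributions in the surface-integral formula must be balanced against $\int |f_0'|\, s^{d-1}$ with the sharp exponent $q=d/(d-\beta)$. Secondary technical points are the degeneracies $r(s)\to 0$ and $r(s)\to\infty$, and the behaviour near the origin, where the ball may swallow the center and the surface formula must be supplemented; these I would dispatch by direct estimates showing that they contribute controllably to the $L^q$ norm.
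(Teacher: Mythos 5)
Your groundwork is fine as far as it goes: the radial reduction, the existence of optimal radii, and your surface-integral formula for $m'$ (which is just the divergence-theorem rewriting of Luiro's formula \eqref{Luiro formula}) all match the paper's Lemma \ref{basic}. The genuine gap is the step you yourself call the heart of the argument. The two competitor inequalities $m(s) \leq \tilde m\big(s+r(s)\big)$ and $\tilde m(t) \leq m\big(c(t)\big)$ are correct, but they are one-sided bounds evaluated at \emph{mismatched} points, with no point of equality in general (the non-centered optimum at $(s+r(s))e$ has no reason to be the centered ball $B(se,r(s))$); so they do not ``identify the graphs of $m$ and $\tilde m$ up to reparametrization,'' and there is no functional identity to differentiate. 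Chaining them only yields $m(s)\leq m\big(c(s+r(s))\big)$, which carries no first-order information, and one cannot differentiate inequalities. On top of that, $r(s)$ can jump when the maximizing radius switches between distinct local maxima, so $s\mapsto s+r(s)$ need not even be continuous, and no decomposition into ``ranges of active radii'' can substitute for the missing identity. The paper's relation between the two operators (Lemma \ref{lemma radios grandes}) is of a different nature: it is pointwise at a \emph{fixed} $x$, obtained by inserting the trivial bound $M_\beta f(x)\leq \widetilde{M}_\beta f(x)$ into the exact first-variation identities for good balls (Corollary \ref{coro good eq}), giving
$|\nabla M_\beta f(x)| \leq r^\beta \intav_{B}\nabla|f|(y)\cdot \frac{y}{|x|}\dy - \widetilde r^{\,\beta} \intav_{\widetilde B}\nabla|f|(y)\cdot \frac{y}{|x|}\dy + \nabla \widetilde{M}_\beta f(x)\cdot \frac{x}{|x|}$,
with every term at the same point; no reparametrization or change of variables enters anywhere.

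Two further corrections. First, even granting a usable relation, the paper does \emph{not} transfer the known bound $\|\nabla \widetilde{M}_\beta f\|_q \lesssim \|\nabla f\|_1$ of \cite{LM2017} as a black box: the extra terms in Lemma \ref{lemma radios grandes} force the case analysis of Section \ref{sec:main theorem} ($r_x$ versus $|x|/4$, the sign of $\nabla M_\beta f(x)\cdot x$, the size of $\widetilde r_x$), in which the non-centered contribution is re-estimated by hand via Fubini-type arguments and Lemma \ref{lemma2}; the relation lets one reuse non-centered \emph{techniques}, not the non-centered \emph{theorem}. Second, for $d>1$ the continuity statement is not available as a black box from \cite{BM2019} either: that result concerns $\widetilde{M}_\beta$, and the centered radial case requires the scheme of Section \ref{sec:continuity}, in particular González-Riquelme's idea to obtain smallness of $\nabla M_\beta f_j$ on a small ball around the origin (Lemma \ref{bolita}). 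A smaller point: your claim that $m$ is locally Lipschitz on $(0,\infty)$ cannot be run as in $d=1$, since radial $W^{1,1}(\R^d)$ functions may be unbounded at the origin; this is exactly why Lemma \ref{basic} splits each annulus according to whether $M_\beta f = M_\beta^I f$ or the good radii are bounded below.
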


In contrast to Theorem \ref{thm:d=1}, we require a more refined analysis of the operator $M_\beta$ than that performed for $\widetilde{M}_\beta$ in \cite{LM2017}. A key new fundamental idea consists in establishing a relation between $M_\beta$ and $\widetilde{M}_\beta$ at the derivative level. Whilst $M_\beta f(x) \leq \widetilde{M}_\beta f(x) \leq 2^{d-\beta} M_\beta f(x)$, this comparability ceases to hold for the derivatives. However, we are able to establish an inequality allowing to control $\nabla M_\beta f(x)$ by $\nabla \widetilde{M}_\beta f(x)$ and some additional terms (see Lemma \ref{lemma radios grandes}). This inequality is one of the main ingredients in the proof of Theorem \ref{Main Theorem}, and allows to exploit non-centered techniques. We believe this is the first time that such a connection between centered and non-centered maximal functions has been made in the study of regularity questions. Whilst our relation is only valid for $\beta >0$, we hope that this new perspective could also be useful to better understand regularity properties for $M$.

It is noted that the main result in Theorem \ref{Main Theorem} is the endpoint Sobolev bound \eqref{eq:sobolev bound}. The continuity statement can be deduced via a similar scheme to the one used by the authors \cite{BM2019} in the non-centered case, together with a new idea recently introduced by González-Riquelme \cite{GR}. Interestingly, he obtained endpoint Sobolev results for a version of $\widetilde{M}_\beta f$ defined on the sphere $S^{d-1}$ if $f$ is polar; see also \cite{CGR} for similar results when $\beta=0$. 

Further interesting results concerning regularity of fractional maximal functions have been obtained recently; we refer the interested reader to \cite{RSW} for the boundedness of fractional maximal functions on domains (see also \cite{HKKT2015}), to \cite{BRS2018} for results on lacunary and smooth variants of $M_\beta$, as well as fractional spherical maximal functions, and to \cite{Saari2016} for Poincaré inequalities for $\widetilde{M}_\beta$.
\\

\textit{Structure of the paper.} Section \ref{sec:derivative} is devoted to the representation of the derivative of the maximal function and its almost everywhere differentiability. In Section \ref{sec:d=1}, the proof of Theorem \ref{thm:d=1} is presented. Section \ref{sec:relation} contains preliminary results that will feature in the proof of the main theorem, and in particular the key relation between $M_\beta$ and $\widetilde{M}_\beta$ at the derivative level. The boundedness part of Theorem \ref{Main Theorem} is presented in Section \ref{sec:main theorem}, whilst the continuity part is presented in Section \ref{sec:continuity}.
\\

\textit{Acknowledgements.} The second author would like to thank the Analysis group of the UW-Madison for the hospitality during his visit in September 2019, where part of this research was conducted.


\section{The derivative of the fractional maximal function}\label{sec:derivative}

We start introducing some notation. The value of the Lebesgue exponent $q$ will always be $q=d/(d-\beta)$. Given a measurable set $E \subset \R^d$, $\chi_E$ denotes the characteristic function of $E$ and $E^c:=\R^d \backslash E$  its complementary set in $\R^d$. For $c \in \R$ , we denote by $cE$ the concentric set to $E$ dilated by $c$. The integral average of $f \in L^1_{loc}(\R^d)$ over $E$ is denoted by $f_E=\intav_E f$. The notation $A \lesssim B$ is used if there exists $C>0$ such that $A \leq C B$, and similarly $A \gtrsim B$ and $A \sim B$. The volume of the $d$-dimensional unit ball is denoted by $\omega_d$ and the $(d-1)-$ dimensional Hausdorff measure of the sphere $ S^{d-1}$ is denoted by $\sigma_d$. The weak derivative of $f$ is denoted by $\nabla f$.

 Fix $0\leq \beta<d$. Given a function $f \in W^{1,1}(\R^d)$ and a point $x \in \R^d$, define the families of \textit{good balls} for $f$ at $x$ as
 \begin{equation}\label{def:good balls}
      \mathcal{B}^{\beta}_{x}(f)
=\,\mathcal{B}_x^\beta:=\bigg{\{}B(x,r):
M_{\beta}f(x)=r^{\beta}\intav_{B(x,r)}|f(y)|\dy\,\bigg{\}}
 \end{equation}
 and
  \begin{equation*}
      \widetilde{\mathcal{B}}^{\beta}_{x}(f)
=\,\widetilde{\mathcal{B}}_x^\beta:=\bigg{\{}B(z,r) : x\in\bar{B}(z,r)\,,\, \, 
\widetilde{M}_{\beta}f(x)=r^{\beta}\intav_{B(x,r)}|f(y)|\dy\,\bigg{\}}.
 \end{equation*}

Note that $\CB_x^\beta (f) \neq \emptyset$ and $\widetilde{\CB}_x^\beta (f) \neq \emptyset$ for all $x \in \R^d$ if $f \in L^1(\R^d)$.  Moreover, $\CB_x^\beta(f)$ and $\widetilde{\CB}_x^\beta(f)$ are compact sets. The families of \textit{good radii} are denoted by $\mathcal{R}_x^\beta$ and $\widetilde{\mathcal{R}}_x^\beta$ respectively. Note that, as a consequence of the Lebesgue differentiation theorem, $0\notin \mathcal{R}_x^{\beta}$ and $0\notin \widetilde{\mathcal{R}}_x^{\beta}$.


We start recalling some useful facts observed for $\widetilde{M}_\beta$ in \cite{LM2017}, and establish the analogues for $M_\beta$. First, note the following result, which follows by a simple change of variables.

\begin{proposition}[Proposition 2.1 \cite{LM2017}]\label{afin map 1}
Let $f\in W^{1,1}(\R^d)$ and let $\{L_i\}_{i \in \N}$ be a family of affine maps $L_{i}(y)=a_{i}y+b_{i}$, $a_{i}\in\mathbb{R}$, $b_{i}\in\R^d$. Let $\{h_{i}\}_{i\in\N}\subset\mathbb{R}$ be a sequence such that $h_{i}\to0$ as $i\to\infty$, 
$$
\lim_{i\to\infty}\frac{L_{i}(y)-y}{h_{i}}=g(y)
\,\,\,\,\text{ and }\,\,\,\,
\lim_{i\to\infty}\frac{a_{i}^{\beta}-1}{h_{i}}=\gamma,
$$
where $\gamma\in\re$, $g:\R^d\to\R^d\,$. If $B$ is a ball in $\R^d$ and $B_i:=L_i(B)$, it holds that
 \begin{align*}
&\lim_{i\to\infty}\frac{1}{h_{i}}\left(r^{\beta}_{i}\intav_{B_{i}}|f(y)| \dy-r^{\beta}\intav_{B}|f(y)| \dy\right)\\
&\ \ \ =\,r^{\beta}\intav_{B}\nabla |f|(y)\cdot g(y) \dy+\gamma \, r^{\beta}\intav_{B}|f(y)|\dy\,,
\end{align*}
where $r$ denotes the radius of $B$ and $r_{i}$ the radius of $B_{i}$ for every $i \in \N$.
\end{proposition}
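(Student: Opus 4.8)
The plan is to reduce the whole statement, by a change of variables, to a single $L^1$-convergence of difference quotients for the $W^{1,1}$ function $|f|$, and then to establish that convergence by smooth approximation together with a uniform difference-quotient estimate. First I would record the elementary consequences of the hypotheses. Since $a_i^\beta\to1$ we have $a_i\to1$, so for large $i$ the map $L_i$ is orientation preserving, the radius of $B_i=L_i(B)$ is $r_i=a_i r$, and the substitution $z=L_i(y)$ (Jacobian $a_i^d$, with $|B_i|=a_i^d|B|$) gives
\[
\intav_{B_i}|f(z)|\dz=\intav_{B}|f(L_i(y))|\dy .
\]
Using $r_i^\beta=a_i^\beta r^\beta$, this yields the algebraic identity
\[
r_i^\beta\intav_{B_i}|f|-r^\beta\intav_{B}|f|
=r^\beta\intav_{B}\Big(a_i^\beta\big(|f\circ L_i|-|f|\big)+(a_i^\beta-1)|f|\Big)\dy .
\]
Dividing by $h_i$, the second summand tends to $\gamma\, r^\beta\intav_B|f|$ by hypothesis, so, since $a_i^\beta\to1$, the proposition reduces to proving
\[
\lim_{i\to\infty}\intav_{B}\frac{|f(L_i(y))|-|f(y)|}{h_i}\dy
=\intav_{B}\nabla|f|(y)\cdot g(y)\dy .
\]

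Before the main step I would note the structure of the perturbation. Writing $L_i(y)-y=(a_i-1)y+b_i$, the limit $g$ is necessarily affine (hence bounded on the bounded set $B$), and evaluating $h_i^{-1}(L_i(y)-y)\to g(y)$ at distinct points shows that $(a_i-1)/h_i$ and $b_i/h_i$ converge; in particular $|a_i-1|\lesssim h_i$ and $|b_i|\lesssim h_i$. Consequently $L_i\to\mathrm{id}$ uniformly on $B$, so $L_i(B)\subset B'$ for a fixed slightly larger ball $B'$ and all large $i$, and $h_i^{-1}(L_i(y)-y)\to g(y)$ uniformly on $B$.

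The core step, which I expect to be the main obstacle, is to upgrade the pointwise heuristic $h_i^{-1}(u\circ L_i-u)\approx\nabla u\cdot g$ for $u:=|f|$ into genuine $L^1(B)$ convergence,
\[
\frac{|f\circ L_i|-|f|}{h_i}\longrightarrow \nabla|f|\cdot g\qquad\text{in }L^1(B),
\]
where $\nabla|f|=\sgn(f)\nabla f\in L^1(\R^d)$. I would argue by an $\varepsilon/3$ scheme: choose $u_k\in C_c^\infty(\R^d)$ with $u_k\to u$ in $W^{1,1}(\R^d)$. For the smooth piece, the fundamental theorem of calculus along $t\mapsto y+t(L_i(y)-y)$ gives
\[
\frac{u_k(L_i(y))-u_k(y)}{h_i}=\int_0^1\nabla u_k\big(y+t(L_i(y)-y)\big)\cdot\frac{L_i(y)-y}{h_i}\,\dt ,
\]
and, since $L_i(y)-y\to0$ while $h_i^{-1}(L_i(y)-y)\to g(y)$ uniformly on $B$ and $\nabla u_k$ is continuous, dominated convergence gives convergence to $\intav_B\nabla u_k\cdot g$. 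For the tail $w:=u-u_k$, the same FTC computation for smooth functions, the bound $|L_i(y)-y|\lesssim h_i$ on $B$, and the change of variables $z=y+t(L_i(y)-y)$ (whose image lies in $B'$) yield the uniform estimate
\[
\int_B|w(L_i(y))-w(y)|\dy\lesssim h_i\,\|\nabla w\|_{L^1(B')},
\]
which extends to all $w\in W^{1,1}(\R^d)$ by density. Applied to $w=u-u_k$, it controls the tail by $\|\nabla(u-u_k)\|_{L^1(\R^d)}$ uniformly in $i$.

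Assembling these pieces closes the proof: given $\varepsilon>0$, I would first pick $k$ large so that the tail term and $|\intav_B(\nabla u_k-\nabla u)\cdot g|$ are each below $\varepsilon/3$ (uniformly in $i$), and then pick $i$ large so that the smooth piece is within $\varepsilon/3$ of $\intav_B\nabla u_k\cdot g$. This gives the reduced limit, and combined with the $\gamma$-term computed above produces exactly $r^\beta\intav_B\nabla|f|\cdot g+\gamma\,r^\beta\intav_B|f|$. The only genuinely delicate point is the uniform difference-quotient estimate that makes the density argument work for merely $W^{1,1}$ functions; everything else is bookkeeping around the change of variables and the scalar factors $a_i^\beta$.
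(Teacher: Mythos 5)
Your proposal is correct and follows essentially the same route as the paper's (cited) argument: the paper attributes Proposition \ref{afin map 1} to a ``simple change of variables'' as in \cite{LM2017}, which is exactly your reduction via $z=L_i(y)$ to the $L^1(B)$ convergence of the difference quotients of $|f|\in W^{1,1}(\R^d)$, then settled by smooth approximation and the uniform difference-quotient bound. The only (cosmetic) slip is writing $h_i$ instead of $|h_i|$ in the uniform estimates, since the sequence $h_i$ may change sign.
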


The above proposition can be used to obtain the following lemma (the non-centered statement corresponds to \cite[Lemma 2.2]{LM2017}).

\begin{lemma}\label{afin map 2}
Let $f\in W^{1,1}(\R^d)$, $x\in\R^d$, $\delta>0$, and let $L_{h}(y)=a_{h}y+b_{h}$, $h\in[-\delta,\delta]$, be affine mappings such that
$$
\lim_{h\to0}\frac{L_{h}(y)-y}{h}=g(y)
\,\,\text{ and }\,\,
\lim_{h\to 0}\frac{a_{h}^{\beta}-1}{h}=\gamma.
$$
Assume that $B\in \mathcal{B}_{x}^\beta$ and $x= L_{h}(x)$. Then
\begin{equation}\label{eq99}
0=r^{\beta}\intav_{B}\nabla |f|(y)\cdot g(y) \dy+\gamma M_{\beta}f(x),
\end{equation}
where $r$ denotes the radius of $B$. Moreover, if $\widetilde{B} \in \widetilde{\mathcal{B}}_x^\beta$ and $x \in L_h(\overline{ \widetilde{B}})$, \eqref{eq99} holds for $\widetilde{B}$ and $\widetilde{M}_\beta$.
\end{lemma}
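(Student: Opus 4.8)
The plan is to exploit the defining optimality of a good ball: since $B \in \mathcal{B}_x^\beta$ realises the supremum in $M_\beta f(x)$, any admissible perturbation of $B$ can only fail to increase the corresponding average, which forces a first-order stationarity condition that is exactly \eqref{eq99}.

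First I would record that, by the definition \eqref{def:good balls}, the good ball is centered at $x$, say $B = B(x,r)$. Because $L_h(x) = x$ and $a_h \to 1$ (forced by $a_h^\beta \to 1$, so $a_h > 0$ for small $h$), the image $B_h := L_h(B)$ equals $B(x, a_h r)$, which is again a ball centered at $x$ and hence admissible in the supremum defining $M_\beta f(x)$. Writing $r_h := a_h r$ for its radius, the definition of $M_\beta f(x)$ together with $B \in \mathcal{B}_x^\beta$ gives, for all $h \in [-\delta,\delta]$,
\[
r_h^\beta \intav_{B_h} |f(y)| \dy \;\leq\; M_\beta f(x) \;=\; r^\beta \intav_{B} |f(y)| \dy .
\]

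Next I would set $\Phi(h) := r_h^\beta \intav_{B_h} |f(y)| \dy - r^\beta \intav_{B} |f(y)| \dy$, so that $\Phi(h) \leq 0 = \Phi(0)$ on $[-\delta,\delta]$; that is, $0$ is an interior maximum of $\Phi$. Applying Proposition \ref{afin map 1} to the family $\{L_h\}$ (the proposition produces the same limit along every sequence $h_i \to 0$, hence the full two-sided limit exists) shows that $\Phi$ is differentiable at $0$ with
\[
\Phi'(0) = r^\beta \intav_{B} \nabla |f|(y) \cdot g(y) \dy + \gamma\, r^\beta \intav_{B} |f(y)| \dy = r^\beta \intav_{B} \nabla |f|(y) \cdot g(y) \dy + \gamma\, M_\beta f(x),
\]
where the last equality again uses $B \in \mathcal{B}_x^\beta$. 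Since $0$ is an interior extremum and $\Phi$ is differentiable there, the one-sided difference quotients (nonpositive from the right, nonnegative from the left) force $\Phi'(0) = 0$, which is precisely \eqref{eq99}. For the non-centered statement the argument is identical, the only adjustment being the admissibility check: if $\widetilde B = B(z,\rho)$ with $x \in \overline{\widetilde B}$, then $\overline{L_h(\widetilde B)} = L_h(\overline{\widetilde B})$, so the hypothesis $x \in L_h(\overline{\widetilde B})$ guarantees that $L_h(\widetilde B)$ still competes in the supremum defining $\widetilde M_\beta f(x)$, and the same stationarity reasoning applies.

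The main point requiring care, rather than any genuine difficulty, is the legitimacy of concluding $\Phi'(0) = 0$ from an interior maximum: this needs $\Phi$ to be differentiable from both sides at $0$, which is exactly what Proposition \ref{afin map 1} supplies, and it needs the perturbed balls to remain admissible for all small $h$, which is ensured for the centered case by $L_h(x) = x$ keeping the center fixed and for the non-centered case by the hypothesis $x \in L_h(\overline{\widetilde B})$.
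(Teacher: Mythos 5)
Your proof is correct and follows essentially the same route as the paper: both arguments define the perturbation functional $s_h = r_h^\beta \intav_{L_h(B)}|f| - r^\beta \intav_B |f|$, observe that $s_h \le 0$ because $L_h(B)$ remains an admissible competitor (centered at $x$ in the centered case, containing $x$ in the non-centered case), identify $\lim_{h\to 0} s_h/h$ with the right-hand side of \eqref{eq99} via Proposition \ref{afin map 1}, and conclude the limit vanishes since $h$ ranges over both signs. Your additional remarks (two-sided differentiability from the sequential statement of Proposition \ref{afin map 1}, and the explicit admissibility checks) only make explicit what the paper leaves implicit.
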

\begin{proof}
Let $r_h$ denote the radius of $L_h(B)$. By Proposition \ref{afin map 1} the right hand side of (\ref{eq99}) equals to 
\begin{equation*}
\lim_{h\to 0}\frac{1}{h}\left( r_h^{\beta}\intav_{L_h(B)}|f(y)| \dy-r^{\beta}\intav_{B}|f(y)| \dy\right)\,=:\lim_{h\to 0}\frac{1}{h}s_h\,.
\end{equation*}
Since $B\in\mathcal{B}_x^\beta$ and $x$ is also the center of the ball $L_h(B)$ for all $h$, it follows that $s_h\leq 0$ for all $h$. Since $h$ can take positive and negative values, the existing limit must be equal to zero. The corresponding result in the non-centered situation follows similarly.
\end{proof}

This implies as a consequence the following corollary, which will play a crucial rôle in relating $M_\beta$ and $\widetilde{M}_\beta$ at the derivative level.

\begin{corollary}\label{coro good eq}
Let $f\in W^{1,1}(\R^d)$, $x\in\R^d$ and $B=B(x,r) \in \CB_{x}^\beta$. Then
\begin{equation*}\label{nice identity}
r^{\beta}\intav_{B}\nabla |f|(y)\cdot x \dy=r^{\beta}\intav_{B}\nabla |f|(y)\cdot y\dy+\beta M_{\beta}f(x)
\end{equation*}
and the same holds for $\widetilde{B} \in \widetilde{\CB}_x^\beta$ and $\widetilde{M}_\beta$.
\end{corollary}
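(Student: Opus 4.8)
The plan is to deduce the identity directly from Lemma \ref{afin map 2} by feeding it a well-chosen one-parameter family of affine maps. The natural candidate is the dilation centered at the point $x$ itself, since this is the unique affine map (up to scaling factor) that fixes $x$ and hence automatically satisfies the hypothesis $x = L_h(x)$ in the centered case. Concretely, I would set
\[
L_h(y) = x + (1+h)(y-x) = (1+h)\,y - h\,x, \qquad h \in [-\delta,\delta],
\]
so that $a_h = 1+h$ and $b_h = -h\,x$, and indeed $L_h(x) = x$ for every $h$.

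Next I would verify the two limit hypotheses of Lemma \ref{afin map 2}. Since $L_h(y) - y = h\,(y-x)$, we immediately obtain
\[
\lim_{h\to 0}\frac{L_h(y)-y}{h} = y - x =: g(y).
\]
For the scaling factor, $a_h^{\beta} = (1+h)^{\beta}$ differentiated at $h=0$ gives $\lim_{h\to 0}(a_h^{\beta}-1)/h = \beta$, so $\gamma = \beta$. Applying Lemma \ref{afin map 2} with these values of $g$ and $\gamma$ then yields
\[
0 = r^{\beta}\intav_{B}\nabla |f|(y)\cdot (y-x) \dy + \beta\, M_{\beta}f(x).
\]
Expanding the inner product as $\nabla|f|(y)\cdot(y-x) = \nabla|f|(y)\cdot y - \nabla|f|(y)\cdot x$ and rearranging produces exactly the claimed identity.

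For the non-centered statement I would use the very same dilation $L_h$ centered at $x$. By the definition of $\widetilde{\CB}_x^\beta$ we have $x \in \overline{\widetilde{B}}$, and since $L_h$ fixes $x$ it follows that $x = L_h(x) \in L_h(\overline{\widetilde{B}})$; thus the second part of Lemma \ref{afin map 2} applies verbatim with the same $g(y) = y-x$ and $\gamma = \beta$, giving the analogous identity for $\widetilde{M}_\beta$. The argument is essentially immediate once the correct family of maps is identified, and there is no genuine obstacle beyond recognizing that a dilation about $x$ does two things at once: fixing $x$ produces the vector field $g(y) = y-x$ that separates the $\nabla|f|\cdot y$ and $\nabla|f|\cdot x$ contributions, while scaling radii by $1+h$ produces $\gamma = \beta$, which supplies the $\beta M_\beta f(x)$ term.
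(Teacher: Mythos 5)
Your proof is correct and coincides with the paper's own argument: the dilation $L_h(y)=x+(1+h)(y-x)=y+h(y-x)$ is exactly the family of affine maps used in the paper, with the same $g(y)=y-x$ and $\gamma=\beta$, and the same observation that $L_h$ fixes $x$ handles both the centered and non-centered cases via Lemma \ref{afin map 2}.
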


\begin{proof}
Given $\delta>0$, define $L_{h}(y)=y+h(y-x)$ for $h \in [-\delta,\delta]$. The desired identity follows from Lemma \ref{afin map 2} after noting that $g(y)=y-x$, $\gamma=\beta$ and that if $B \in \mathcal{B}_x^\beta$, then $x=L_h(x)$ is the center of $L_h(B)$ and if $\widetilde{B} \in \widetilde{\CB}_x^\beta$, then $x=L_h(x) \in L_h(\overline{\widetilde{B}})$.
\end{proof}

Our next goal is to show that $M_\beta$ is differentiable almost everywhere. To this end, it is useful to introduce the following object. Given $\varepsilon>0$, define the truncated fractional maximal function as
$$
M_\beta^\varepsilon f (x):= \sup_{r \geq \varepsilon} r^\beta \intav_{B(x,r)} |f|.
$$

The following observation follows from  a straightforward adaption to the fractional setting of the arguments in Hajlasz--Maly \cite[Lemma 8]{HM2010}.

\begin{lemma}\label{lemma:M truncated}
Let $0 \leq \beta < d$ and $\varepsilon>0$. If $f \in L^1(\R^d)$,
$$
|M_\beta^\varepsilon(x)-M_\beta^\varepsilon(y)| \leq \frac{d-\beta}{\varepsilon} |x-y| \big( M_\beta^\varepsilon f(x) + M_\beta^\varepsilon f(y) \big) \leq \frac{2(d-\beta)}{\omega_d \varepsilon^{d+1-\beta}} \| f \|_{1} |x-y|.
$$
Consequently, $M_\beta^\varepsilon$ is Lipschitz continuous, and in particular differentiable almost everywhere.
\end{lemma}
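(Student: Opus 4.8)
The plan is to prove the first (pointwise, ball-by-ball) inequality directly by comparing averages over concentric balls of slightly different radii, and then to deduce the second inequality from the trivial $L^1$ bound. The key preliminary observation is that the supremum defining $M_\beta^\varepsilon f(z)$ is attained: for fixed $z$, the function $r \mapsto r^\beta \intav_{B(z,r)}|f| = \frac{1}{\omega_d r^{d-\beta}}\int_{B(z,r)}|f|$ is continuous on $[\varepsilon,\infty)$, and since its numerator is bounded by $\|f\|_1$ and $d-\beta>0$, it tends to $0$ as $r\to\infty$; hence the supremum over $r\ge\varepsilon$ is achieved at some finite radius $r_0\ge\varepsilon$. (Alternatively one works with $\eta$-optimal radii, but attainment keeps the constants clean.)

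For the main inequality, I would argue symmetrically: assume without loss of generality that $M_\beta^\varepsilon f(x)\ge M_\beta^\varepsilon f(y)$ and write $h=|x-y|$. Choosing $r\ge\varepsilon$ attaining the supremum at $x$, the inclusion $B(x,r)\subset B(y,r+h)$ yields, upon comparing the corresponding averages,
\begin{equation*}
M_\beta^\varepsilon f(y)\ \ge\ (r+h)^\beta\intav_{B(y,r+h)}|f|\ \ge\ \Big(\tfrac{r}{r+h}\Big)^{d-\beta}\,r^\beta\intav_{B(x,r)}|f|\ =\ \Big(1+\tfrac{h}{r}\Big)^{-(d-\beta)}M_\beta^\varepsilon f(x),
\end{equation*}
so that $M_\beta^\varepsilon f(x)-M_\beta^\varepsilon f(y)\le \big[1-(1+h/r)^{-(d-\beta)}\big]\,M_\beta^\varepsilon f(x)$. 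The elementary inequality $1-(1+t)^{-a}\le at$ for $t\ge0$, $a>0$ (immediate from $\phi(0)=0$ and $\phi'(t)=a(1+t)^{-a-1}\le a$ with $\phi(t)=1-(1+t)^{-a}$), applied with $a=d-\beta$ and $t=h/r\le h/\varepsilon$, then gives $M_\beta^\varepsilon f(x)-M_\beta^\varepsilon f(y)\le \frac{d-\beta}{\varepsilon}|x-y|\,M_\beta^\varepsilon f(x)$. Enlarging the right-hand side by the nonnegative term $M_\beta^\varepsilon f(y)$ and then interchanging the roles of $x$ and $y$ recovers the symmetric absolute-value bound in the statement.

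Finally, the second inequality follows from the crude estimate $M_\beta^\varepsilon f(z)=\sup_{r\ge\varepsilon}\frac{1}{\omega_d r^{d-\beta}}\int_{B(z,r)}|f|\le \frac{\|f\|_1}{\omega_d\,\varepsilon^{d-\beta}}$, valid since $r^{d-\beta}\ge\varepsilon^{d-\beta}$ when $d-\beta>0$; summing over $x$ and $y$ produces the factor $\frac{2(d-\beta)}{\omega_d\,\varepsilon^{d+1-\beta}}\|f\|_1$. Lipschitz continuity of $M_\beta^\varepsilon f$ with constant depending only on $\varepsilon$, $d$, $\beta$ and $\|f\|_1$ is then immediate, and almost everywhere differentiability follows from Rademacher's theorem. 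I do not expect a genuine obstacle here—the statement is a soft adaptation of Hajłasz--Malý—the only points requiring care are the attainment of the supremum and extracting the sharp constant $(d-\beta)/\varepsilon$ from the monotonicity inequality rather than a wasteful bound; the structurally convenient choice is the enlargement $B(x,r)\subset B(y,r+h)$, which avoids having to estimate $\int |f|$ over a thin symmetric difference.
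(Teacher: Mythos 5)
Your proof is correct, and it is exactly the ``straightforward adaptation to the fractional setting'' of Haj\l{}asz--Mal\'y's Lemma 8 that the paper invokes without writing out: the ball inclusion $B(x,r)\subset B(y,r+|x-y|)$, the resulting comparison $M_\beta^\varepsilon f(y)\geq (1+|x-y|/r)^{-(d-\beta)}M_\beta^\varepsilon f(x)$, and the elementary bound $1-(1+t)^{-a}\leq at$ yield precisely the stated constant $(d-\beta)/\varepsilon$, with the second inequality following from the trivial bound $M_\beta^\varepsilon f\leq \|f\|_1/(\omega_d\varepsilon^{d-\beta})$. No gaps; the attainment of the supremum is a harmless convenience (near-optimal radii would do equally well), and Rademacher's theorem finishes the a.e.\ differentiability claim as intended.
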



The following lemma is essentially contained in Kinnunen \cite{Kinnunen1997}; details are provided for the reader's convenience.

\begin{lemma}\label{lemma:M continuous}
Let $0 < \beta < d$. If $f\in L^{1}(\R^d)$ is a continuous function, then $M_{\beta}f$ is also continuous.
\end{lemma}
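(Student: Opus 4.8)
The plan is to establish continuity by showing that $M_\beta f$ is simultaneously lower and upper semicontinuous. Lower semicontinuity is the easy half and requires only $f \in L^1(\R^d)$: for each fixed $r>0$ the map $x \mapsto r^\beta \intav_{B(x,r)}|f|$ is continuous, since writing $\int_{B(x,r)}|f| = \int_{\R^d}|f(x-z)|\chi_{B(0,r)}(z)\dz$ reduces this to the continuity of translations in $L^1(\R^d)$. As $M_\beta f$ is the supremum over $r>0$ of this family of continuous functions, it is lower semicontinuous.

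The substance lies in upper semicontinuity. I would fix $x$, a sequence $x_n \to x$, and good radii $r_n \in \mathcal{R}_{x_n}^\beta$, which exist because $\CB_{x_n}^\beta(f) \neq \emptyset$, so that $M_\beta f(x_n)=r_n^\beta\intav_{B(x_n,r_n)}|f|$. Passing to a subsequence I may assume $M_\beta f(x_n)\to L:=\limsup_n M_\beta f(x_n)$, and the goal is $L\leq M_\beta f(x)$; if $L=0$ this is trivial, so assume $L>0$. The first step is a uniform upper bound on the radii: from $r^\beta \intav_{B(z,r)}|f|\leq \omega_d^{-1}\|f\|_1\, r^{\beta-d}$ and $\beta<d$, the quantity $r^\beta\intav_{B(z,r)}|f|$ drops below $L/2$ once $r$ exceeds some $R=R(\|f\|_1,d,\beta,L)$; since $M_\beta f(x_n)\geq L/2$ eventually, this forces $r_n\leq R$. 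Passing to a further subsequence, $r_n\to r_*\in[0,R]$.

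I would then split into two cases. If $r_*>0$, I use the joint continuity of $(z,r)\mapsto r^\beta\intav_{B(z,r)}|f|$ at $(x,r_*)$: the integrals converge because $|B(x_n,r_n)\triangle B(x,r_*)|\to 0$ and the absolute continuity of the Lebesgue integral gives $\int_{B(x_n,r_n)\triangle B(x,r_*)}|f|\to 0$. Hence $L=r_*^\beta\intav_{B(x,r_*)}|f|\leq M_\beta f(x)$. If instead $r_*=0$, I invoke that the continuous function $f$ is bounded, say by $K$, on a fixed neighborhood of $x$; then $\intav_{B(x_n,r_n)}|f|\leq K$ for large $n$, so $L=\lim_n r_n^\beta\intav_{B(x_n,r_n)}|f|\leq \lim_n r_n^\beta K=0$, contradicting $L>0$. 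In either case $L\leq M_\beta f(x)$, which together with lower semicontinuity yields continuity.

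The main obstacle is precisely the degenerate small-radius case $r_*=0$, and this is where the continuity hypothesis on $f$ is indispensable: for a merely integrable $f$ the averages $\intav_{B(x_n,r_n)}|f|$ can blow up as $r_n\to 0$, so $r_n^\beta\intav_{B(x_n,r_n)}|f|$ need not vanish and the argument collapses. The local boundedness supplied by continuity, combined with $\beta>0$ forcing $r_n^\beta\to 0$, is exactly what rescues the estimate, whereas the large-radius control comes for free from the $L^1$ decay and the constraint $\beta<d$.
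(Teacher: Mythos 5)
Your proof is correct, and it takes a genuinely different route from the paper's. The paper argues via sublinearity: setting $f_h(\cdot)=f(\cdot+h)$, it bounds $|M_\beta f(x)-M_\beta f(x+h)|\leq M_\beta(f-f_h)(x)$ and shows this vanishes as $h\to 0$ by splitting radii — for $r>r_\epsilon$ the crude bound $r^\beta\intav_{B(x,r)}|f-f_h|\leq 2\|f\|_1/(\omega_d r^{d-\beta})$ suffices (your large-radius mechanism), while for $r\leq r_\epsilon$ it uses H\"older's inequality to interpolate between $\|f-f_h\|_1$ and $\sup_{B(x,r_\epsilon)}|f-f_h|$, the latter being small by local uniform continuity of $f$. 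You instead split continuity into lower semicontinuity (a supremum of continuous functions, needing only $f\in L^1$) and upper semicontinuity via a compactness analysis of good radii along $x_n\to x$: the $L^1$ decay caps the radii, the case $r_*>0$ is handled by joint continuity of $(z,r)\mapsto r^\beta\intav_{B(z,r)}|f|$, and the degenerate case $r_*=0$ is excluded by local boundedness of $f$ together with $\beta>0$. Each approach buys something. Yours is more economical with the hypothesis: continuity of $f$ enters only through local boundedness, so your argument in fact proves the stronger statement that $M_\beta f$ is continuous for every locally bounded $f\in L^1(\R^d)$ when $0<\beta<d$, and it cleanly isolates where $\beta>0$ is indispensable; the paper's proof genuinely needs continuity, since it requires $\sup_{B(x,r_\epsilon)}|f-f_h|\to 0$ as $h\to 0$. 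On the other hand, the paper's proof is quantitative — it yields an explicit modulus of continuity for $M_\beta f$ in terms of $\|f\|_1$ and the local modulus of continuity of $f$ — whereas your compactness-and-subsequence argument is soft and gives no rate. Both ultimately exploit the same two features of the fractional weight: the decay $r^{\beta-d}$ controls large radii, and the factor $r^\beta$ kills small ones.
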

\begin{proof}
Given $h \in \R^d$, let $f_h(x):=f(x+h)$. Fix $x \in \R^d$. Given $\epsilon>0$, there exists $r_{\epsilon}>0$ such that
\begin{eqnarray*}
r^{\beta}\intav_{B(x,r)}|f-f_h|\leq \frac{\|f-f_h\|_1}{\omega_d \,  r^{d-\beta}}\leq \frac{2\|f\|_1}{\omega_d \, r^{d-\beta}}<\epsilon
\end{eqnarray*}
for every $r>r_\epsilon$. On the other hand, if $0<r\leq r_{\epsilon}$, there exists $\delta>0$ such that
\begin{align*}
r^{\beta}\intav_{B}|f-f_h| & \,\leq \, \frac{1}{\omega_d} \|f_h-f\|_{L^{q'}(B)} \\ 
&\leq \, \frac{1}{\omega_d} \|f_h-f\|^{1/q'}_{1}\sup_{y\in B(x,r_\epsilon)}|f(y)-f_h(y)|^{(q'-1)/q'}\\
&\leq \, \frac{1}{\omega_d} 2^{1/q'}\|f\|^{1/q'}_1 \epsilon^{(q'-1)/q'} 
\end{align*}
for every $|h|<\delta$, where $q=d/(d-\beta)$. Then, by sublinearity,
$|M_{\beta}f(x)-(M_{\beta}f)_h(x)|\leq M_{\beta}(f-f_h)(x)\leq \widetilde \epsilon$ for every $|h|<\delta$. Therefore $M_{\beta}f$ is continuous at $x$.
\end{proof}

These two lemmas can be combined to show the a.e. differentiability if $d=1$.

\begin{lemma}\label{lemma:derivative d=1}
Let $d=1$ and $0 < \beta < 1$. For any $f\in W^{1,1}(\R)$, $M_{\beta}f$ is differentiable almost everywhere. Moreover, if $M_\beta f$ is differentiable at $x$ and $B=B(x,r) \in \CB_x^\beta$, one has
\begin{equation}\label{eq:derivative d=1}
(M_\beta f)'(x)= r^\beta \intav_B |f|'(y)  \dy.
\end{equation}
\end{lemma}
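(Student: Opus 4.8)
The plan is to prove Lemma \ref{lemma:derivative d=1} in two parts: first the almost everywhere differentiability, then the formula \eqref{eq:derivative d=1} for the derivative at points of differentiability.

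For the differentiability, I would exploit the truncation $M_\beta^\eps$ together with the fact that in one dimension the good radii cannot degenerate. The key heuristic is that for $f \in W^{1,1}(\R)$ (hence bounded, since $W^{1,1}(\R) \hookrightarrow L^\infty(\R)$ and indeed $f \to 0$ at infinity), the quantity $r^\beta \intav_{B(x,r)}|f| \sim r^{\beta-1}\|f\|_1$ for large $r$, which tends to $0$ as $r \to \infty$ because $\beta < 1$. Combined with $0 \notin \CR_x^\beta$, this should force the good radii to lie in a fixed compact interval $[\eps_0, R_0] \subset (0,\infty)$ that can be taken locally uniform in $x$. On such a set, $M_\beta f$ coincides locally with a truncated version $M_\beta^{\eps_0} f$, which is Lipschitz by Lemma \ref{lemma:M truncated} and hence differentiable almost everywhere by Rademacher's theorem (trivial in dimension one). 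I would first establish this for continuous $f$ using Lemma \ref{lemma:M continuous}, and then argue that a general $f \in W^{1,1}(\R)$ can be handled because $W^{1,1}(\R)$ functions have continuous representatives. So the upper and lower bounds on good radii are the crux, and they should follow from the growth/decay estimates just described.

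For the derivative formula, I would fix a point $x$ of differentiability and a good ball $B = B(x,r) \in \CB_x^\beta$, and compute the one-sided difference quotients of $M_\beta f$ by comparing with the fixed ball $B$ translated. Concretely, for the competing value at $x+h$ one uses the ball $B(x+h, r)$ (same radius, translated center), which gives the lower bound
\begin{equation*}
M_\beta f(x+h) \geq r^\beta \intav_{B(x+h,r)} |f|,
\end{equation*}
while $M_\beta f(x) = r^\beta \intav_{B(x,r)}|f|$ by optimality. Subtracting and dividing by $h$, the right-hand difference quotient of the translated-ball average converges, by the absolute continuity of $|f|$ (valid since $|f| \in W^{1,1}(\R)$) and a change of variables, to $r^\beta \intav_B |f|'(y)\dy$. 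This yields the two inequalities $(M_\beta f)'(x) \geq r^\beta \intav_B |f|'$ and $(M_\beta f)'(x) \leq r^\beta \intav_B |f|'$ upon taking $h \to 0^+$ and $h \to 0^-$ respectively (the sign of $h$ flipping the inequality direction), so that the derivative equals the claimed expression. This is exactly the one-dimensional specialisation of the affine-map philosophy encoded in Proposition \ref{afin map 1} and Lemma \ref{afin map 2}, applied now with a pure translation $L_h(y) = y + h$ rather than a dilation.

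The main obstacle I anticipate is the uniform control of the good radii needed for the differentiability statement, specifically ruling out that the optimal radius escapes to $0$ or $\infty$ in a neighbourhood of a given point. The decay at infinity is clean from $\beta < 1$ and $f \in L^1$, but the lower bound requires more care: a priori the good radius could shrink to zero along some sequence $x_n \to x$, which would prevent any fixed truncation from capturing $M_\beta f$ near $x$. I would address this by using the continuity of $M_\beta f$ (Lemma \ref{lemma:M continuous}), which keeps $M_\beta f$ bounded below locally, together with the fact that small radii give contributions $r^\beta \intav_{B(x,r)}|f| \leq r^\beta \|f\|_\infty$ that are uniformly small — so a strictly positive lower bound on the good radii holds locally uniformly. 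Once the good radii are trapped in a compact subinterval of $(0,\infty)$, the reduction to the Lipschitz function $M_\beta^{\eps_0}f$ is routine, and the a.e. differentiability follows.
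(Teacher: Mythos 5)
Your proposal is correct and follows essentially the same route as the paper: the a.e.\ differentiability is obtained exactly as in the paper's proof, via continuity of $M_\beta f$ (Lemma \ref{lemma:M continuous}) giving a local positive lower bound on good radii through $r_y^\beta\|f\|_\infty \geq M_\beta f(y) \geq C_n$, so that $M_\beta f$ locally coincides with the Lipschitz function $M_\beta^{\varepsilon_n}f$ of Lemma \ref{lemma:M truncated}; and the derivative formula is proved by the same translated-ball comparison, sandwiching the difference quotients from both sides at a point of differentiability. The only superfluous element is your upper bound $R_0$ on the good radii, which is never needed since the truncation $M_\beta^{\varepsilon_0}$ already allows all radii $r \geq \varepsilon_0$.
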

\begin{proof}
As $f \in W^{1,1}(\R)$, the function $f$ is continuous, and by Lemma \ref{lemma:M continuous} $M_\beta f$ is also continuous. Let $I_{n}=[n,n+1]$ and write $\R = \cup_{n \in \Z} I_n$. For each fixed $n \in \Z$, the continuity of $M_\beta f$ ensures that there exists $C_n>0$ such that $M_{\beta}f (y) \geq C_n > 0$ for all $y \in I_n$. In particular, one has
\begin{eqnarray*}
r^\beta_{y}\|f\|_{\infty}\geq M_{\beta}f(y)\geq C_n
\end{eqnarray*}
for all $y\in I_n$, where $r_{y}$ is a good radius for $y$. This implies that $M_{\beta}f(y)=M^{\varepsilon_n}_{\beta}f(y)$ for all $y\in I_n$, where $\varepsilon_n=\left( C_n/\|f\|_{\infty}\right)^{1/\beta}$. Then, by Lemma \ref{lemma:M truncated}
one has that $M_\beta f$ is differentiable almost everywhere in $I_n$. As the countable union of sets of measure zero is a set of measure zero, one concludes that $M_\beta f$ is differentiable almost everywhere in $\R$. 

For the second part, let $B=B(x,r)\in\mathcal{B}_x^\beta\,$ and  $B_{h}:=B(x+h,r)$. It then follows that 
\begin{eqnarray*}
\lim_{h\to 0} \frac{M_{\beta}f(x+h)-M_{\beta}f(x)}{h}
&\geq& \lim_{h\to 0}\frac{r^{\beta}\intav_{B_{h}}|f(y)| \dy-r^{\beta}\intav_{B}|f(y)|\dy}{h}\\
&=&r^{\beta}\intav_{B}|f|'(y) \dy\\
&=& \lim_{h\to 0}\frac{r^{\beta}\intav_{B}|f(y)| \dy-r^{\beta}\intav_{B_{-h}}|f(y)|\dy}{h}\\
&\geq&  \lim_{h\to 0} \frac{M_{\beta}f(x)-M_{\beta}f(x-h)}{h}\,,
\end{eqnarray*}
as desired.
\end{proof}

The $\text{a.e.}$ differentiability and the representation \eqref{eq:derivative d=1} will play an important rôle in establishing the endpoint Sobolev bound \eqref{eq:endpoint sobolev d=1}. This is in contrast with the analogous result for $\widetilde{M}_\beta$ in \cite{CM2015}, in which the bound $\| (\widetilde{M}_\beta f)' \|_q \lesssim \| f' \|_1$ was obtained without using the version of \eqref{eq:derivative d=1} for $\widetilde{M}_\beta$. In fact, a slightly stronger result concerning the $q$-variation of $\widetilde{M}_\beta f$ was proved in \cite{CM2015}, which via a classical result of Riesz allowed to deduce that $\widetilde{M}_\beta f$ is absolutely continuous if $f \in W^{1,1}(\R)$.

If $d>1$, it is possible to adapt the proof of Lemma \ref{lemma:derivative d=1} if the function $f$ is radial. To this end, define the auxiliary operator
\begin{equation*}\label{eq:MI}
    M^{I}_\beta f(x) :=  \sup_{r\leq |x|/4}
 r^\beta \intav_{B(x,r)}|f(y)|\dy.
\end{equation*}
This operator was introduced by Luiro \cite{Luiro2017} in the non-centered $\beta=0$ case, and will essentially behave as a one-dimensional maximal function when acting over radial functions; see the end of Section \ref{sec:relation} for further discussion.

\begin{lemma}\label{basic}
Let $d>1$ and $0 < \beta < 1$. For any radial function $f \in W^{1,1}(\R^d)$, $M_\beta f$ is differentiable almost everywhere. 

Moreover, let $x \in \R^d$ and assume $M_{\beta}f$ and $\widetilde M_{\beta}f$ are differentiable at $x$.
\begin{enumerate}[i)]
\item 
If $B=B(x,r) \in \CB_x^\beta$, then 
\begin{equation}\label{Luiro formula}
\nabla M_{\beta}f(x)=r^{\beta}\intav_{B}\nabla|f|(y)\dy\,,
\end{equation}
and the same holds for $\widetilde{M}_\beta$ and $\widetilde{B} \in \widetilde{\CB}_x^\beta$.
\item If $B\in\mathcal{B}_x^\beta$, then 
\begin{equation}\label{char}
\int_{B}|f(y)| \dy\,=\,-\frac{1}{\beta}\int_{B}\nabla |f|(y)\cdot (y-x)\dy\,,
\end{equation}
and the same holds for $\widetilde{B} \in \widetilde{\CB}_x^\beta$.
\item 
If $f$ is radial, $x\neq0$ and $\nabla M_{\beta}f(x)\neq0$, then
\begin{equation*}
\frac{\nabla M_{\beta}f(x)}{|\nabla M_{\beta}f(x)|}=\frac{\pm x}{|x|}\,.
\end{equation*}
\item  If $\nabla \widetilde M_{\beta}f(x)\neq0$, then $x\in\partial \widetilde B$ for any $\widetilde B= B(z,r)\in \mathcal{\widetilde B}_x^\beta$ and
$$
\frac{\nabla \widetilde{M}_\beta f (x)}{|\nabla \widetilde{M}_\beta f (x)|} = \frac{z-x}{|z-x|}.
$$
\end{enumerate}
\end{lemma}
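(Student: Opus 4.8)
The plan is to prove the four assertions of Lemma~\ref{basic} by leveraging the affine-map machinery of Lemma~\ref{afin map 2} and Corollary~\ref{coro good eq}, specialising the choice of the family $L_h$ to the geometric situation of each item. First I would handle the almost-everywhere differentiability. For radial $f \in W^{1,1}(\R^d)$ I would mimic the proof of Lemma~\ref{lemma:derivative d=1}: by Lemma~\ref{lemma:M continuous}, $M_\beta f$ is continuous (after noting a radial $W^{1,1}$ function has a continuous representative away from the origin), so on any annulus $A_n = \{n \leq |x| \leq n+1\}$ bounded away from $0$ the value $M_\beta f$ is bounded below by some $C_n>0$; the bound $r_y^\beta \|f\|_\infty \geq M_\beta f(y) \geq C_n$ forces the good radii to be bounded below, whence $M_\beta f = M_\beta^{\varepsilon_n} f$ on $A_n$, and Lemma~\ref{lemma:M truncated} gives Lipschitz continuity, hence a.e.\ differentiability, on each $A_n$. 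Covering $\R^d \setminus \{0\}$ by countably many such annuli (and discarding the null set $\{0\}$) yields a.e.\ differentiability.

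For item (i), the formula \eqref{Luiro formula} is the analogue of Kinnunen's pointwise gradient bound realised as an identity on good balls. I would fix a coordinate direction $e_j$ and apply Lemma~\ref{afin map 2} with the pure translations $L_h(y) = y + h e_j$, so that $g(y) = e_j$ and $a_h = 1$, giving $\gamma = 0$; comparing the difference quotient of $M_\beta f$ at $x$ against the frozen good ball $B(x+he_j, r)$ from both sides (exactly as in the second half of Lemma~\ref{lemma:derivative d=1}) pins the partial derivative $\partial_j M_\beta f(x)$ to $r^\beta \intav_B \partial_j |f|(y)\dy$, and assembling the components gives \eqref{Luiro formula}. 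The non-centered version follows identically since translating a ball containing $x$ keeps $x$ in the closed translated ball. Item (ii) is a direct restatement of Corollary~\ref{coro good eq}: expanding $\nabla|f|(y)\cdot x - \nabla|f|(y)\cdot y = -\nabla|f|(y)\cdot(y-x)$ and using $M_\beta f(x) = r^\beta \intav_B |f|$ converts the corollary's identity into \eqref{char} after multiplying through by $|B| r^{-\beta}$ and rearranging.

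For item (iii), I would use the radial symmetry: since $f$ is radial and $B = B(x,r)$ is centered at $x$, the integrand $\nabla|f|(y)\dy$ in \eqref{Luiro formula} is, after the change of variables centering at $x$, symmetric under the subgroup of rotations fixing the axis through $0$ and $x$; any component of $r^\beta\intav_B \nabla|f|$ orthogonal to $x$ integrates to zero by this symmetry, forcing $\nabla M_\beta f(x)$ to be parallel to $x$, i.e.\ $\nabla M_\beta f(x)/|\nabla M_\beta f(x)| = \pm x/|x|$ whenever it is nonzero. For item (iv), the boundary claim $x \in \partial \widetilde B$ is the key structural fact distinguishing the non-centered case: if $x$ were interior to an optimal ball $\widetilde B = B(z,r)$, one could slightly enlarge $\widetilde B$ keeping $x \in \overline{\widetilde B}$, and since $r^\beta \intav |f|$ is comparable to a strictly increasing function of $r$ at fixed average behaviour this would contradict optimality at the derivative level — more precisely, one applies Lemma~\ref{afin map 2} with dilations about $z$ to show the derivative in the radial-from-$z$ direction forces $x$ onto the boundary. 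Then applying \eqref{Luiro formula} (non-centered version) together with the fact that for a ball $B(z,r)$ with $x \in \partial B(z,r)$ the optimal perturbation direction is translation of the center, one identifies $\nabla \widetilde M_\beta f(x)$ as parallel to $z - x$, giving the stated formula.

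The main obstacle I anticipate is item (iv), specifically rigorously establishing that $x$ must lie on the boundary $\partial \widetilde B$ rather than merely establishing the direction of the gradient. The subtlety is that Lemma~\ref{afin map 2} only yields a \emph{necessary} first-order condition along the admitted one-parameter families, and one must choose the family carefully so that the constraint $x \in L_h(\overline{\widetilde B})$ is respected \emph{while} the perturbation is genuinely two-sided; if $x$ were interior, both expanding and contracting dilations about $z$ keep $x \in \overline{L_h(\widetilde B)}$, and the sign analysis of $s_h$ then forces a contradiction with the strict positivity coming from $0 \notin \widetilde{\CR}_x^\beta$. Making this argument airtight — and correctly handling the degenerate possibility that the gradient vanishes — is where the care is needed; the direction formula itself then follows cleanly from the translation computation in item (i) once boundary membership is secured.
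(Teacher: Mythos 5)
Items (ii) and (iii) of your proposal are correct and coincide with the paper's proof (item (ii) is Corollary \ref{coro good eq} rearranged; item (iii) is the radiality observation), and for item (i) your substantive argument --- the two-sided comparison of difference quotients against the frozen translated good ball, exactly as in Lemma \ref{lemma:derivative d=1} --- is also the paper's; just note that your preliminary appeal to Lemma \ref{afin map 2} with pure translations is inadmissible, since that lemma requires $x=L_h(x)$, and what you actually use is Proposition \ref{afin map 1}. The first genuine gap is in the almost-everywhere differentiability. Your argument runs through Lemma \ref{lemma:M continuous} and the bound $r_y^\beta\|f\|_\infty\geq M_\beta f(y)\geq C_n$, and both steps need $f$ to be globally continuous and bounded. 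For $d>1$ a radial $W^{1,1}(\R^d)$ function need be neither: $f(x)=|x|^{-\alpha}\psi(x)$ with $0<\alpha<d-1$ and $\psi$ a smooth cutoff is radial, lies in $W^{1,1}(\R^d)$, and blows up at the origin. For $x$ in an annulus, a good ball of radius $\gtrsim |x|$ can reach the origin, so with $\|f\|_\infty=\infty$ your lower bound on good radii says nothing, and Lemma \ref{lemma:M continuous} cannot be invoked. The missing idea --- and this is precisely how the paper proceeds --- is a dichotomy on the size of the good radius: split each dyadic annulus $A_n$ into $I_n=\{x\in A_n : M_\beta f(x)=M_\beta^I f(x)\}$ and $I_n^c$. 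On $I_n$ the maximal function only sees $f\chi_{\frac{5}{4}A_n}$, which \emph{is} bounded and continuous (by radiality, away from the origin), so your one-dimensional scheme applies with this local sup in place of $\|f\|_\infty$; on $I_n^c$ every good radius is $\geq |x|/4\geq 2^{n-2}$, so there $M_\beta f=M_\beta^{2^{n-2}}f$ and Lemma \ref{lemma:M truncated} applies directly.

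The second gap is item (iv), where the obstacle you flag is in fact fatal to the route you chose. If $x$ is interior to $\widetilde B=B(z,\widetilde r)$, two-sided dilations about $z$ give, via Lemma \ref{afin map 2}, the identity $0=\widetilde r^{\beta}\intav_{\widetilde B}\nabla|f|(y)\cdot(y-z)\dy+\beta\widetilde M_\beta f(x)$; combined with Corollary \ref{coro good eq} this yields only the orthogonality $\nabla\widetilde M_\beta f(x)\cdot(x-z)=0$, and there is no contradiction with $0\notin\widetilde{\CR}_x^\beta$: by Proposition \ref{prop3} the identity merely says $\intav_{\partial\widetilde B}|f|=(1-\beta/d)\intav_{\widetilde B}|f|$, a perfectly consistent first-order condition. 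The correct argument --- this is \cite[Lemma 2.4]{LM2017}, which the paper cites rather than reproves --- uses translations, not dilations: if $x$ were interior, then for every $v\in\R^d$ and both signs of small $h$ the maps $L_h(y)=y+hv$ keep $x\in L_h(\overline{\widetilde B})$, so the two-sided sign analysis gives $\intav_{\widetilde B}\nabla|f|(y)\cdot v\dy=0$ for all $v$, whence $\nabla\widetilde M_\beta f(x)=\widetilde r^{\beta}\intav_{\widetilde B}\nabla|f|(y)\dy=0$ by the non-centered version of \eqref{Luiro formula}, contradicting the hypothesis. Once $x\in\partial\widetilde B$ is secured, the direction also comes from one-sided translations: for every $v$ with $v\cdot(x-z)>0$ the ball translated by $hv$, $h>0$ small, still contains $x$, so maximality gives $\nabla\widetilde M_\beta f(x)\cdot v\leq 0$ for all such $v$, forcing $\nabla\widetilde M_\beta f(x)$ to be a positive multiple of $z-x$.
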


\begin{proof}
We first prove that $M_\beta f$ is $\text{a.e.}$ differentiable. To this end, let $\R^d \backslash \{0\}=\cup_{n \in \Z} A_n$, where $A_n:=\{ x \in \R^d : 2^{n} \leq |x| \leq 2^{n+1}\}$. It suffices to show that $M_\beta f$ is a.e. differentiable on each $A_n$. For each $n \in \Z$, one may write $A_n= I_n \cup I_n^c$, where $I_n:=\{x \in A_n : M_\beta f (x) = M_\beta^I f(x)\}$. On $I_n$, by definition of $M_\beta^I$, one has $M_\beta f (x) = M_\beta^I f(x)=M_\beta^I (f\chi_{5/4 A_n})(x)$. As $f \in W^{1,1}(\R^d)$ is radial, $f$ is continuous on any annulus and therefore $M_\beta^I$ is continuous on $A_n$. One can then argue as in the proof of Lemma \ref{lemma:derivative d=1} to show that $M_\beta f$ is $\text{a.e.}$ differentiable on $I_n$. On $I_n^c$, one has $M_\beta f (x) = M_\beta^{2^{n-2}} f (x)$, which by Lemma \ref{lemma:M truncated} is $\text{a.e.}$ differentiable on $I_n^c$, concluding the argument.

Assume next that $M_\beta f (x)$ is differentiable at $x$. Item i) follows analogously to Lemma \ref{lemma:derivative d=1}, whilst item ii) is an immediate consequence of Corollary \ref{coro good eq}. Item iii) just follows from the radiality assumption on $f$. Finally, item iv) was established in \cite[Lemma 2.4]{LM2017}.
\end{proof}


\section{The case $d=1$}\label{sec:d=1}

In this section, the proof of Theorem \ref{thm:d=1} is provided. It is noted that we have already established the $\text{a.e.}$ differentiability of $M_\beta f$ in Lemma \ref{lemma:derivative d=1}. Moreover, the continuity result in the statement follows from \cite[Theorem 1.3]{BM2019} once the inequality \eqref{eq:endpoint sobolev d=1} is established.

We turn then into establishing \eqref{eq:endpoint sobolev d=1}. A key observation is the following.

\begin{proposition}\label{prop comparable radii}
Let $0 < \beta < d$ and $f\in L^{1}_{loc}(\R^d)$. Let $x_1, x_2 \in \R^d$ be such that there exist $B_1:=B(x_1,r_1) \in \CB_{x_1}^\beta$ and $B_2:=B(x_2,r_2) \in \CB_{x_2}^\beta$ with $B_1 \cap B_2 \neq \emptyset$ and $C^{-1}|f|_{B_1}\leq |f|_{B_2}\leq C|f|_{B_1}$ for some finite constant $C>0$. Then
\begin{equation*}
1 \leq \frac{\max{\{r_1,r_2\}}}{\min{\{r_1,r_2\}}} \leq C^{\frac{1}{\beta}} 3^{\frac{d-\beta}{d}}.
\end{equation*}
\end{proposition}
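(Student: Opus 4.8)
The plan is to play the optimality of the smaller good ball against a concentric enlargement that engulfs the larger one. The lower bound $1\le\max\{r_1,r_2\}/\min\{r_1,r_2\}$ is immediate, and since the hypotheses are symmetric under interchanging the indices, I may assume $r_1\le r_2$ and bound $r_2/r_1$. The only structural input needed is that $B_1=B(x_1,r_1)$ is a good ball: the value $M_\beta f(x_1)=r_1^\beta|f|_{B_1}$ dominates every concentric competitor, so that for all $\rho>0$,
\begin{equation*}
r_1^\beta|f|_{B_1}\ \ge\ \rho^\beta|f|_{B(x_1,\rho)}.
\end{equation*}

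The geometric heart of the argument is to select $\rho$ so that $B(x_1,\rho)\supseteq B_2$. Since $B_1\cap B_2\ne\emptyset$, the triangle inequality yields $|x_1-x_2|<r_1+r_2$, whence the choice $\rho:=|x_1-x_2|+r_2$ both guarantees $B_2\subseteq B(x_1,\rho)$ and, invoking $r_1\le r_2$, provides the uniform control $\rho<r_1+2r_2\le 3r_2$. Using positivity of $|f|$ and the containment to discard the mass on $B(x_1,\rho)\setminus B_2$, and then converting integrals to averages,
\begin{equation*}
\rho^\beta|f|_{B(x_1,\rho)}=\frac{\rho^\beta}{\omega_d\rho^d}\int_{B(x_1,\rho)}|f|\ \ge\ \frac{\rho^\beta}{\omega_d\rho^d}\int_{B_2}|f|=\Big(\frac{r_2}{\rho}\Big)^{d-\beta}r_2^\beta|f|_{B_2}.
\end{equation*}

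Chaining the two displays and using $\rho<3r_2$, so that $(r_2/\rho)^{d-\beta}>3^{-(d-\beta)}$, gives $r_1^\beta|f|_{B_1}\ge 3^{-(d-\beta)}r_2^\beta|f|_{B_2}$. Finally I would insert the comparison hypothesis in the form $|f|_{B_2}\ge C^{-1}|f|_{B_1}$ and cancel the common factor $|f|_{B_1}$, which is legitimate because $0\notin\mathcal{R}_{x_1}^\beta$ forces $M_\beta f(x_1)=r_1^\beta|f|_{B_1}>0$. This yields $(r_2/r_1)^\beta\le C\,3^{d-\beta}$, and taking $\beta$-th roots bounds $r_2/r_1$ by $C^{1/\beta}$ times a fixed power of $3$, as claimed.

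I expect the one delicate point to be the geometric containment step: one must produce a clean, dimension-free bound of the enlargement radius $\rho$ by a fixed multiple of $\max\{r_1,r_2\}$, and it is exactly the barely-intersecting configuration that fixes the numerical constant. Everything else is routine manipulation of the good-ball optimality and monotonicity of the integral. I would also note that, once the larger radius is identified, only one of the two comparison inequalities between $|f|_{B_1}$ and $|f|_{B_2}$ is actually used, the other being present only to keep the statement symmetric in the two indices.
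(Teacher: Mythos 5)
Your proof is correct and essentially identical to the paper's own argument, modulo relabeling the indices: both proofs enlarge concentrically around the center of the smaller good ball until the enlargement (of radius at most $3\max\{r_1,r_2\}$) engulfs the larger ball, invoke the centered optimality $r^\beta|f|_{B}\geq\rho^\beta|f|_{B(x,\rho)}$ at that center, discard the excess mass, and then use one side of the comparability hypothesis before cancelling the (implicitly positive) average. One remark: like the paper's proof, your computation yields the constant $C^{1/\beta}3^{(d-\beta)/\beta}$ rather than the $C^{1/\beta}3^{(d-\beta)/d}$ displayed in the statement, which appears to be a typo in the paper (its later application with $d=1$, $C=4$ indeed uses $3^{(1-\beta)/\beta}$), so your closing phrase ``as claimed'' should really be ``as in the intended statement.''
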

\begin{proof}
We assume without loss of generality that $r_1\geq r_2$. This implies that $B(x_1,r_1)\subseteq B(x_2,3r_1)$. In particular, as $B_2 \in \mathcal{B}_{x_2}^\beta$, one trivially has that
$$
M_\beta f(x_2)=r^\beta_2|f|_{B_2}\geq (3r_1)^\beta|f|_{B(x_2,3r_1)}.
$$
This and the fact that $B(x_1,r_1)\subseteq B(x_2,3r_1)$, readily imply
\begin{equation*}
|f|_{B_2}\geq 3^\beta\left(\frac{r_1}{r_2}\right)^{\beta}|f|_{B(x_2,3r_1)} \geq  3^{\beta-d}\left(\frac{r_1}{r_2}\right)^{\beta}|f|_{B(x_1,r_1)} \geq \frac{1}{C} 3^{\beta-d}\left(\frac{r_1}{r_2}\right)^{\beta}|f|_{B_2}.
\end{equation*}
Then we conclude that
\begin{equation*}
1\leq \frac{r_1}{r_2}\leq C^{\frac{1}{\beta}}3^{\frac{d-\beta}{\beta}}
\end{equation*}
where the lower bound simply follows as $r_1 \geq r_2$.
\end{proof}

The above proposition provides a comparability among good radii for intersecting good balls with comparable non-fractional average. Consequently, this provides a uniform lower bound for all $r_x \in \CR_x^\beta$ over $x \in \R^d$ such that $\cap \, B (x,r_x)\neq\emptyset$. Note that this does no longer work for $\beta=0$. Interestingly, the availability of such a lower bound when $\beta>0$ will allow us to carry a similar analysis to that performed for $\widetilde{M}_\beta^I$ in the higher dimensional, radial case in \cite{LM2017}. In particular, Lemma 2.10 in \cite{LM2017} adapts as follows, which will allow to exploit Proposition \ref{prop comparable radii}.

\begin{lemma}\label{lemma2 d=1}
Suppose that $f\in W^{1,1}_{loc}(\R)$, $0<\beta<1$, $B_x=B(x,r_x)\in \mathcal{B}^{\beta}_x$ for some $x\in\R$, and
\begin{equation*}
E_x:=\{z \in B_x\,:\,\frac{1}{2}|f|_{B_x}\leq |f(z)|\leq 2|f|_{B_x}\}\,.
\end{equation*}
Then
\begin{equation*}
|(M_\beta f)'(x)|\leq 4 \, r_x^\beta \intav_{B_x}| f'(z)|\, \chi_{E_x}(z) \dz\,
\end{equation*}
whenever $M_\beta f$ is differentiable at $x$.
\end{lemma}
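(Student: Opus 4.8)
The plan is to reduce the statement to an elementary one–dimensional fact about the absolutely continuous function $g:=|f|$ (recall $f\in W^{1,1}_{loc}(\R)$, so $g$ is absolutely continuous and $|g'|=|f'|$ a.e.), and then to extract the needed geometric information from the optimality of the good radius. Write $r:=r_x$, $B:=B_x=(x-r,x+r)$ and $a:=|f|_{B}=g_B$; the case $a=0$ forces $g\equiv0$ on $B$ and both sides vanish, so assume $a>0$. Using the derivative representation \eqref{eq:derivative d=1} of Lemma~\ref{lemma:derivative d=1}, one has $(M_\beta f)'(x)=r^\beta\intav_B|f|'=\tfrac{r^{\beta-1}}{2}\int_B g'$, while the right-hand side of the lemma equals $2r^{\beta-1}\int_{E_x}|g'|$. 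Clearing the common factor $r^{\beta-1}/2$, the assertion is therefore equivalent to the purely real–variable inequality
\[
\Big|\int_B g'\Big|\le 4\int_{E_x}|g'|,\qquad E_x=\{z\in B:\ \tfrac12 a\le g(z)\le 2a\}.
\]
I would then split $B=E_x\cup G\cup S$, where $G:=\{g>2a\}$ and $S:=\{g<\tfrac12 a\}$ are relatively open in $[x-r,x+r]$ by continuity of $g$.

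The key structural input is the optimality in the radius, which controls the boundary values $g(x\pm r)$. Applying Corollary~\ref{coro good eq} in one dimension gives $\intav_B g'(y)(x-y)\dy=\beta a$; integrating by parts (using $g(x\pm r)$, which exist by absolute continuity) yields
\[
g(x+r)+g(x-r)=2(1-\beta)a.
\]
Since $0<\beta<1$ and $g\ge0$, both boundary values are \emph{strictly less than} $2a$. Consequently $G$ is compactly contained in the open interval $(x-r,x+r)$: on each of its countably many component intervals $g$ equals $2a$ at both endpoints, so the integral of $g'$ over each component telescopes to zero, and hence $\int_G g'=0$. This is the precise point where $\beta>0$ enters — it rules out any excursion of $g$ above $2a$ reaching the boundary of $B$.

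It then remains to treat the sublevel set. Because $\int_G g'=0$, we have $\int_B g'=\int_{E_x}g'+\int_S g'$ with $|\int_{E_x}g'|\le\int_{E_x}|g'|$; and, exactly as for $G$, only the (at most two) boundary-touching components of $S$ contribute, giving $|\int_S g'|\le a$ and, crucially, $\int_S g'\neq0$ only if some boundary value is $<\tfrac12 a$. In that case I would run the following traversal argument: since $g_B=a$, continuity provides the last point $t_0\le x+r$ with $g(t_0)=a$ and a first subsequent point $\tau$ with $g(\tau)=\tfrac12 a$; on $[t_0,\tau]$ one has $\tfrac12 a\le g\le a$, so $[t_0,\tau]\subseteq E_x$ and
\[
\int_{E_x}|g'|\ge\int_{t_0}^{\tau}|g'|\ge|g(t_0)-g(\tau)|=\tfrac12 a.
\]
Hence $|\int_S g'|\le a\le 2\int_{E_x}|g'|$, and combining the pieces gives $|\int_B g'|\le\int_{E_x}|g'|+2\int_{E_x}|g'|=3\int_{E_x}|g'|\le 4\int_{E_x}|g'|$, which is the desired inequality (in fact with room to spare). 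The main obstacle is this last, elementary but delicate, lower bound: one must verify that whenever $g$ dips below $\tfrac12 a$ at an endpoint it must genuinely cross the band $[\tfrac12 a,a]\subseteq E_x$, and that both superlevel and sublevel sets contribute to $\int_B g'$ only through their boundary-touching components. All of this rests on the optimality identity above, so I would prove that relation carefully first.
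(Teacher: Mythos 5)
Your proof is correct, and it is a genuinely different way of finishing the argument, even though it shares the paper's two structural inputs: the derivative representation \eqref{eq:derivative d=1} and the optimality identity \eqref{char} (Corollary \ref{coro good eq}) exploited through integration by parts. Indeed, your boundary identity $g(x+r_x)+g(x-r_x)=2(1-\beta)a$, with $g=|f|$ and $a=|f|_{B_x}$, is algebraically equivalent to the paper's display \eqref{d=1 int by parts}, which reads $(M_\beta f)'(x)=r_x^{\beta-1}\big[(1-\beta)|f|_{B_x}-|f(x-r_x)|\big]$ after choosing $z=x+r_x$ under the normalization $(M_\beta f)'(x)>0$. From that point on the two arguments diverge. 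The paper drops the factor $(1-\beta)$ and proves $|f|_{B_x}-|f(x-r_x)|\leq 2\int_{B_x}|f'|\chi_{E_x}$ by a two-case intermediate-value argument joining a point $z_0$ with $|f(z_0)|=|f|_{B_x}$ to the boundary value, which yields the constant $4$; it never needs to consider the superlevel set. You instead keep $\int_{B_x}g'=g(x+r_x)-g(x-r_x)$ and decompose $B_x$ into $E_x$, $G=\{g>2a\}$ and $S=\{g<\tfrac12 a\}$: the contribution of $G$ vanishes by telescoping over its components (this is precisely where $\beta>0$ enters for you, since both boundary values are at most $2(1-\beta)a<2a$, so no component of $G$ reaches $\partial B_x$), while $S$ contributes only through its at most two boundary-touching components, each of which forces a crossing of the band $[\tfrac12 a,a]\subseteq E_x$ and hence the lower bound $\int_{E_x}|g'|\geq \tfrac12 a$. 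Your route buys a slightly better constant ($3$ instead of $4$) and makes the role of the fractional parameter completely explicit through the control of \emph{both} boundary values; the paper's route is shorter, needs only one boundary value, and avoids the level-set bookkeeping. The points you flag as delicate (the mirror-image traversal when the left endpoint dips below $\tfrac12 a$, and the verification that $g\leq a$ on $[t_0,\tau]$) are resolved by exactly the symmetric intermediate-value reasoning you indicate, so they are presentational rather than genuine gaps.
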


\begin{proof}
Assume without loss of generality that $(M_\beta f)'(x)>0$, and let $z=x+r_x$. Then, by \eqref{eq:derivative d=1}, integration by parts and \eqref{char}
\begin{align}
    (M_\beta f)'(x) & \, =\, r_x^\beta \intav_{B_x} |f|'(y) \frac{(z-x)}{r_x} \dy \notag  \\
    & =\, \frac{r_x^{\beta}}{r_x} \left[ \intav_{B_x} |f|'(y) (z-y) \dy +  \intav_{B_x} |f|'(y) (y-x) \dy \right] \notag  \\
    & =\, \frac{r_x^{\beta}}{r_x} \left[ - |f(x-r_x)| + \intav_{B_x} |f(y)|  \dy - \beta \intav_{B_x} |f(y)| \dy \right]. \label{d=1 int by parts}
\end{align}
Thus, it suffices to show that
$$
 |f|_{B_x} -  |f(x-r_x)|\leq 2\int_{B_x}|f'(z)|\, \chi_{E_x}(z)\dz\,.
$$

As $f \in W^{1,1}(\R)$, $f$ is continuous. In particular, we can choose $z_0\in B_x$ such that $f(z_0)=|f|_{B_x}$. 
It is immediate from \eqref{d=1 int by parts} that $f(z_0)\geq f(x-r_x)$. We will analyse two different situation according to the relative size of $f(x-r_x)$ and $f(z_0)/2$. Note that $z_0\in E_x$.

{\it{Case 1:}} $f(x-r_x)\geq f(z_0)/2$. In this case we have that $x-r_x\in E_x$. By the continuity of $f$ there exist $z_1,z'_0\in B_x$ such that $f(z_1)=f(x-r_x)$ and $f(z'_0)=f(z_0)$ with $[z_1,z'_0]\subseteq E_x$ (or $[z'_0,z_1] \subseteq E_x$). Then
$$
f(z_0)-f(x-r_x)=f(z'_0)-f(z_1)\leq \int_{B_x}|f'(z)|\, \chi_{E_x}(z)\dz.
$$

{\it{Case 2:}} $f(x-r_x)< f(z_0)/2$. By the continuity of $f$ there exists $z_2,z'_0\in [x-r_x,z_0]$ such that $f(z_2)=f(z_0)/2$, $f(z'_0)=f(z_0)$ and  $[z_2,z'_0] \subseteq E_x$. Then
$$
f(z_0)-f(x-r_x)\leq f(z_0)=2(f(z'_0)-f(z_2)) \leq 2\int_{z_2}^{z'_0}|f'(z)|\, \chi_{E_x}(z)\dz.
$$

Combining both cases one obtains the desired result. 
\end{proof}

We can then proceed to the proof of \eqref{eq:endpoint sobolev d=1}, and therefore of Theorem \ref{thm:d=1}.

\begin{proof}[Proof of Theorem \ref{thm:d=1}]\label{subsec: Proof of thm 1.1}
By Lemma \ref{lemma:derivative d=1}, $M_\beta f$ is differentiable except on a set of measure zero. Thus, it suffices to show the bound \eqref{eq:endpoint sobolev d=1} on the set
\begin{equation*}
\Omega:=\{\,x \in \R\,:\, M_{\beta}f \ \text{is differentiable at}\ x \ \,\, \text{and}\, \,\ (M_{\beta}f)'(x)\not=0\,\}\,.
\end{equation*}
For each $x \in \Omega$, fix $B_x:=B(x,r_x)\in \mathcal{B}_x^\beta$ such that $r_x$ is the smallest possible radius. 

By Lemma \ref{lemma:derivative d=1} and Lemma \ref{lemma2 d=1}, one has
\begin{align*}
\int_{\Omega}  |(M_{\beta}f)'(x)|^q \dx\,&=\, \int_{\Omega}\bigg|r_x^{\beta}\intav_{B_x} |f|' (y) \dy\,\bigg|^q \dx\\
&= \int_{\Omega}\frac{r_x^{q\beta}}{2^{q-1}r_x^{q-1}}\bigg|\int_{B_x}  |f|' (y) \dy\,\bigg|^{q-1}\bigg|\intav_{B_x}  |f|'(y) \dy\,\bigg| \dx\,\\
&\leq  \,\frac{1}{2^{q-1}}\norm{ f'}_1^{q-1}\int_{\Omega}\bigg|\intav_{B_x}  |f|'(y) \dy\,\bigg| \dx\,\\
&\leq  \,\frac{4}{2^{q-1}}\norm{ f'}_1^{q-1}   \int_{\R}\,|f'(y)|\bigg(\int_{\Omega}\frac{\chi_{B_x}(y)\chi_{E_x}(y)}{|B_x|} \dx\,\bigg) \dy\,
\end{align*}
using that $q\beta=q-1\,$ and where $E_x$ is the set defined in Lemma \ref{lemma2 d=1}. 

We analyse the inner integral for a fixed $y\in\R$. We may assume that there exist $x_1, x_2 \in \R$ such that
\begin{equation*}
\chi_{B_{x_1}}(y)\chi_{E_{x_1}}(y)\not =0\qquad \text{ and }\qquad \chi_{B_{x_2}}(y)\chi_{E_{x_2}}(y)\not =0\,,
\end{equation*}
as otherwise the inner integral vanishes. In particular, this implies that $B_{x_1} \cap B_{x_2} \neq \emptyset$ and, by definition of $E_{x_1}$ and $E_{x_2}$,
\begin{align*}
\frac{1}{2}|f|_{B_{x_1}}\leq |f(y)|\leq 2|f|_{B_{x_1}}\qquad \text{ and } \qquad \frac{1}{2}|f|_{B_{x_2}}\leq |f(y)|\leq 2|f|_{B_{x_2}}\,.
\end{align*}
Therefore
\begin{equation*}
\frac{1}{4}|f|_{B_{x_1}}\leq |f|_{B_{x_2}}\leq 4|f|_{B_{x_1}}\,,
\end{equation*}
and by Proposition \ref{prop comparable radii} we obtain
\begin{equation*}
4^{-\frac{1}{\beta}} 3^{-\frac{1-\beta}{\beta}}\leq \frac{r_{x_1}}{r_{x_2}} \leq 4^{\frac{1}{\beta}} 3^{\frac{1-\beta}{\beta}}.
\end{equation*}
This means that there exists $x_0 \in \Omega_y:=\{ x \in \Omega  : \chi_{B_{x}}(y)\chi_{E_{x}}(y)\not =0 \}$ such that 

\begin{equation*}\label{eqc}
|x-y|\leq r_x\leq 4^{\frac{1}{\beta}} (3^{\frac{1-\beta}{\beta}}) r_{x_0}
\end{equation*}
and
\begin{equation*}|B_x|=2r_x\geq 2(4^{-\frac{1}{\beta}} 3^{-\frac{1-\beta}{\beta}})r_{x_0} = 4^{-\frac{1}{\beta}} 3^{-\frac{1-\beta}{\beta}}|B_{x_0}|\,
\end{equation*}
for all $x \in \Omega_y$. Finally, this implies that 
\begin{align*}
\int_{\Omega}\frac{\chi_{B_x}(y)\chi_{E_x}(y)}{|B_x|} \dx\,&\leq 4^{\frac{1}{\beta}} 3^{\frac{1-\beta}{\beta}} \int_{B(y,4^{\frac{1}{\beta}} 3^{\frac{1-\beta}{\beta}}r_{x_0})} \frac{\mathrm{d}x}{|B_{x_0}|}\\ \,&\leq\,4^{\frac{2}{\beta}} 3^{\frac{2(1-\beta)}{\beta}}
\end{align*} 
for all $y \in \R$. Altogether,
\begin{align*}
\int_{\R}  |(M_{\beta}f)'(x)|^q \dx
\leq&  \,\frac{4}{2^{q-1}}\norm{ f'}_1^{q-1}   \int_{\R}\,|f'(y)|\bigg(\int_{\Omega}\frac{\chi_{B_x}(y)\chi_{E_x}(y)}{|B_x|} \dx\,\bigg) \dy\,\\
\leq& \, 2^{-\frac{\beta}{1-\beta}} 4^{\frac{2}{\beta} + 1} 3^{\frac{2(1-\beta)}{\beta}}\|f'\|^{q}_1,
\end{align*}
as desired.
\end{proof}


\section{Relation between $\nabla M_\beta f$ and $\nabla \widetilde{M}_\beta f$}\label{sec:relation}

The goal of this section is to establish an inequality that relates $\nabla M_\beta f$ with its non-centered counterpart $\nabla \widetilde{M}_\beta f$. This inequality, which is shown in Lemma \ref{lemma radios grandes}, may be seen as the analogue at the derivative level of the trivial bound $M_\beta f \leq \widetilde{M}_\beta f$, and it is the first time that such a relation has been obtained at the derivative level. In particular, it will allow us to use some of the techniques in \cite{LM2017} to deal with the term $\nabla \widetilde{M}_\beta f$, although additional difficulties will arise.

To show the upcoming Lemma \ref{lemma radios grandes} we will need several auxiliary results. Some of them were already observed for $\widetilde{M}_\beta$ in \cite{LM2017}, whilst the ones concerning $M_\beta$ are new.

The next proposition was established in \cite[Proposition 2.5]{LM2017}, 
and will also be useful in our case.

\begin{proposition}\label{prop3}
 If $f\in W^{1,1}_{loc}(\R^d)$, $x\in\R^d$, $r>0$, then 
 \begin{equation*}
  \intav_{B(x,r)}\nabla |f|(y)\cdot(x-y)\,\dy\,=\,d\bigg[\intav_{B(x,r)}|f|-\,\intav_{\partial B(x,r)}|f|\bigg]\,.
 \end{equation*}
\end{proposition}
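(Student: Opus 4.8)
The plan is to read off the left-hand side as the divergence of a cleverly chosen vector field and then normalize the resulting volume and surface integrals into averages. First I would set $g:=|f|$, recalling that $f\in W^{1,1}_{loc}(\R^d)$ implies $g\in W^{1,1}_{loc}(\R^d)$ with $\nabla g=\sgn(f)\,\nabla f$ almost everywhere, so that $\nabla|f|$ is genuinely the weak gradient of $g$. The key observation is that the integrand $\nabla g(y)\cdot(x-y)$ is, up to a zeroth-order term, the divergence of the vector field $V(y):=g(y)(x-y)$. Indeed, since $\divt_y(x-y)=-d$, the Leibniz rule gives
\begin{equation*}
\divt V(y)=\nabla g(y)\cdot(x-y)-d\,g(y),
\end{equation*}
so that $\nabla g(y)\cdot(x-y)=\divt V(y)+d\,g(y)$.

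Next I would integrate this identity over $B(x,r)$ and apply the Gauss--Green formula to the $\divt V$ term. On $\partial B(x,r)$ the outward unit normal is $\nu(y)=(y-x)/r$, so that $V(y)\cdot\nu(y)=g(y)\,(x-y)\cdot(y-x)/r=-r\,g(y)$. This produces
\begin{equation*}
\int_{B(x,r)}\nabla g\cdot(x-y)\dy=d\int_{B(x,r)}g\dy-r\int_{\partial B(x,r)}g\;\mathrm{d}\ha^{d-1}.
\end{equation*}
Dividing through by $|B(x,r)|=\omega_d r^d$ and using $|\partial B(x,r)|=\sigma_d r^{d-1}=d\,\omega_d r^{d-1}$ then converts both terms into averages: the volume term becomes $d\intav_{B(x,r)}g$, while the surface term becomes $-d\intav_{\partial B(x,r)}g$, yielding the claimed identity once we recall $g=|f|$.

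The main obstacle is the justification of the Gauss--Green formula, since $g$ is only a Sobolev function: the surface integral must be interpreted through the trace of $g$ on the sphere rather than a naive pointwise restriction, and one must check $V=g\,(x-\,\cdot\,)\in W^{1,1}(B(x,r))$, which holds because $g\in W^{1,1}$ and $y\mapsto x-y$ is smooth and bounded on the ball. I would resolve this by mollification: choosing smooth $g_n\to g$ in $W^{1,1}(B(x,2r))$, the identity holds classically for each $g_n$, and one passes to the limit. The only delicate point is the convergence of the boundary integrals, which for almost every $r$ follows from the coarea formula, since $\int_{\partial B(x,\rho)}|g_n-g|\,\mathrm{d}\ha^{d-1}\to 0$ in $L^1_{loc}(\dr)$ and hence, along a subsequence, for almost every $\rho$. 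Interpreting $\intav_{\partial B(x,r)}|f|$ via the trace (which is well defined for every $r>0$ on the smooth sphere, and agrees with the precise-representative restriction for a.e. $r$), the identity then holds for all $r>0$.
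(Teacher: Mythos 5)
Your proof is correct and takes essentially the same route as the paper: the identity is \cite[Proposition 2.5]{LM2017}, and the paper's own proof of the companion Proposition \ref{prop:centered spherical} (this statement with $x$ replaced by $z\in\partial B(x,r)$) is precisely the componentwise Gauss divergence computation for $|f|(y)(z_i-y_i)$, which matches your divergence-theorem argument for $V(y)=|f|(y)(x-y)$ together with the normalization $\sigma_d=d\,\omega_d$. Your additional mollification/trace justification is a standard detail that the paper leaves implicit.
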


However, we will also need the following variant, which is more suitable in the study of the centered fractional maximal function.

\begin{proposition}\label{prop:centered spherical}
If $f \in W^{1,1}_{loc}(\R^d)$, $x \in \R^d$, $r>0$ and $z \in \partial B(x,r)$, then
\begin{align*}
    \intav_{B(x,r)} \nabla|f|(y)\cdot(z-y) \dy & = d\left[\intav_{B(x,r)}|f(y)| \dy-\intav_{\partial B(x,r)}|f(y)| \dy\right] \\
    &  \qquad  + \frac{d}{r^2} \intav_{\partial B(x,r)} |f(y)| (z- x) \cdot (y - x) \dy.
\end{align*}

\end{proposition}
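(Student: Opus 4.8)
The plan is to prove Proposition \ref{prop:centered spherical} by reducing it to the already-established Proposition \ref{prop3} through a clever decomposition of the vector field $z-y$. The key observation is that for $z \in \partial B(x,r)$, one can write
\begin{equation*}
z - y = (x - y) + (z - x),
\end{equation*}
so that the integrand splits as $\nabla|f|(y)\cdot(x-y) + \nabla|f|(y)\cdot(z-x)$. The first piece is handled \emph{exactly} by Proposition \ref{prop3}, producing the first bracketed term on the right-hand side. The entire content of the proposition is therefore to show that
\begin{equation*}
\intav_{B(x,r)}\nabla|f|(y)\cdot(z-x)\dy = \frac{d}{r^2}\intav_{\partial B(x,r)}|f(y)|\,(z-x)\cdot(y-x)\dy.
\end{equation*}
Since $z-x$ is a \emph{constant} vector (independent of the integration variable $y$), I would pull it outside and recognize the left-hand side as $(z-x)\cdot \intav_{B(x,r)}\nabla|f|(y)\dy$, reducing everything to computing the average of the gradient.

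The main tool for this reduction is the divergence theorem. First I would establish the identity $\int_{B(x,r)}\nabla|f|(y)\dy = \int_{\partial B(x,r)}|f(y)|\,\nu(y)\,d\mathcal{H}^{d-1}(y)$, where $\nu(y)=(y-x)/r$ is the outward unit normal on the sphere; this follows by applying the divergence theorem componentwise (i.e. to each vector field $|f|\,\e_j$) and is valid for $f\in W^{1,1}_{loc}$ by the usual approximation by smooth functions. Dotting with the constant vector $z-x$ and dividing by the volume $|B(x,r)| = \omega_d r^d$ then yields
\begin{equation*}
\intav_{B(x,r)}\nabla|f|(y)\cdot(z-x)\dy = \frac{1}{\omega_d r^d}\int_{\partial B(x,r)}|f(y)|\,\frac{(y-x)\cdot(z-x)}{r}\,d\mathcal{H}^{d-1}(y).
\end{equation*}
The final step is purely bookkeeping with the surface measure: using $|\partial B(x,r)| = \sigma_d r^{d-1}$ and the relation $\sigma_d = d\,\omega_d$ between the sphere area and the ball volume constants, the prefactor $\frac{1}{\omega_d r^{d+1}}$ rewrites as $\frac{d}{r^2}\cdot\frac{1}{\sigma_d r^{d-1}}$, which converts the raw surface integral into the \emph{averaged} spherical integral $\frac{d}{r^2}\intav_{\partial B(x,r)}$, matching the claimed form.

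I do not anticipate a genuine obstacle here: the result is essentially a weighted divergence-theorem identity, and the only point requiring mild care is the justification of the divergence theorem in the Sobolev setting (Sobolev functions need not have well-defined boundary traces on a single sphere for each fixed $r$). I would address this by the standard device of regularizing $f$ via mollification, applying the classical divergence theorem to the smooth approximants, and passing to the limit—noting that for almost every radius $r$ the trace of $|f|$ on $\partial B(x,r)$ is controlled, which suffices since the identity is to be used inside integrations over $r$ in the sequel. The bulk of the work is thus the transparent algebraic splitting of $z-y$ and the constant-factor tracking between the volume and surface normalizations.
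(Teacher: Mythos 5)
Your proof is correct, and it is organized differently from the paper's, although both ultimately rest on the Gauss divergence theorem. The paper proves the identity in one stroke: it integrates by parts componentwise against the full, non-constant field $y \mapsto z-y$, which produces the volume term $d\int_{B(x,r)}|f|$ plus the boundary flux of $|f|(z-y)$ against the normal $\nu(y)=(y-x)/r$, and only then splits $z-y=(z-x)-(y-x)$ \emph{on the boundary}, where $(y-x)\cdot\nu(y)=r$; the constants are tracked via $\sigma_d=d\,\omega_d$ exactly as in your last step. You instead split $z-y=(x-y)+(z-x)$ \emph{inside the volume integral} from the outset, dispose of the $(x-y)$ piece by quoting Proposition \ref{prop3}, and are left with a constant field $(z-x)$, for which the divergence theorem reduces to the elementary gradient-average identity $\int_{B(x,r)}\nabla|f|(y)\dy=\int_{\partial B(x,r)}|f(y)|\,\nu(y)\,d\mathcal{H}^{d-1}(y)$; your constant bookkeeping ($\tfrac{1}{\omega_d r^{d+1}}=\tfrac{d}{r^2}\cdot\tfrac{1}{\sigma_d r^{d-1}}$) is right. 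What your route buys is modularity: the only new computation is the boundary-flux identity for a fixed direction, and Proposition \ref{prop3} is reused as a black box (indeed, your identity formally contains it as the case $z=x$, since nothing in the algebra uses $z\in\partial B(x,r)$). What the paper's route buys is self-containedness: it does not lean on Proposition \ref{prop3}, which is quoted there from \cite{LM2017} without proof. Your closing remark on traces is apt; both arguments need $|f|$ to have a well-defined restriction to the sphere, a point the paper glosses over, so if anything you are more careful than the source---though note that in the sequel the identity is applied at specific good radii rather than under an integral in $r$, so the precise-representative (rather than the a.e.-radius) justification is the one actually needed.
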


\begin{proof}
By Gauss divergence theorem,
\begin{align*}
\int_{B(x,r)} & \nabla|f| (y)\cdot(z-y) \dy \\
&=\sum_{i=1}^{d}\int_{B(x,r)}\frac{\partial|f|}{\partial y_{i}}(y) \, (z_{i}-y_{i}) \dy\\
&=\sum_{i=1}^{d}\left(\int_{\partial B(x,r)}|f(y)|(z_{i}-y_{i}) \frac{(y_{i}-x_{i})}{|y-x|} \dy+\int_{B(x,r)}|f(y)| \dy\right)\\
&=-\int_{\partial B(x,r)}|f(y)|\sum_{i=1}^{d}\frac{|y_{i}-x_{i}|^{2}}{|y-x|}\dy\\
&\ \ \ \ \ \quad + \int_{\partial B(x,r)} |f(y)|\sum_{i=1}^d (z_i - x_i) \frac{(y_i - x_i)}{|y-x|} \dy +\sum_{i=1}^{d}\int_{B(z,r)}|f(y)| \dy\\
&=d\int_{B(x,r)}|f(y)| \dy-\int_{\partial B(x,r)}|f(y)||y-x| \dy\\
&\ \ \ \ \ \quad + \int_{\partial B(x,r)} |f(y)| (z - x) \cdot \frac{(y - x)}{|y-x|} \dy \\
&=d\left[\int_{B(x,r)}|f(y)| \dy-\frac{r}{d}\int_{\partial B(x,r)}|f(y)| \dy\right]\\
&\ \ \ \ \ \quad + \frac{1}{r} \int_{\partial B(x,r)} |f(y)| (z - x) \cdot(y - x) \dy\\
&=d\left[\int_{B(x,r)}|f(y)| \dy-\frac{r^{d}w_{d}}{r^{d-1}\sigma_{d}}\int_{\partial B(x,r)}|f(y)| \dy\right]\\
&\ \ \ \ \ \quad + \frac{d r^d \omega_d}{r^2 r^{d-1} \sigma_d} \int_{\partial B(x,r)} |f(y)| (z - x) \cdot (y - x)\dy \\
&=d\left[\int_{B(x,r)}|f(y)| \dy-|B(x,r)|\intav_{\partial B(x,r)}|f(y)| \dy\right] \\
&\ \ \ \ \ \quad+ \frac{d |B(x,r)|}{r^2} \intav_{\partial B(x,r)} |f(y)|\, (z- x) \cdot (y - x) \dy.
\end{align*}
The result follows dividing by $|B(x,r)|$.
\end{proof}

Propositions \ref{prop3} and \ref{prop:centered spherical} can be combined to yield the following.

\begin{corollary}\label{cor:centered}
Let $0 < \beta < d$. If $f \in W^{1,1}_{loc}(\R^d)$, $x \in \R^d$, $B(x,r) \in \CB_x^\beta$ and $z \in \partial B(x,r)$, then
\begin{equation*}
 \intav_{B(x,r)} \nabla|f|(y)\cdot(z-y) \dy \,  \leq \,  \frac{d^2}{\beta} \Big[ \intav_{B(x,r)} |f| - \intav_{\partial B(x,r)} |f| \Big].
\end{equation*}

\end{corollary}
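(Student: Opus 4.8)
The plan is to start from the exact identity furnished by Proposition \ref{prop:centered spherical}, to discard the extra spherical term by a Cauchy--Schwarz estimate, and then to use the good-ball condition to convert the resulting average into the claimed right-hand side. First I would apply Proposition \ref{prop:centered spherical} to write
$$
\intav_{B(x,r)} \nabla|f|(y)\cdot(z-y) \dy = d\Big[\intav_{B(x,r)}|f| - \intav_{\partial B(x,r)}|f|\Big] + \frac{d}{r^2}\intav_{\partial B(x,r)}|f(y)|\,(z-x)\cdot(y-x)\dy.
$$
Since $z \in \partial B(x,r)$ and the remaining integration runs over $y \in \partial B(x,r)$, both $z-x$ and $y-x$ have length $r$, so Cauchy--Schwarz gives $(z-x)\cdot(y-x) \leq r^2$. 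As $|f|\geq 0$, the last term is bounded by $d\intav_{\partial B(x,r)}|f|$, and the whole expression collapses to
$$
\intav_{B(x,r)} \nabla|f|(y)\cdot(z-y) \dy \leq d\intav_{B(x,r)}|f|.
$$

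The remaining step is to replace the factor $d\intav_{B(x,r)}|f|$ by $\tfrac{d^2}{\beta}\big[\intav_{B(x,r)}|f| - \intav_{\partial B(x,r)}|f|\big]$, and this is precisely where the hypothesis $B(x,r)\in\CB_x^\beta$ enters. Here I would combine two identities. On the one hand, Corollary \ref{coro good eq}, together with $M_\beta f(x)=r^\beta\intav_{B(x,r)}|f|$ (valid as $B(x,r)$ is a good ball), yields after dividing by $r^\beta$ that
$$
\intav_{B(x,r)} \nabla|f|(y)\cdot(x-y)\dy = \beta \intav_{B(x,r)}|f|.
$$
On the other hand, Proposition \ref{prop3} identifies this same integral with $d\big[\intav_{B(x,r)}|f| - \intav_{\partial B(x,r)}|f|\big]$. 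Equating the two expressions gives the good-ball relation $\beta\intav_{B(x,r)}|f| = d\big[\intav_{B(x,r)}|f| - \intav_{\partial B(x,r)}|f|\big]$, that is, $d\intav_{B(x,r)}|f| = \tfrac{d^2}{\beta}\big[\intav_{B(x,r)}|f| - \intav_{\partial B(x,r)}|f|\big]$. Substituting this into the previous display completes the argument.

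There is no serious obstacle in this proof; it is essentially a two-line combination of already-established identities. The only point worth emphasizing is conceptual rather than technical: the seemingly strange constant $d^2/\beta$ factorizes as the product of the purely geometric factor $d$ coming from the divergence theorem (Proposition \ref{prop3}) and the factor $d/\beta$ forced by the Euler-type good-ball relation (Corollary \ref{coro good eq}). One should also record that $r>0$ on good radii, since $0\notin\mathcal{R}_x^\beta$, so dividing by $r^\beta$ and by $r^2$ is legitimate throughout.
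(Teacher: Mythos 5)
Your proposal is correct and follows essentially the same route as the paper's own proof: Proposition \ref{prop:centered spherical} plus Cauchy--Schwarz to reduce to $d\intav_{B(x,r)}|f|$, and then the good-ball identity (your use of Corollary \ref{coro good eq} is exactly the paper's appeal to \eqref{char}) combined with Proposition \ref{prop3} to produce the factor $\tfrac{d^2}{\beta}\big[\intav_{B(x,r)}|f|-\intav_{\partial B(x,r)}|f|\big]$. No gaps; the remark on the factorization $d\cdot\tfrac{d}{\beta}$ of the constant is a nice observation, though not needed.
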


\begin{proof}
Note that by the Cauchy-Schwarz inequality,
$$
\frac{d}{r^2} \intav_{\partial B(x,r)} |f(y)| (z- x) \cdot (y - x) \dy \leq d \intav_{\partial B(x,r)} |f(y)| \dy.
$$
Therefore, Proposition \ref{prop:centered spherical}, \eqref{char} and Proposition \ref{prop3} yield
\begin{align*}
 \intav_{B(x,r)} \nabla|f|(y)\cdot(z-y) \dy  \, & \leq \, d \intav_{B(x,r)}|f(y)| \dy \\
 & =\, \frac{d}{\beta} \intav_{B(x,r)} \nabla |f|(y) \cdot (x-y) \dy  \\
 & =\, \frac{d^2}{\beta} \Big[ \intav_{B(x,r)} |f| - \intav_{\partial B(x,r)} |f| \Big],
\end{align*}
as desired.
\end{proof}

\begin{lemma}\label{lemma1}
Let $0<\beta<d$. Suppose that $f\in W^{1,1}_{loc}(\R^d)$ is radial, and let $B=B(x,r)\in \mathcal{B}^{\beta}_x$ for some $x\in\R^d$. Then
\begin{equation*}
\bigg|\intav_{B}\nabla |f|(y)\dy\,\bigg|\,\leq \,\frac{d^2}{\beta r}\bigg[\big(1-\beta^2/d^2)\intav_{B}|f(y)|\dy\,-\intav_{\partial B}|f(y)| \dy\,\bigg]\,.
\end{equation*}
\end{lemma}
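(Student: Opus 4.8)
The plan is to recover the magnitude $\big|\intav_{B}\nabla|f|(y)\dy\big|$ by testing the average gradient against an arbitrary unit direction and invoking Corollary \ref{cor:centered} \emph{uniformly over the boundary sphere}. The point is that every unit vector $e\in\R^d$ is realized as $e=(z-x)/r$ for the boundary point $z:=x+re\in\partial B(x,r)$, so the single-direction estimate of Corollary \ref{cor:centered} is actually an estimate in every direction simultaneously. The algebraic engine is the splitting
\begin{equation*}
z-y=(z-x)+(x-y)=re+(x-y),
\end{equation*}
which separates the integrand into a piece aligned with $e$ and a piece that is precisely the one governed by the identity \eqref{char}.

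Carrying this out, I would first write, for $z=x+re$,
\begin{equation*}
\intav_{B}\nabla|f|(y)\cdot(z-y)\dy=r\,e\cdot\intav_{B}\nabla|f|(y)\dy+\intav_{B}\nabla|f|(y)\cdot(x-y)\dy.
\end{equation*}
The second term on the right is evaluated exactly using \eqref{char}: since $\int_{B}|f|=-\tfrac1\beta\int_{B}\nabla|f|(y)\cdot(y-x)\dy$, dividing by $|B|$ gives $\intav_{B}\nabla|f|(y)\cdot(x-y)\dy=\beta\intav_{B}|f|$. Substituting this and bounding the left-hand side through Corollary \ref{cor:centered} yields, for every unit vector $e$,
\begin{equation*}
r\,e\cdot\intav_{B}\nabla|f|(y)\dy+\beta\intav_{B}|f|\leq\frac{d^{2}}{\beta}\Big[\intav_{B}|f|-\intav_{\partial B}|f|\Big].
\end{equation*}
Rearranging and using $\tfrac{d^{2}-\beta^{2}}{\beta}=\tfrac{d^{2}}{\beta}\big(1-\beta^{2}/d^{2}\big)$, this becomes
\begin{equation*}
r\,e\cdot\intav_{B}\nabla|f|(y)\dy\leq\frac{d^{2}}{\beta}\Big[\big(1-\beta^{2}/d^{2}\big)\intav_{B}|f|-\intav_{\partial B}|f|\Big].
\end{equation*}

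To conclude, since the last inequality holds for \emph{every} unit vector $e$, I would take $e$ to be the unit vector in the direction of $\intav_{B}\nabla|f|(y)\dy$ (the inequality being trivial, and forcing the right-hand side to be nonnegative, when this vector vanishes). Then the left-hand side equals $r\,\big|\intav_{B}\nabla|f|(y)\dy\big|$, and dividing by $r$ delivers exactly the asserted bound. The only genuinely non-routine step is the opening observation that the magnitude of a vector is the supremum of its projections onto unit directions, each of which is encoded by a boundary point $z=x+re$; this is what lets the directional estimate of Corollary \ref{cor:centered} be upgraded to control of the full vector $\intav_{B}\nabla|f|$. I remark that radiality of $f$ is not actually needed in this argument—the identity \eqref{char} and Corollary \ref{cor:centered} both hold for general $f\in W^{1,1}_{loc}(\R^d)$—and the hypothesis is simply inherited from the surrounding radial setting in which the lemma will be applied.
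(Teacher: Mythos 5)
Your proof is correct, and its computational core --- the splitting $z-y=(z-x)+(x-y)$, the evaluation of the second piece via \eqref{char}, and the bound on the full expression via Corollary \ref{cor:centered} --- is exactly the paper's. The one genuine difference is how the boundary point $z$ is chosen. The paper takes $z=x\pm r\frac{x}{|x|}$ with the sign dictated by the direction of $\nabla M_\beta f(x)$: for radial $f$, the vector $\intav_{B}\nabla|f|(y)\dy$ is parallel to $x/|x|$ (by symmetry, or via items i) and iii) of Lemma \ref{basic}), so projecting onto the correctly signed radial direction recovers the full magnitude. You instead exploit the fact that Corollary \ref{cor:centered} holds for \emph{every} $z\in\partial B(x,r)$ simultaneously, and project directly onto the unit vector in the direction of $\intav_{B}\nabla|f|(y)\dy$, whatever that direction may be. This buys two small but real improvements: radiality of $f$ becomes superfluous, as you note, and you avoid any implicit appeal to the differentiability of $M_\beta f$ at $x$ and to the representation formula \eqref{Luiro formula}, which the paper's phrasing (``the sign is chosen according to the direction of $\nabla M_\beta f(x)$'') quietly invokes in order to identify the direction of the average gradient. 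In the paper's setting nothing is gained in applications, since the lemma is only ever used for radial functions and at points of differentiability, but your version is the cleaner and more general statement of the same argument.
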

\begin{proof}
Suppose that $B=B(x,r)$ and $z \in \partial B(x,r)$ such that $z= x \pm r \frac{x}{|x|}$, where the sign is chosen according to the direction of $\nabla M_\beta f(x)$. By \eqref{char} and Corollary \ref{cor:centered}, it follows that
\begin{align*}
\bigg|\intav_{B}\nabla |f|(y)\dy\,\bigg|\, & =\,\intav_{B}\nabla|f|(y)\cdot\bigg(\frac{z-x}{r}\bigg)\dy\,\\
&= \,\frac{1}{r}\bigg[\intav_{B}\nabla |f|(y)\cdot(z-y)\dy\,+\intav_{B} \nabla |f|(y)\cdot(y-x)\dy\,\bigg]\,\\
&=\,\frac{1}{r}\bigg[\intav_{B} \nabla |f|(y)\cdot(z-y)\dy\,-\beta\intav_{B}|f(y)|\dy\,\bigg]\\
&\leq \, \frac{1}{r}\bigg[\frac{d^2}{\beta}\bigg[\intav_{B}|f|-\,\intav_{\partial B}|f|\bigg]-\beta\intav_{B}|f(y)|\dy\,\bigg]\\
&= \,\frac{d^2}{\beta r}\bigg[\big(1-\beta^2/d^2)\intav_{B}|f(y)|\dy\,-\intav_{\partial B}|f(y)| \dy\,\bigg]\,.
\end{align*}
\end{proof}

The following two lemmas contain the key estimates for the proof of the main theorem. Of these, the next one is the novel one, as it includes the aforementioned relation between $M_\beta f$ and $\widetilde{M}_\beta f$ at the derivative level.
\begin{lemma}\label{lemma radios grandes}
Let $0< \beta < d$. Suppose that  $f\in W^{1,1}_{loc}(\R^d)$ is radial and $B\in \mathcal{B}^{\beta}_x$, $\widetilde{B} \in \widetilde{\mathcal{B}}^\beta_x$ for some $x\in\R^d\setminus\{0\}$.
\begin{enumerate}[i)]
    \item If $\nabla M_\beta f (x) \cdot x \leq 0$, then
    \begin{equation*}
        \bigg|\intav_{B(x,r)} \nabla |f|(y)\dy\,\bigg|\,\leq\,\intav_{B(x,r)}|\nabla |f|(y)|\frac{|y|}{|x|} \dy\,.
    \end{equation*}
    \item If $\nabla M_\beta f (x) \cdot x > 0$, then
    \begin{equation*}
        |\nabla M_\beta f(x)|\,\leq\,  r^\beta \intav_{B(x,r)} \nabla |f|(y)\cdot \frac{y}{|x|}\dy  -  \widetilde{r}^\beta \intav_{\widetilde{B}} \nabla |f| (y)  \cdot  \frac{y}{|x|} \dy + \nabla \widetilde{M}_\beta f (x)  \cdot  \frac{x}{|x|},
    \end{equation*}
    where $r$ and $\widetilde r$ denote the radii of $B$ and $\widetilde{B}$ respectively.
\end{enumerate}

\begin{proof}
If $|\nabla M_\beta f(x)|=0$, the claim is trivial. If $\nabla M_\beta f(x) \cdot x < 0$, iii) in Lemma \ref{basic} yields that
\begin{equation*}
\frac{\intav_{B(x,r)}\nabla |f|(y)\,\dy\,}{\bigg|\intav_{B(x,r)}\nabla |f|(y)\,\dy\,\bigg|}\,=\,\frac{-x}{|x|}\,,
\end{equation*}
and by ii) in Lemma \ref{basic},
\begin{align*}
\bigg|\intav_{B(x,r)}\nabla |f|(y) \dy\,\bigg|\, & =\,\intav_{B(x,r)} \nabla |f|(y)\cdot \frac{-x}{|x|}\,\dy\,\\
& =\,\intav_{B(x,r)} \nabla |f|(y)\cdot \frac{-y}{|x|}\,\dy\,-\frac{\beta}{|x|}\intav_{B(x,r)}|f(y)|\,\dy\\
&\leq \, \intav_{B(x,r)}|\nabla |f|(y)|\frac{|y|}{|x|}\,.
\end{align*}
If $\nabla M_\beta f (x) \cdot x >0$, we cannot proceed as before as the term $\frac{\beta}{|x|}\intav_{B(x,r)} | f (y)| \dy$ is actively contributing. Instead, we control it by the non-centered maximal function $\widetilde{M}_\beta f (x)$. More precisely, as $M_\beta f (x) \leq \widetilde{M}_\beta f (x)$, an immediate consequence of Corollary \ref{coro good eq} is
\begin{align*}
    |\nabla M_\beta f (x)|\,\leq \, r^\beta \intav_{B(x,r)} \nabla |f|(y)\cdot \frac{y}{|x|} \dy\,+\frac{\beta}{|x|} \widetilde{M}_\beta f (x).\,
\end{align*}
Let $\widetilde{B}=B(z, \widetilde{r}) \in \widetilde{\mathcal{B}}^\beta_x$. By Corollary \ref{coro good eq} for $\widetilde{M}_\beta$,
$$
\beta \widetilde{M}_\beta f (x) = \widetilde{r}^\beta \intav_{\widetilde{B}} \nabla |f| (y) \cdot (x-y) \dy,
$$
so altogether,
\begin{align*}
|\nabla M_\beta f (x)|\, &\leq \, r^\beta \intav_{B(x,r)} \nabla |f|(y)\cdot \frac{y}{|x|}\dy\,+ \widetilde{r}^\beta \intav_{\widetilde{B}} \nabla |f| (y)  \cdot  \frac{x}{|x|} \dy \\
&\ \ \ \ \ - \, \widetilde{r}^\beta \intav_{\widetilde{B}} \nabla |f| (y)  \cdot  \frac{y}{|x|} \dy \\
& 
= r^\beta \intav_{B(x,r)} \nabla |f|(y)\cdot \frac{y}{|x|}\dy \, - \, \widetilde{r}^\beta \intav_{\widetilde{B}} \nabla |f| (y)  \cdot  \frac{y}{|x|} \dy \\
&\ \ \ \ \ + \nabla \widetilde{M}_\beta f (x)  \cdot  \frac{x}{|x|},
\end{align*}
as desired.
\end{proof}
\end{lemma}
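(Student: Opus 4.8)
The plan is to reduce everything to the two algebraic identities already in hand---Lemma~\ref{basic}~i), which expresses $\nabla M_\beta f(x)$ as $r^\beta\intav_{B}\nabla|f|\dy$, and Corollary~\ref{coro good eq} (equivalently \eqref{char}), which exchanges $x$ for $y$ inside the average at the cost of a $\beta M_\beta f(x)$ term---while exploiting the radial structure of $f$ through Lemma~\ref{basic}~iii) to pin down the direction of $\nabla M_\beta f(x)$. First I would dispose of the case $\nabla M_\beta f(x)=0$: then $\nabla M_\beta f(x)\cdot x=0\le 0$, so part i) is the relevant one, and its left-hand side vanishes by Lemma~\ref{basic}~i), making the inequality trivial.

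For part i), assume $\nabla M_\beta f(x)\cdot x\le 0$ with $\nabla M_\beta f(x)\ne 0$. By Lemma~\ref{basic}~iii) the unit vector $\nabla M_\beta f(x)/|\nabla M_\beta f(x)|$ equals $\pm x/|x|$, and the sign hypothesis forces the value $-x/|x|$; by Lemma~\ref{basic}~i) the average $\intav_{B}\nabla|f|\dy$ points the same way. Writing the magnitude as the directional derivative $\intav_{B}\nabla|f|(y)\cdot(-x/|x|)\dy$, I would then replace $-x$ by $-y$ via Corollary~\ref{coro good eq}, which produces the extra term $-\tfrac{\beta}{|x|}\intav_{B}|f|$. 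Since $\beta>0$ and $\intav_{B}|f|\ge 0$, this term is nonpositive and may simply be discarded, after which the pointwise Cauchy--Schwarz bound $\nabla|f|(y)\cdot(-y)\le|\nabla|f|(y)|\,|y|$ closes the estimate.

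For part ii), assume $\nabla M_\beta f(x)\cdot x>0$. The same direction argument now gives $\nabla M_\beta f(x)/|\nabla M_\beta f(x)|=x/|x|$, so $|\nabla M_\beta f(x)|=r^\beta\intav_{B}\nabla|f|(y)\cdot(x/|x|)\dy$, and Corollary~\ref{coro good eq} rewrites this as $r^\beta\intav_{B}\nabla|f|(y)\cdot(y/|x|)\dy+\tfrac{\beta}{|x|}M_\beta f(x)$. The difficulty---indeed the main new point of the whole argument---is that $\tfrac{\beta}{|x|}M_\beta f(x)$ now carries the \emph{wrong} sign to be thrown away, in contrast to part i). The resolution is to pass to the non-centered operator through the trivial comparison $M_\beta f(x)\le\widetilde M_\beta f(x)$, and then to re-express $\beta\widetilde M_\beta f(x)$ by the \emph{non-centered} version of Corollary~\ref{coro good eq} applied to $\widetilde B=B(z,\widetilde r)\in\widetilde{\CB}^\beta_x$, namely $\beta\widetilde M_\beta f(x)=\widetilde r^\beta\intav_{\widetilde B}\nabla|f|(y)\cdot(x-y)\dy$. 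Substituting and recognizing the $x$-component as $\nabla\widetilde M_\beta f(x)\cdot x$ (via Lemma~\ref{basic}~i) for $\widetilde M_\beta$) yields the claimed inequality.

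The crux lies entirely in part ii); everything else is bookkeeping around the two identities and the radial direction constraint. The delicate step is realizing that the offending $\tfrac{\beta}{|x|}M_\beta f(x)$, which cannot be dropped, can instead be absorbed into $\nabla\widetilde M_\beta f(x)$ together with a compensating $\widetilde B$-average of the gradient---this is precisely the centered-to-non-centered transfer at the derivative level that the lemma is designed to supply.
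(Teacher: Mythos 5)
Your proposal is correct and follows essentially the same route as the paper's own proof: the trivial zero-gradient case, the radial direction constraint from Lemma \ref{basic} iii) combined with the exchange identity of Corollary \ref{coro good eq} to discard the sign-favorable $\beta$-term in part i), and in part ii) the transfer $M_\beta f(x)\leq \widetilde{M}_\beta f(x)$ followed by the non-centered version of Corollary \ref{coro good eq} and the identification of the $x$-component with $\nabla \widetilde{M}_\beta f(x)\cdot \tfrac{x}{|x|}$ via Lemma \ref{basic} i). You even make explicit a couple of steps the paper leaves implicit (e.g.\ that $|\nabla M_\beta f(x)| = \nabla M_\beta f(x)\cdot \tfrac{x}{|x|}$ in part ii)), so there is nothing to correct.
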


The above lemma will be used when $M_\beta f (x) \neq M_\beta^I f(x)$. When those two maximal functions are equal, we use instead the following lemma, which is a minor modification of its non-centered counterpart \cite[Lemma 2.10]{LM2017}.

\begin{lemma}\label{lemma2}
Suppose that $f\in W^{1,1}_{loc}(\R^d)$ is radial, $0<\beta<d$, $B_x=B(x,r_x)\in \mathcal{B}^{\beta}_x$ for some $x\in\R^d$ with $r_x\leq\frac{|x|}{4}\,$, and
\begin{equation*}
E_x:=\{z \in 2B_x\,:\,\frac{1}{2}|f|_{B_x}\leq |f(z)|\leq 2|f|_{B_x}\}\,.
\end{equation*}
 Then
\begin{equation*}
\bigg|\intav_{B_x} \nabla |f|(y) \dy\,\bigg|\leq C(d,\beta)\intav_{2B_x}|\nabla f(z)|\, \chi_{E_x}(z)\dz\,.
\end{equation*}
\end{lemma}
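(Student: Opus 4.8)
The centered geometry of the ball is already encoded in Lemma \ref{lemma1}, so the plan is to use it to reduce the claim to a one-dimensional oscillation estimate for the radial profile of $f$, and then to localise that estimate to the comparability set $E_x$ by repeating the intermediate value argument from the proof of Lemma \ref{lemma2 d=1}. Once the reduction is made, the argument runs parallel to its non-centered counterpart \cite[Lemma 2.10]{LM2017}.

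First I would apply Lemma \ref{lemma1}. Since $0<\beta<d$ we have $1-\beta^2/d^2\leq 1$, and as both averages are nonnegative the right-hand side there is at most $\tfrac{d^2}{\beta r_x}\big[\intav_{B_x}|f|-\intav_{\partial B_x}|f|\big]$ (the bracket being itself nonnegative). Thus
\[
\Big|\intav_{B_x}\nabla|f|(y)\dy\Big|\leq\frac{d^2}{\beta r_x}\Big[\intav_{B_x}|f(y)|\dy-\intav_{\partial B_x}|f(y)|\dy\Big],
\]
and it remains to bound $\tfrac1{r_x}\big(\intav_{B_x}|f|-\intav_{\partial B_x}|f|\big)$ by a dimensional multiple of $\intav_{2B_x}|\nabla f|\,\chi_{E_x}$.

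Here is where I would use radiality together with $r_x\leq|x|/4$. Writing $|f(y)|=\psi(|y|)$, the profile $\psi$ is continuous on the annulus $\{|x|-2r_x\le|y|\le|x|+2r_x\}$, which lies in $\R^d\setminus\{0\}$ precisely because $r_x\le|x|/4$; moreover $|\nabla f(y)|=|\psi'(|y|)|$ almost everywhere, and crucially $E_x$ is itself radial, $E_x=\{z\in 2B_x:\psi(|z|)\in[\tfrac12 m,2m]\}$ with $m:=\intav_{B_x}|f|$. Pushing the normalised volume and surface measures of $B_x$ and $\partial B_x$ forward under $y\mapsto|y|$ produces probability measures $\mu_B,\mu_\partial$ on $I:=[|x|-r_x,|x|+r_x]$, so that
\[
\intav_{B_x}|f|-\intav_{\partial B_x}|f|=\int_{I}\psi(\rho)\,\big(\mathrm d\mu_B-\mathrm d\mu_\partial\big)(\rho)=\int_{I}(\psi(\rho)-m)\,\big(\mathrm d\mu_B-\mathrm d\mu_\partial\big)(\rho),
\]
the last step using that both measures are probabilities. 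Since $\psi$ is continuous and $m$ is a $\mu_B$-average, there is $\rho_0\in I$ with $\psi(\rho_0)=m$, so $\rho_0$ lies in the radial trace of $E_x$.

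The main obstacle is the localisation, and I would handle it exactly as in Lemma \ref{lemma2 d=1}: for each $\rho\in I$, connecting $\rho$ to $\rho_0$ and splitting according to whether the endpoint value is at least $m/2$ yields $|\psi(\rho)-m|\leq 2\int_{[\rho_0,\rho]}|\psi'|\,\chi_{\{\psi\in[m/2,2m]\}}\,\mathrm dt$, which inserts the desired cut-off $\chi_{E_x}$. Integrating this against $\mathrm d\mu_B+\mathrm d\mu_\partial$, swapping the order of integration, and bounding the resulting masses by the total mass leaves a one-dimensional integral $\int_{I}|\psi'|\,\chi_{\{\psi\in[m/2,2m]\}}\,\mathrm dt$ against Lebesgue measure. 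The final point is to convert this back to $\intav_{2B_x}|\nabla f|\chi_{E_x}=\tfrac1{|2B_x|}\int|\psi'|\,\chi_{\{\psi\in[m/2,2m]\}}\,w_2\,\mathrm dt$, where $w_2(\rho)=\mathcal H^{d-1}(\{y\in 2B_x:|y|=\rho\})$; this is exactly where both $r_x\le|x|/4$ and the enlargement from $B_x$ to $2B_x$ are needed, since over the range $I$ — the middle half of the support of $w_2$ — one has $w_2(\rho)\gtrsim_d |2B_x|/r_x$ with no degeneration at the endpoints $|x|\pm 2r_x$, so that $\int_I|\psi'|\chi\,\mathrm dt\lesssim_d r_x\intav_{2B_x}|\nabla f|\chi_{E_x}$. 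The factor $r_x$ so produced cancels the $1/r_x$ coming from Lemma \ref{lemma1}, and collecting the dimensional constants gives the claimed $C(d,\beta)$. The delicate points, as in \cite{LM2017}, are keeping the radial weights uniformly comparable on $I$ (which forces $r_x\le|x|/4$) and checking that the segments realising the oscillation, together with the comparability set, remain inside $2B_x$.
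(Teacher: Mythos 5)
Your overall strategy---Lemma \ref{lemma1} in place of the one-dimensional integration by parts, reduction to the radial profile, the intermediate-value localisation borrowed from Lemma \ref{lemma2 d=1}, and the spherical-cap weight estimate on the middle half of $2B_x$ (which is precisely the content of Proposition \ref{radial1})---is exactly the route the paper intends, and your treatment of the first and last steps is correct. However, there is a genuine error in the middle step: the claimed pointwise bound
\begin{equation*}
|\psi(\rho)-m|\leq 2\int_{[\rho_0,\rho]}|\psi'|\,\chi_{\{\psi\in[m/2,2m]\}}\dt,\qquad m:=|f|_{B_x},
\end{equation*}
is false when $\psi(\rho)>2m$. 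Downward deviations are automatically capped by $m$ (since $\psi\geq 0$), which is why the two-case argument works in that direction, but upward deviations are not: if $\psi$ increases monotonically from $m$ at $\rho_0$ to $Am$ at $\rho$, the right-hand side equals $2m$ (the cut-off only records the passage through the band $[m,2m]$), while the left-hand side is $(A-1)m$, so the inequality fails for every $A>3$. Maximality of $B_x$ cannot rescue it, since $\psi$ may spike far above $2m$ on a set of negligible measure. Since you then integrate this two-sided bound against $\mathrm{d}\mu_B+\mathrm{d}\mu_\partial$, your argument as written breaks down. Note that the paper's Lemma \ref{lemma2 d=1} only ever proves the \emph{one-sided} estimate $|f|_{B_x}-|f(x-r_x)|\leq 2\int|f'|\chi_{E_x}$; promoting it to a bound on the absolute deviation is where you went astray.

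The repair is short and stays entirely inside your own framework. You already observed that $\mu_B$ and $\mu_\partial$ are probability measures with $\int_I\psi\,\mathrm{d}\mu_B=m$; hence the $\mu_B$-term vanishes identically and
\begin{equation*}
\intav_{B_x}|f|-\intav_{\partial B_x}|f|=\int_I(m-\psi)\,\mathrm{d}\mu_\partial\leq\int_I(m-\psi)^{+}\,\mathrm{d}\mu_\partial,
\end{equation*}
so only the one-sided bound $(m-\psi(\rho))^{+}\leq 2\int_I|\psi'|\,\chi_{\{\psi\in[m/2,2m]\}}\dt$ is needed, and that is exactly what your two cases ($\psi(\rho)\geq m/2$ versus $\psi(\rho)<m/2$) establish. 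With this correction, integrating against $\mathrm{d}\mu_\partial$ alone and then applying your weight estimate $w_2(\rho)\gtrsim_d|2B_x|/r_x$ on $I$ (equivalently, Proposition \ref{radial1} applied to the radial function $y\mapsto|\nabla f(y)|\chi_{\{\psi\in[m/2,2m]\}}(|y|)$, whose hypotheses hold precisely because $r_x\leq|x|/4$) produces the factor $r_x$ cancelling the $1/r_x$ from Lemma \ref{lemma1}, and yields the lemma with $C(d,\beta)\sim d^{2}/\beta$.
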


Note that this lemma is a radial higher-dimensional counterpart of Lemma \ref{lemma2 d=1} under the additional assumption $r_x \leq |x|/4$. This condition, together with Proposition \ref{radial1}, reduces the proof of the lemma to the one-dimensional case, provided the integration by parts argument is replaced by the estimate in Lemma \ref{lemma1}. Proposition \ref{radial1} is an elementary observation, but nevertheless crucial in order to extend the one-dimensional analysis to that of $M_\beta^I$ over radial functions.

\begin{proposition}\label{radial1}
Suppose that  $f\in L^{1}_{loc}(\R^d)$ satisfies $f(x)=F(|x|)$, $F:(0,\infty)\to[0,\infty)$, $B:=B(z,r)\subset B(0,2|z|)\setminus B(0,\frac{1}{2}|z|)\,$, and 
$a:= |z|-r$, $b:=|z|+r$. 
Then 
it holds that
\begin{equation*}
\intav_{[a,b]}F(t)\dt\,\leq C(d)\intav_{B(z,2r)}f(y)\dy\,.
\end{equation*}
\end{proposition}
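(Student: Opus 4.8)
The plan is to compare the one-dimensional average of the radial profile $F$ over the interval $[a,b]$ with the $d$-dimensional average of $f$ over a concentric but slightly enlarged ball $B(z,2r)$. The key geometric observation is that $B=B(z,r)$ sits inside the annulus $B(0,2|z|)\setminus B(0,\tfrac12|z|)$, so the radial variable $t=|y|$ ranges over an interval contained in $[\tfrac12|z|,2|z|]$ as $y$ runs over $B$. On this range the radii are all comparable to $|z|$, which means the weight $t^{d-1}$ coming from polar coordinates is bounded above and below by constants (depending only on $d$) times $|z|^{d-1}$. This is precisely the feature that lets a genuinely $d$-dimensional average degenerate into a harmless one-dimensional one.

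First I would pass to polar coordinates and write the $d$-dimensional integral over $B(z,2r)$ in terms of the radial profile $F$. The subtlety is that a ball $B(z,2r)$ that is \emph{not} centered at the origin does not correspond to a clean product region in $(t,\omega)\in(0,\infty)\times S^{d-1}$ coordinates; its intersection with each sphere $\partial B(0,t)$ is a spherical cap whose $(d-1)$-measure varies with $t$. Rather than compute this exactly, I would bound it from below: since $B(z,2r)\supset B(z,r)=B$, and since for each $t$ in the interior of the range $[a,b]=[|z|-r,|z|+r]$ the sphere $\partial B(0,t)$ meets $B(z,2r)$ in a cap of $(d-1)$-measure bounded below by $c(d)\,t^{d-1}$ (because doubling the radius of the ball guarantees a cap of at least some fixed fractional solid angle, uniformly for $t\in[a,b]$), one gets
\begin{equation*}
\int_{B(z,2r)} f(y)\dy \,=\, \int f(y)\dy \,\geq\, c(d)\int_a^b F(t)\, t^{d-1}\dt \,\geq\, c(d)\big(\tfrac12|z|\big)^{d-1}\int_a^b F(t)\dt.
\end{equation*}

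To close the argument I would normalize: $|B(z,2r)| = 2^d\omega_d r^d$, and $b-a=2r$, so dividing the displayed inequality by $|B(z,2r)|$ and comparing with $\intav_{[a,b]}F = \tfrac{1}{2r}\int_a^b F$ produces the factor $r^{d-1}/|z|^{d-1}\leq 1$ (using $r\leq\tfrac12|z|$, which follows from $B\subset B(0,2|z|)\setminus B(0,\tfrac12|z|)$ forcing $|z|-r\geq\tfrac12|z|$). All surviving constants depend only on $d$, giving the claimed bound with $C(d)$. The main obstacle I expect is the lower bound on the spherical-cap measure: one must verify that enlarging the ball from radius $r$ to radius $2r$ guarantees, for \emph{every} $t\in[a,b]$ and not merely for $t$ near $|z|$, a cap occupying a definite fraction of the sphere $\partial B(0,t)$. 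This is a uniform solid-angle estimate that should follow from an elementary geometric comparison (the point of $\partial B(0,t)$ nearest to $z$ lies well inside $B(z,2r)$, leaving room for a cap of controlled angular radius), but it is the step that genuinely uses the doubling of the radius and the annular confinement, so it deserves the most care.
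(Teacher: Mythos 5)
Your overall plan (polar coordinates, a uniform lower bound on the $(d-1)$-measure of the caps $\partial B(0,t)\cap B(z,2r)$ for $t\in[a,b]$, then normalisation) is sound, but the key geometric claim is false as stated, and with it the first displayed inequality of your argument. You assert that for every $t\in[a,b]$ the cap $\partial B(0,t)\cap B(z,2r)$ has measure at least $c(d)\,t^{d-1}$, i.e.\ that it occupies a \emph{fixed fraction of the full solid angle}. The hypotheses force $r\leq\frac12|z|$ but put no lower bound on $r/|z|$, and the ball $B(z,2r)$ subtends at the origin a solid angle of order $(r/|z|)^{d-1}$; hence the cap has measure of order $t^{d-1}(r/|z|)^{d-1}\sim r^{d-1}$, not $t^{d-1}$, and no constant $c(d)$ can work when $r/|z|$ is small. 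Concretely, taking $F\equiv 1$ your displayed inequality would give
\begin{equation*}
2^d\omega_d r^d=\int_{B(z,2r)}f(y)\dy\ \geq\ c(d)\int_a^b t^{d-1}\dt\ \geq\ c(d)\,2r\big(\tfrac12|z|\big)^{d-1},
\end{equation*}
which forces $r\gtrsim_d |z|$ and so fails for $r/|z|$ small enough.

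The good news is that the error is exactly compensated by the slack you discard at the very end (the factor $r^{d-1}/|z|^{d-1}\leq 1$), so your proof is repairable rather than wrong in spirit. The correct uniform estimate is: for $t\in[a,b]$ the point $tz/|z|$ lies within distance $|t-|z||\leq r$ of $z$, and writing $y=t\omega$ with $\theta$ the angle between $\omega$ and $z$, the identity $|y-z|^2=(t-|z|)^2+2t|z|(1-\cos\theta)\leq r^2+t|z|\theta^2$ together with $t\leq\frac32|z|$ shows that the whole cap of angular radius $\theta_0=r/|z|$ about $tz/|z|$ is contained in $B(z,2r)$; since also $t\geq\frac12|z|$, its measure is at least $c(d)\,t^{d-1}\theta_0^{d-1}\geq c(d)2^{-(d-1)}r^{d-1}$. (This is where doubling the radius is genuinely used: at $t=a$ and $t=b$ the sphere $\partial B(0,t)$ meets $\overline{B(z,r)}$ in a single point, so the cap bound would be vacuous for the undilated ball.) Inserting this bound gives $\int_{B(z,2r)}f\geq c(d)\,r^{d-1}\int_a^b F(t)\dt$, and dividing by $|B(z,2r)|=2^d\omega_d r^d$ and by $b-a=2r$ yields $\intav_{[a,b]}F\dt\leq C(d)\intav_{B(z,2r)}f\dy$ with nothing to spare: the scaling $r^{d-1}\cdot r/r^{d}\sim1$ is exactly critical, which is why only the bound of order $r^{d-1}$ (and not any weaker power) closes the argument. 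As written, though, your proof asserts a false inequality precisely at the step you yourself flagged as the crux, so it needs this correction before it stands.
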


\section{Boundedness for $d>1$}\label{sec:main theorem}

This section is devoted to the proof of the estimate \eqref{eq:sobolev bound}. We will examine the different possible cases that arise depending on: the size of good radii $r_x$ relative to $|x|$, the direction of $\nabla M_\beta f(x)$ and the size of auxiliary good radii $\widetilde{r}_x$ for the non-centered $\widetilde{M}_\beta f(x)$. A sketch of the proof is provided in Figure 1 for the reader's convenience.

\begin{figure}[h]\label{arbol}

\tikzstyle{level 1}=[level distance=3.5cm, sibling distance=3.5cm]
\tikzstyle{level 2}=[level distance=3.5cm, sibling distance=2cm]
\tikzstyle{level 3}=[level distance=3.5cm, sibling distance=2cm]

\tikzstyle{bag} = [text width=4em, text centered]
\tikzstyle{end} = [circle, minimum width=3pt,fill, inner sep=0pt]

\begin{tikzpicture}[grow=right, sloped]
\node[bag] {$\nabla M_\beta f(x)$}
    child {
        node[bag] {$\Omega_2$: Lemma \ref{lemma radios grandes}}        
            child {
                node[bag] {$\Omega_2^+$}
                child{
                    node[end, label=right:
                        { $\approx$ $\Omega_2^-$ }] {}
                    edge from parent
                    node[below] {$\widetilde{r}_x \geq |x|/4$}
                }
                child{
                    node[end, label=right:
                        {$ \approx \Omega_2^- + \Omega_1$}] {}
                    edge from parent
                    node[above] {$\widetilde{r}_x \leq |x|/4$}
                    node[below]  {}}
                    edge from parent       
            node[below]  {$\nabla M_\beta f(x) \cdot x > 0$}
            }
            child {
                node[end, label=right:
                    {$\Omega_2^-$}] {}
                edge from parent
                node[above] {$\nabla M_\beta f(x) \cdot x \leq 0$}
                node[below]  {}
            }
            edge from parent 
            node[below] {$r_x \geq |x|/4$}
    }
    child {
        node[end, label=right: {$\Omega_1$: Lemma \ref{lemma2}}]{}
        edge from parent         
            node[above] {$r_x \leq |x|/4$}
    };
\end{tikzpicture}
\caption{Scheme of the proof depending on the different cases. The notation $\approx$ means that the analysis is similar to that performed in such cases.}
\end{figure}
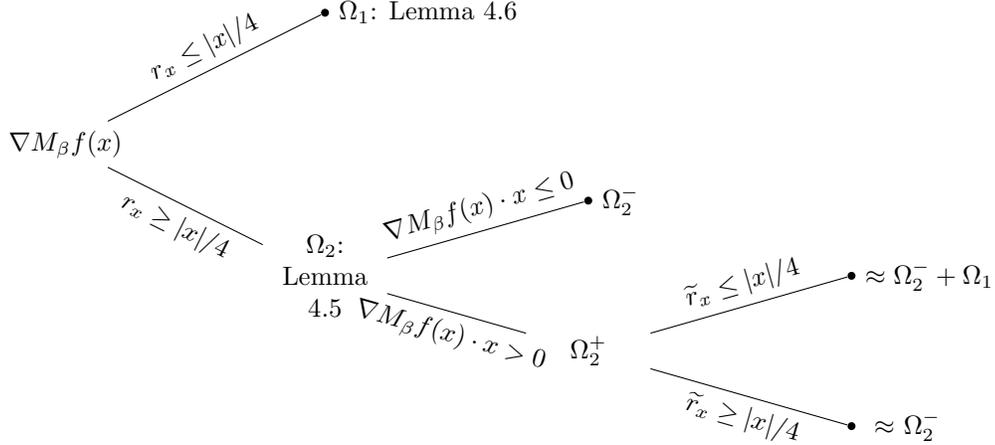

Let's turn into the proof. By Lemma \ref{basic}, $M_\beta f$ and $\widetilde M_{\beta}f$ are differentiable except on a set of measure zero. Thus, it suffices to show the bound \eqref{eq:sobolev bound} on the set
\begin{equation*}
\Omega:=\{\,x \in \R^{d}\,:\, M_{\beta}f\  \ \text{and}\ \ \widetilde M_{\beta}f \ \text{are differentiable at}\ x \ \,\, \text{and}\ \,\, \nabla M_{\beta}f(x)\not=0\,\}\,.
\end{equation*}

For each $x \in \Omega$, fix $B_x:=B(x,r_x)\in \mathcal{B}_x^\beta$ such that $r_x$ is the smallest possible good radius. Define the sets
\begin{equation*}
\Omega_1:=\{x\in \Omega\,:\, r_x\leq \frac{|x|}{4}\}\qquad
\text{and} \qquad
\Omega_2:=\{x\in \Omega\,:\, r_x> \frac{|x|}{4}\}.
\end{equation*}
Then we can estimate
\begin{align*}
\int_{\R^d}  |\nabla  M_{\beta}f(x)|^q \dx & = \int_{\Omega}\bigg|r_x^{\beta}\intav_{B_x}\nabla |f| (y) \dy\,\bigg|^q  \dx\\
&= \int_{\Omega}\!\frac{r_x^{q\beta}}{(\omega_d)^{q-1}r_x^{d(q-1)}}\bigg|\int_{B_x}\! \nabla |f| (y)\dy\bigg|^{q-1}\bigg|\intav_{B_x}\!\nabla |f|(y)\dy \bigg| \dx\\
&\lesssim \norm{\nabla f}_1^{q-1}\int_{\Omega}\bigg|\intav_{B_x}\nabla |f|(y)\dy\,\bigg|\dx\,\\
&=\norm{\nabla f}_1^{q-1}\sum_{i=1}^2\int_{\Omega_i}\bigg|\intav_{B_x}\nabla |f|(y)\dy\,\bigg|\dx\,,
\end{align*}
where we used the fact $q\beta=d(q-1)\,$. It suffices to show that 
\begin{equation*}
\int_{\Omega_i}\bigg|\intav_{B_x} \nabla |f| (y)\dy\,\bigg|\dx\,\lesssim \norm{\nabla f}_1\,\quad \text{ for }\, \, i=1,2\,.
\end{equation*}

In the case of $\Omega_1$, the bound follows using Lemma \ref{lemma2} and the same scheme as in the proof of the one-dimensional case given in Section \ref{subsec: Proof of thm 1.1}. This case is then analogous to its non-centered counterpart, already established in \cite{LM2017}. The details are ommited.

The case of $\Omega_2$ is where the analysis substantially differs from that of $\widetilde{M}_\beta$. We consider two further subcases. Define
$$
\Omega_2^+:= \{x \in \Omega_2 : \nabla M_\beta f (x) \, \cdot \, x > 0\}
$$
and
$$
\Omega_2^-:= \{x \in \Omega_2 : \nabla M_\beta f (x) \, \cdot \, x \leq 0\}.
$$
\underline{\textit{The case $\Omega_2^-$}}: By i) in Lemma \ref{lemma radios grandes} one has
\begin{align*}
    \int_{\Omega_2^-} \Big| \intav_{B_x} \nabla |f| (y) \dy \Big| \dx 
    \leq  \int_{\Omega_2^-} \intav_{B_x} |\nabla |f| (y)| \frac{|y|}{|x|} \dy \dx.
\end{align*}
Note that $|x| < 4r_x$ and $|y| \leq |x| + 2r_x \leq 6r_x$. Then
\begin{align*}
    \int_{\Omega_2^-} \frac{1}{|B_x|}  &\int_{B_x \cap \{ |y| > |x| \}} |\nabla |f| (y)| \frac{|y|}{|x|} \dy \dx\\
    &  \lesssim  \int_{\Omega_2^-} \frac{1}{|x|} \int_{B_x \cap \{ |y| > |x| \}} |\nabla|f| (y)| \frac{1}{|y|^{d-1}} \dy \dx \\
    &  =  \int_{\R^d} \frac{|\nabla |f| (y)|}{|y|^{d-1}} \int_{0}^{|y|} \frac{1}{|x|} \dx \dy \\
    &  \lesssim \int_{\R^d} |\nabla f(y)| \dy
\end{align*}
simply by Fubini and integrating in the $x$-variable. Similarly,
\begin{align*}
    \int_{\Omega_2^-} \frac{1}{|B_x|}& \int_{B_x \cap \{ |y| \leq |x| \}} |\nabla |f| (y)| \frac{|y|}{|x|} \dy \dx\\
    & \lesssim \int_{\Omega_2^-} \int_{B_x \cap \{ |y| \leq |x| \}} |\nabla |f| (y)| \frac{|y|}{|x|^{d+1}} \dy \dx \\
    & =  \int_{\R^d} |\nabla |f| (y)| |y| \int_{|y|}^{\infty} \frac{1}{|x|^{d+1}} \dx \dy \\
    & \lesssim \int_{\R^d} |\nabla f(y)| \dy
\end{align*}
which follows again by Fubini and integration in $x$. Putting the above estimates together one has that
$$
\int_{\Omega_2^-} \Big|\intav_{B_x} \nabla |f| (y) \dy \Big| \dx \lesssim \| \nabla f \|_1,
$$
as desired.

\underline{\textit{The case $\Omega_2^+$}}: 
By ii) in Lemma \ref{lemma radios grandes}, we have that
\begin{equation}\label{eq:crucial}
|\nabla  M_\beta f(x)| \leq  r_x^\beta \intav_{B_x } \nabla |f|(y)\cdot \frac{y}{|x|}\dy \, - \, \widetilde{r}_x^\beta \intav_{\widetilde{B}_x} \nabla |f| (y)  \cdot  \frac{y}{|x|} \dy \,+ \nabla \widetilde{M}_\beta f (x)  \cdot  \frac{x}{|x|}
\end{equation}
where $\widetilde{B}_x=B(z,\widetilde{r}_x) \in \widetilde{\CB}_x^\beta$ is chosen so that $\widetilde{r}_x$ is the smallest possible good radius. We will argue differently depending on the size of $\widetilde{r}_x$.

If $\widetilde r_x \leq \frac{|x|}{4}$, one may use item iv) in Lemma \ref{basic} in the above estimate  to obtain 
\begin{align*}
|\nabla & M_\beta f(x)|\\
&\leq r_x^\beta \intav_{B_x} \nabla |f|(y)\cdot \frac{y}{|x|}\dy \, +\, \widetilde{r}_x^\beta \intav_{\widetilde{B}_x} \nabla |f| (y)  \cdot  \frac{(z-y) + (x-z)}{|x|} \dy \,\\
&= r_x^\beta \intav_{B_x} \nabla |f|(y)\cdot \frac{y}{|x|}\dy \, + \, \frac{\widetilde{r}_x^\beta}{|x|} \intav_{\widetilde{B}_x} \nabla |f| (y)  \cdot  (z-y) \dy  \, +\, \nabla \widetilde{M}_\beta f (x)  \cdot  \frac{x-z}{|x|}  \\
&= r_x^\beta \intav_{B_x} \nabla |f|(y)\cdot \frac{y}{|x|}\dy \, +\, \frac{\widetilde{r}_x^\beta}{|x|} \intav_{\widetilde{B}_x} \nabla |f| (y)  \cdot  (z-y) \dy-\frac{|\nabla \widetilde M_{\beta}f(x)|\widetilde r_x}{|x|}  \\
&= r_x^\beta \intav_{B_x} \nabla |f|(y)\cdot \frac{y}{|x|}\dy \, +\, \frac{d \,  \widetilde{r}_x^\beta}{|x|} \left[\intav_{\widetilde{B}_x}  |f| -\intav_{\partial \widetilde B_x}|f|\right]-\frac{|\nabla \widetilde M_{\beta}f(x)|\widetilde r_x}{|x|},
\end{align*}
where the last equality follows from Proposition \ref{prop3}. Diving the above inequality by $r_x^\beta$ and dropping the last term one has,
$$
\Big| \intav_{B_x} \nabla |f| (y) \dy \Big| \leq \intav_{B_x} \nabla |f|(y)\cdot \frac{y}{|x|}\,\dy \, +\, \Big(\frac{ \widetilde{r}_x}{r_x} \Big)^\beta \frac{d}{|x|} \left[\intav_{\widetilde{B}_x}  |f| -\intav_{\partial \widetilde B_x}|f|\right].
$$
As $\widetilde{r}_x\leq |x|/4$, one has $(\widetilde{r}_x/r_x) \leq 1$ and
\begin{align*}
    \int_{\Omega_2^+} \Big| \intav_{B_x} \nabla |f| (y) \dy \Big| \dx 
    & \leq \int_{\Omega_2^+} \intav_{B_x} |\nabla |f| (y)| \frac{|y|}{|x|} \dy\dx\\
    &\ \ \ \ \ + d  \int_{\Omega_2^+}\frac{1}{|x|} \left[\intav_{\widetilde{B}_x}  |f (y)|\dy-\intav_{\partial \widetilde B_x}|f(y)|\dy\right]\dx\\
    & = I+ d \, II.
\end{align*}
Then, to estimate $I$ we can proceed as in the estimate for $\Omega_2^-$ and to estimate $II$ we can proceed as in the estimate for $\Omega_1$ through Lemma \ref{lemma2}, bounding $1/|x| \leq 1/|\widetilde{r}|$. This yields the desired bound on $\Omega_2^+$ when $\widetilde r_x \leq |x|/4$.

Finally, we assume $\widetilde r_x>\frac{|x|}{4}$. Consider first the case $\nabla \widetilde{M}_\beta f(x)=0$. Dividing \eqref{eq:crucial} by $r_{x}^\beta$ one has
\begin{align*}
&\int_{\Omega_2^+ \cap \{ x:  \nabla \widetilde M_\beta f(x) =0\}} 
\Big| \intav_{B_x} \nabla |f| (y) \dy \Big| \dx \\
& \qquad \qquad  \leq  \int_{\Omega_2^+}  \intav_{B_x } |\nabla |f|(y)|  \frac{|y|}{|x|}\dy  \dx \, + \,  \int_{\Omega_2^+} \frac{\widetilde{r}_x^\beta}{r_x^\beta} \intav_{\widetilde{B}_x} |\nabla |f| (y)|  \frac{|y|}{|x|} \dy \dx
\end{align*}
The first term can be bounded by $\| \nabla f \|_1$ as in $\Omega_2^-$. For the second term, note that if $y \in \widetilde{B}_x$, $|y| \leq 2\widetilde{r_x} + |x| \leq 6 \widetilde{r}_x$. This is the same situation as in the first term, except that we have the additional term $(\widetilde{r_x} / r_x)$, which cannot be ensured to be less than 1. However, it can essentially be treated in the same way using Fubini's theorem and the bounds $r_x \geq |x|/4$, $\widetilde r_x \geq |x|/4$ and $\widetilde{r}_x \geq |y|/6$,
\begin{align*}
\int_{\Omega_2^+} \frac{\widetilde r_x^\beta}{r_x^\beta} \intav_{\widetilde B_x} |\nabla |f| (y)| \frac{|y|}{|x|} \dy\dx 
&\lesssim \int_{\R^{d}}|\nabla |f|(y)||y|\int_{\{|x|\geq |y|\}}\frac{1}{|x|^{d+1}}\dx\dy\\ 
&\ \ \ \ \ +\int_{\R^{d}}\frac{|\nabla |f|(y)|}{|y|^{d-1-\beta}}\int_{\{|x|\leq |y|\}}\frac{1}{|x|^{1+\beta}}\dx\dy\\
&\lesssim\int_{\R^{d}}|\nabla f(y)|\dy.
\end{align*}
Assume next $|\nabla \widetilde M_\beta f(x)| \neq 0$. By iv) in Lemma \ref{basic} one has that $x \in \partial \widetilde B_x$, and moreover, by radiality and the non-centeredness it follows that $z=c_x x$ with either $c_x<1$ or $c_x > 1$. 

If $c_x<1$, as a consequence of iv) in Lemma \ref{basic}, 
we have that $\nabla \widetilde{M}_\beta f (x) \cdot x < 0$. Thus, that term can be dropped in \eqref{eq:crucial} and the same analysis as in $\nabla \widetilde M_\beta f(x)=0$ yields the estimate - in fact, the situation is even simpler as $\widetilde B_x \subseteq B(0, |x|)$.

If $c_x>1$, the term $\nabla \widetilde M_\beta f (x) \cdot x$ is actively contributing, and
\begin{align*}
    \int_{\Omega_2^+ \cap \{ x: c_x > 1\}} \Big| \intav_{B_x} \nabla |f| (y) \dy \Big| \dx \lesssim 
    &  \int_{\Omega_2^+ \cap \{ x: c_x > 1\}} \intav_{B_x} |\nabla |f| (y)| \frac{|y|}{|x|} \dy\dx\\
    &\ \ \ \ \ +   \int_{\Omega_2^+ \cap \{ x: c_x < 1\}} \frac{\widetilde r^{\beta}_x}{r^{\beta}_x}\intav_{\widetilde B_x} |\nabla |f| (y)| \frac{|y|}{|x|} \dy\dx\\
    &\ \ \ \ \ +   \int_{E^+_2\cap\{x: c_x>1\}}\frac{\nabla \widetilde M_{\beta}f(x)\cdot x}{|x|r^{\beta}_x}\dx.
\end{align*}
We can estimate the first two terms as in the case $|\nabla \widetilde M_\beta f (x)|=0$. The third term also follows with a similar argument, noting that
\begin{align*}
   \int_{\Omega^+_2\cap\{x:c_x>1\}}\frac{\nabla \widetilde M_{\beta}f(x)\cdot x}{|x|r^{\beta}_x}\dx&\lesssim \int_{\Omega^+_2\cap\{x: c_x>1\}}\frac{1}{r^{\beta}_x\widetilde r^{d-\beta}_{x}}\int_{\widetilde B_x}|\nabla|f|(y)|\dy\dx\\
   &\lesssim \int_{\R^{d}}\int_{\widetilde B_x\cap\{y:|y|\geq|x|\}}\frac{1}{|x|^{\beta}}\frac{|\nabla|f|(y)|}{|y|^{d-\beta}}\dy\dx\\
   &= \int_{\R^{d}}\frac{|\nabla|f|(y)|}{|y|^{d-\beta}}\int_{\{|x|\leq|y|\}}\frac{1}{|x|^{\beta}}\dx\dy\\
   &\lesssim \int_{\R^d}|\nabla|f|(y)|\dy.
\end{align*}
This yields the desired bound when $\widetilde{r}_x > |x|/4$ and concludes the proof of the endpoint Sobolev bound \eqref{eq:sobolev bound}.

\section{Continuity for $d>1$}\label{sec:continuity}

In order to conclude the proof of Theorem \ref{Main Theorem}, it remains to show that the map $f \mapsto |\nabla M_\beta f|$ is continuous from $W^{1,1}_{\mathrm{rad}}(\R^d)$ to $L^q(\R^d)$. As mentioned in the Introduction, the proof follows the strategy used by the authors \cite{BM2019} in the non-centered case, together with a new idea recently introduced by González--Riquelme \cite{GR} that allows to obtain smallness of $\nabla M_\beta f_j$ inside a small ball around the origin. In what follows we put this strategy in action; see also \cite[Theorem 25]{GR} for a similar approach.

For any radial function $f \in W^{1,1}(\R^d)$ and any sequence of radial functions $\{f_j\}_{j \in \N}$ in $W^{1,1}(\R^d)$ such that $\| f_j - f\|_{W^{1,1}(\R^d)} \to 0$ as $j \to \infty$, we want to show that
\begin{equation}\label{eq:goal}
\| \nabla M_\beta f_j - \nabla M_\beta f \|_{L^{d/(d-\beta)}(\R^d)} \to 0 \quad \textrm{as $j \to \infty$}.
\end{equation}

We first review some auxiliary results that were established in the previous work by the authors \cite{BM2019}.

\subsection{Preliminaries}\label{sec:preliminaries}

Given $f \in W^{1,1}(\R^d)$ and $\{f_j\}_{j \in \N} \subset W^{1,1}(\R^d)$ the associated families of good balls, defined in \eqref{def:good balls}, are simply denoted by $\CB_{x}^\beta$ and $\CB_{x,j}^\beta$ respectively. The families of good radii are denoted by $\CR_{x}^\beta$ and $\CR_{x,j}^\beta$.




\subsubsection{A Brézis--Lieb type reduction}\label{ae+BrezisLieb}

The classical Brézis--Lieb lemma \cite{BL1976} reduces the proof of \eqref{eq:goal} to showing that
$$
\int_{\R^d} |\nabla M_\beta f_j|^{\frac{d}{d-\beta}} \to \int_{\R^d} |\nabla M_\beta f|^{\frac{d}{d-\beta}} \quad \textrm{as $j \to \infty$}
$$
provided the almost everywhere convergence
\begin{equation}\label{eq:a.e. derivatives}
\nabla M_\beta f_j (x) \to \nabla M_\beta f(x) \quad \textrm{a.e. \:\: as $j \to \infty$}
\end{equation}
holds.




\subsubsection{Almost everywhere convergence of the derivatives}
The $\text{a.e.}$ convergence \eqref{eq:a.e. derivatives} was established in \cite{BM2019} provided the representation of the derivative \eqref{Luiro formula} holds.

\begin{lemma}[Lemma 2.4 \cite{BM2019}]\label{lemma:ae convergence derivatives}
Let $f \in W^{1,1}(\R^d)$ be a radial function and $\{f_j\}_{j \in \N} \subset W^{1,1}(\R^d)$ be a sequence of radial functions such that $\| f_j - f \|_{W^{1,1}(\R^d)} \to 0$ as $j \to \infty$. Then
\begin{equation*}
M_\beta f_j(x)  \to M_\beta f(x)\ \ \  \text{a.e.} \quad as \:\:j \to \infty,
\end{equation*}
and
\begin{equation*}
\nabla M_\beta f_j (x)  \to \nabla M_\beta f(x)  \ \ \ \text{a.e.} \quad as \:\:j \to \infty.
\end{equation*}
Moreover,
\begin{equation*}
 \nabla M^{I}_\beta f_j(x)  \to \nabla M^{I}_\beta f(x) \ \ \  \text{a.e} \quad as \:\:j \to \infty.
\end{equation*}
\end{lemma}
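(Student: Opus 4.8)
The goal is to establish the almost everywhere convergence
\begin{equation*}
\nabla M_\beta f_j(x) \to \nabla M_\beta f(x) \quad \text{a.e. as } j \to \infty,
\end{equation*}
(together with the analogous statements for $M_\beta f_j$ and $\nabla M_\beta^I f_j$) under the hypothesis $\|f_j - f\|_{W^{1,1}(\R^d)} \to 0$. The plan is to exploit the representation formula \eqref{Luiro formula} from Lemma \ref{basic}, which expresses $\nabla M_\beta f(x)$ as $r^\beta \intav_B \nabla|f|(y) \dy$ for any good ball $B = B(x,r) \in \CB_x^\beta$, together with a standard compactness-plus-stability argument on the good radii. First I would fix a point $x$ at which $M_\beta f$ and all the $M_\beta f_j$ are differentiable and at which the representation holds; this excludes only a null set. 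Since $\|f_j - f\|_1 \to 0$ trivially gives $M_\beta f_j(x) \to M_\beta f(x)$ pointwise (from $|M_\beta f_j(x) - M_\beta f(x)| \leq M_\beta(f_j - f)(x)$ and a lower bound on the relevant good radii away from the origin, as in Lemma \ref{lemma:derivative d=1} and Lemma \ref{basic}), the first convergence is immediate.

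The core step is to control the good radii. I would extract, for each $j$, a good radius $r_{x,j} \in \CR_{x,j}^\beta$ and show that any convergent subsequence of $\{r_{x,j}\}$ converges to a good radius $r_x \in \CR_x^\beta$ for the limit function $f$. The key tools are: a uniform upper bound on the good radii (from $M_\beta f_j(x) = r_{x,j}^\beta |f_j|_{B(x,r_{x,j})} \leq r_{x,j}^\beta \|f_j\|_\infty$ being impossible to maintain as $r \to \infty$ because the average decays, combined with the uniform positive lower bound $M_\beta f_j(x) \geq c > 0$ on annuli), and a uniform lower bound away from zero (since $0 \notin \CR_{x,j}^\beta$ by Lebesgue differentiation, and the averages $|f_j|_{B(x,r)}$ converge uniformly in $r$ on compact sets). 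Passing to the limit in the defining equation $M_\beta f_j(x) = r_{x,j}^\beta \intav_{B(x,r_{x,j})} |f_j|$, using $\|f_j - f\|_1 \to 0$ to control the averages, I would identify the limit radius as a genuine good radius for $f$.

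With the convergence of good radii in hand, I would insert $r_{x,j} \to r_x$ into the representation \eqref{Luiro formula}:
\begin{equation*}
\nabla M_\beta f_j(x) = r_{x,j}^\beta \intav_{B(x,r_{x,j})} \nabla|f_j|(y) \dy.
\end{equation*}
The convergence of the right-hand side to $r_x^\beta \intav_{B(x,r_x)} \nabla|f|(y)\dy = \nabla M_\beta f(x)$ then follows by splitting into the error from replacing $\nabla|f_j|$ by $\nabla|f|$ (controlled by $\|\nabla f_j - \nabla f\|_1 \to 0$, using $|\nabla|f_j| - \nabla|f|| \le |\nabla(f_j - f)|$ a.e.) and the error from moving the ball from radius $r_{x,j}$ to $r_x$ (controlled by continuity of the average in the radius, since $\nabla|f| \in L^1$). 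Because this holds along every convergent subsequence with the \emph{same} limit $\nabla M_\beta f(x)$ (uniqueness following from the differentiability hypothesis at $x$ and the representation formula), the full sequence converges. The identical scheme applies to $M_\beta^I$, replacing the full range of good radii by those constrained to $r \le |x|/4$.

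The main obstacle I anticipate is the control of the good radii, and in particular ruling out pathological behaviour where good radii could escape to infinity or collapse toward zero. The escape to infinity is handled by the decay of fractional averages combined with the continuity-based lower bound $M_\beta f(x) \geq c_n > 0$ on each annulus $A_n$, exactly as in Lemma \ref{lemma:derivative d=1}; the collapse toward zero is prevented by Lebesgue differentiation giving $0 \notin \CR_x^\beta$ together with the uniform convergence of averages. A secondary subtlety is the possible non-uniqueness of good radii for $f$ itself: even if $\CR_x^\beta$ is not a singleton, the differentiability of $M_\beta f$ at $x$ forces the representation $r^\beta \intav_{B(x,r)} \nabla|f|$ to take the \emph{same} value for all $r \in \CR_x^\beta$, so every subsequential limit of $\nabla M_\beta f_j(x)$ coincides with $\nabla M_\beta f(x)$, which is what the subsequence argument requires. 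Since this lemma is quoted as \cite[Lemma 2.4]{BM2019}, I would present this as a sketch and refer to that work for the full details.
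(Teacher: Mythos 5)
Your overall strategy is the right one, and it matches what the paper actually does: Lemma \ref{lemma:ae convergence derivatives} is not proved in this paper at all but imported from \cite[Lemma 2.4]{BM2019}, whose proof is precisely the Luiro-type scheme you describe --- stability of the good radii under $W^{1,1}$-convergence (upper bound from the decay of fractional averages, lower bound from the positivity of $M_\beta f$ on annuli), identification of subsequential limit radii as good radii for $f$, and then the representation \eqref{Luiro formula} plus the observation that all good radii of $f$ at a point of differentiability produce the same value $\nabla M_\beta f(x)$.

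There is, however, one concrete flaw in your sketch: the pointwise inequality $\big|\nabla |f_j| - \nabla |f|\big| \le \big|\nabla (f_j - f)\big|$ a.e.\ is false. Since $\nabla |g| = \sgn(g)\,\nabla g$ a.e., at points where $f_j$ and $f$ have opposite signs one gets $\big|\nabla |f_j| - \nabla |f|\big| = |\nabla f_j + \nabla f|$, which is not dominated by $|\nabla f_j - \nabla f|$; for instance, in $d=1$ with $f(x)=x$ near the origin (suitably truncated) and $f_j = f - 1/j$, one has $\big|(|f_j|)' - (|f|)'\big| = 2$ on $\{0<x<1/j\}$ while $(f_j-f)' \equiv 0$ there. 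The conclusion you need, $\|\nabla |f_j| - \nabla |f|\|_{L^1} \to 0$, is true, but it is a genuinely functional-analytic fact --- exactly Lemma \ref{lemma:convergence modulus in W11} (Lemma 2.3 of \cite{BM2019}), which the paper quotes for precisely this purpose --- and not a consequence of pointwise domination. A second, more minor imprecision: the uniform-in-$j$ lower bound on good radii does not follow from Lebesgue differentiation alone (that only gives $0 \notin \CR_{x,j}^\beta$ for each fixed $j$); it comes from the chain $c \le M_\beta f_j(x) \le r_{x,j}^\beta \sup_{B(x,r_{x,j})}|f_j|$ together with a $j$-uniform local sup bound on $f_j$ away from the origin, and it is exactly here (locally uniform control and convergence of the one-dimensional radial profiles) that the radiality hypothesis enters. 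With these two repairs, your sketch is a faithful outline of the cited proof.
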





\subsubsection{A functional analytic convergence lemma}

In view of the representation of the derivative of $M_\beta$ in \eqref{Luiro formula}, it is useful to note that convergence of $f_j$ to $f$ in $W^{1,1}(\R^d)$ implies the convergence of their modulus. A proof of this functional analytic fact can be founded in \cite{BM2019}.

\begin{lemma}[Lemma 2.3 \cite{BM2019}]\label{lemma:convergence modulus in W11}
Let $f \in W^{1,1}(\R^d)$ and $\{f_j\}_{j \in \N} \subset W^{1,1}(\R^d)$ be such that $\| f_j - f \|_{W^{1,1}(\R^d)} \to 0$ as $j \to \infty$. Then $\| |f_j| - |f| \|_{W^{1,1}(\R^d)} \to 0$ as $j \to \infty$.
\end{lemma}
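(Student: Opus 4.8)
The plan is to treat the $L^1$ convergence of the functions and of their gradients separately. The convergence $\| |f_j| - |f| \|_{L^1(\R^d)} \to 0$ is immediate from the pointwise inequality $\big| |f_j| - |f| \big| \leq |f_j - f|$, which gives $\| |f_j| - |f| \|_{L^1} \leq \| f_j - f\|_{L^1} \to 0$. The substance of the lemma therefore lies in showing the convergence of the gradients in $L^1(\R^d)$.

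For this I would invoke the standard fact (Stampacchia's theorem on composition with Lipschitz functions) that for any $g \in W^{1,1}(\R^d)$ one has $|g| \in W^{1,1}(\R^d)$ with $\nabla |g| = \sgn(g)\, \nabla g$ almost everywhere, where $\sgn(0):=0$ and, crucially, $\nabla g = 0$ almost everywhere on the level set $\{ g = 0\}$. Applying this to $f_j$ and to $f$, and adding and subtracting a term, I would decompose
\begin{equation*}
\nabla |f_j| - \nabla |f| = \sgn(f_j)(\nabla f_j - \nabla f) + \big(\sgn(f_j) - \sgn(f)\big) \nabla f =: \mathrm{I}_j + \mathrm{II}_j.
\end{equation*}
The first term is controlled trivially: since $|\sgn(f_j)| \leq 1$, one has $\| \mathrm{I}_j \|_{L^1} \leq \| \nabla f_j - \nabla f \|_{L^1} \to 0$ by hypothesis.

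The main obstacle is the second term $\mathrm{II}_j$, where the sign functions need not converge in any strong sense and $\sgn$ is discontinuous at the origin, so one cannot pass to the limit directly. Here I would argue by contradiction via subsequences combined with dominated convergence. Suppose $\| \mathrm{II}_j \|_{L^1} \not\to 0$; then there exist $\varepsilon > 0$ and a subsequence with $\| \mathrm{II}_{j_k} \|_{L^1} \geq \varepsilon$. Since $f_{j_k} \to f$ in $L^1(\R^d)$, a further subsequence converges to $f$ pointwise almost everywhere. On the set $\{ f \neq 0\}$, pointwise convergence forces $\sgn(f_{j_{k_l}})$ to eventually equal $\sgn(f)$, so the integrand $\big(\sgn(f_{j_{k_l}}) - \sgn(f)\big)\nabla f$ tends to $0$ there; on $\{ f = 0\}$ the integrand vanishes identically because $\nabla f = 0$ almost everywhere on this set. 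As the integrand is dominated by $2|\nabla f| \in L^1(\R^d)$, the dominated convergence theorem gives $\| \mathrm{II}_{j_{k_l}} \|_{L^1} \to 0$, contradicting the lower bound $\varepsilon$.

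Hence $\| \mathrm{II}_j \|_{L^1} \to 0$, and combining this with the estimate for $\mathrm{I}_j$ yields $\| \nabla |f_j| - \nabla |f| \|_{L^1(\R^d)} \to 0$; together with the $L^1$ convergence of the functions themselves this gives $\| |f_j| - |f| \|_{W^{1,1}(\R^d)} \to 0$, as claimed. The only genuinely delicate point is the handling of $\mathrm{II}_j$, and the subsequence-extraction device is precisely what allows the almost everywhere convergence of $f_{j}$ (a consequence of $L^1$ convergence) to be upgraded to $L^1$ convergence of the whole sequence $\mathrm{II}_j$.
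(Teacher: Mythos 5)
Your proof is correct and complete: the reduction of the problem to the gradient term via $\bigl||f_j|-|f|\bigr|\leq|f_j-f|$, the chain-rule identity $\nabla|g|=\sgn(g)\nabla g$ (with $\nabla g=0$ a.e.\ on $\{g=0\}$), and the subsequence-plus-dominated-convergence device for the term $\bigl(\sgn(f_j)-\sgn(f)\bigr)\nabla f$ are all sound. Note that the paper itself gives no proof of this lemma --- it is quoted from \cite{BM2019} --- and your argument is essentially the standard one given there, so there is nothing to add.
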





\subsubsection{A classical convergence result}

Finally, the following classical variant of the dominated convergence theorem will be also used in establishing \eqref{eq:goal}.

\begin{theorem}[Generalised dominated convergence theorem]\label{thm:gdct}
Let $1 \leq p < \infty$ $f, g \in L^p(\R^d)$  and $\{f_j\}_{j \in \N}$ and $\{g_j\}_{j \in \N}$ be sequences of functions on $L^p(\R^d)$ such that
\begin{enumerate}[i)]
    \item  $|f_j(x)| \leq |g_j(x)|$  a.e.,
    \item  $f_j(x) \to f(x)$ and $g_j(x) \to g(x)$ a.e. as $j \to \infty$,
    \item  $\| g_j - g \|_{L^p(\R^d)} \to 0$.
\end{enumerate}
Then $\| f_j - f \|_{L^p(\R^d)} \to 0$.
\end{theorem}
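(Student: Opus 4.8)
The plan is to follow the classical Fatou-lemma argument for the generalised dominated convergence theorem, in which one applies Fatou to a cleverly chosen nonnegative combination of the dominating and dominated sequences. First I would record the pointwise consequence of the hypotheses: letting $j \to \infty$ in i) and using ii) gives $|f| \leq |g|$ a.e., so that by the triangle inequality
\[
|f_j - f| \leq |f_j| + |f| \leq |g_j| + |g| \quad \text{a.e.}
\]
The central device is then to set $h_j := (|g_j| + |g|)^p - |f_j - f|^p$, which is nonnegative a.e. by the displayed bound, and to which Fatou's lemma may be applied.

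Next I would carry out the Fatou step. Since $f_j \to f$ and $g_j \to g$ a.e. by ii), one has $|f_j - f|^p \to 0$ and $(|g_j|+|g|)^p \to (2|g|)^p$ a.e., hence $h_j \to 2^p |g|^p$ a.e. Fatou's lemma then yields
\[
2^p \int_{\R^d} |g|^p \leq \liminf_{j \to \infty} \int_{\R^d} h_j = \liminf_{j \to \infty} \Big( \int_{\R^d} (|g_j|+|g|)^p - \int_{\R^d} |f_j - f|^p \Big).
\]
To control the first integral on the right I would invoke iii): the reverse triangle inequality gives $\big\| |g_j| - |g| \big\|_{L^p(\R^d)} \leq \| g_j - g \|_{L^p(\R^d)} \to 0$, so $|g_j| + |g| \to 2|g|$ in $L^p(\R^d)$ and, by continuity of the norm, $\int_{\R^d} (|g_j|+|g|)^p \to 2^p \int_{\R^d} |g|^p$. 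As this limit is finite (because $g \in L^p(\R^d)$), the $\liminf$ of the difference splits as a limit minus a $\limsup$, producing
\[
2^p \int_{\R^d} |g|^p \leq 2^p \int_{\R^d} |g|^p - \limsup_{j \to \infty} \int_{\R^d} |f_j - f|^p.
\]

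Cancelling the finite quantity $2^p \int_{\R^d} |g|^p$ forces $\limsup_{j} \int_{\R^d} |f_j - f|^p \leq 0$; since the integrand is nonnegative this gives $\int_{\R^d} |f_j - f|^p \to 0$, that is $\| f_j - f \|_{L^p(\R^d)} \to 0$, as desired. The one point genuinely requiring care — and the step I would flag as the main obstacle — is the legitimacy of separating $\liminf(a_j - b_j)$ into $\lim a_j - \limsup b_j$: this is valid precisely because the dominating integrals $\int_{\R^d}(|g_j|+|g|)^p$ converge to a \emph{finite} limit, which is exactly where hypothesis iii) together with the integrability of $g$ enter. Every other step is routine once the nonnegative combination $h_j$ has been introduced.
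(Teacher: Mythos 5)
Your proof is correct and follows exactly the route the paper indicates: the paper gives no argument of its own, only the remark that the statement ``follows as a consequence of Fatou's lemma'' (citing Royden), and your application of Fatou to the nonnegative functions $h_j=(|g_j|+|g|)^p-|f_j-f|^p$, together with the observation that $\int_{\R^d}(|g_j|+|g|)^p\to 2^p\int_{\R^d}|g|^p$ by hypothesis iii), is the standard way to carry that out. The point you flag---that splitting the $\liminf$ requires the dominating integrals to converge to a finite limit---is indeed the only delicate step, and you handle it correctly.
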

This follows as a consequence of Fatou's lemma; see for instance \cite[Chapter 4, Theorem 19]{royden2010real}.




\subsection{Proof of the continuity in Theorem \ref{Main Theorem}}

As it was shown by the authors in \cite{BM2019}, it suffices to show that the convergence \eqref{eq:goal} holds in any large compact set $K$. This is thanks to the following. 

\begin{proposition}[Proposition 4.10 \cite{BM2019}]\label{prop:smallness}
Let $0	<  \beta < d$, $f \in W^{1,1}(\R^d)$ and $\{f_j\}_{j \in \N} \subset W^{1,1}(\R^d)$ such that $\| f_j - f \|_{W^{1,1}(\R^d)} \to 0$. Then, for any $\varepsilon>0$ there exist a compact set $K$ and $j_\varepsilon>0$ such that
$$
\| \nabla M_\beta f_j - \nabla M_\beta f \|_{L^q( ( 3K)^c )} < \varepsilon
$$
for all $j \geq j_\varepsilon$, where $q=d/(d-\beta)$.
\end{proposition}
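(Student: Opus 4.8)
The plan is to revisit the boundedness argument of Section \ref{sec:main theorem}, now keeping track of the spatial localization in the outer variable $x$, and to combine the resulting quantitative tail bounds with the $L^1$-tightness of the gradients. First I would record two consequences of the hypothesis $\|f_j-f\|_{W^{1,1}}\to 0$: that $A:=\sup_j\|\nabla f_j\|_1<\infty$ (which also dominates $\|\nabla f\|_1$), and, via Lemma \ref{lemma:convergence modulus in W11}, that the family $\{\nabla|f_j|\}_j\cup\{\nabla|f|\}$ is uniformly tight in $L^1(\R^d)$; that is, for every $\eta>0$ there is $R_\eta$ with $\int_{|y|>R_\eta}|\nabla|f_j|(y)|\,dy<\eta$ for every $j$ (the finitely many small $j$ are handled individually, and the tail of the remaining ones by convergence of the moduli). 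Taking $K=\overline{B(0,R)}$, so that $(3K)^c=\{|x|>3R\}$, it then suffices by the triangle inequality to bound $\|\nabla M_\beta g\|_{L^q(\{|x|>3R\})}$ by a quantity tending to $0$ as $R\to\infty$, uniformly over $g$ in this bounded, tight family, and to apply the result to $g=f_j$ and $g=f$.

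To produce such a bound I would rerun the chain of estimates of Section \ref{sec:main theorem} with the outer integral restricted to $\{|x|>3R\}$. As there, the prefactor $\|\nabla g\|_1^{q-1}$ is controlled by $A^{q-1}$, so it reduces to bounding $\int_{\Omega_i\cap\{|x|>3R\}}\big|\intav_{B_x}\nabla|f|\big|\,dx$ for $i=1,2$. On $\Omega_1$ the relevant integration set is $2B_x\subset B(x,|x|/2)\subset\{|y|>3R/2\}$, so Lemma \ref{lemma2} produces directly a tail $\lesssim\int_{|y|>3R/2}|\nabla|f||$. On $\Omega_2$ I would reuse the Fubini computations essentially verbatim: in almost every term the constraint $|x|>3R$ forces the region of integration in $y$ into a tail $\{|y|>cR\}$ (this happens for the $\{|y|>|x|\}$ part of $\Omega_2^-$, for the term coming from $\widetilde M_\beta$ in the $c_x>1$ subcase, and for the $\{|y|\le|x|\}$ piece carrying the weight $|x|^{-\beta}|y|^{-(d-\beta)}$), where tightness gives smallness. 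The only terms that are not of pure tail type are those carrying the weight $|y|/|x|^{d+1}$ over $\{|y|\le|x|\}$; after Fubini these contribute, for $|y|<3R$, an extra factor $\lesssim 1/R$, and I would split the $y$-integral at $|y|=3R/C$: the inner part is $\le\tfrac{1}{C}\|\nabla g\|_1\le \tfrac{A}{C}$ thanks to the gain $|y|/R\le 1/C$, while the outer part is a tail $\int_{|y|>3R/C}|\nabla|f||$. Choosing first $C$ large depending on $\varepsilon$ and $A$, and then $R$ large depending on $C$, makes both pieces small.

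Assembling the pieces yields $\int_{\{|x|>3R\}}|\nabla M_\beta g|^q\,dx\le\omega(R)$ with $\omega(R)\to0$ uniformly over the family, whence $\|\nabla M_\beta f_j-\nabla M_\beta f\|_{L^q((3K)^c)}\le\|\nabla M_\beta f_j\|_{L^q((3K)^c)}+\|\nabla M_\beta f\|_{L^q((3K)^c)}<\varepsilon$ for $R$ large and all $j$, which is the claim. The main obstacle is the $\Omega_2$ regime, where good balls may be large and hence non-local: a far-away $x$ can have its good ball reach back toward the origin, so a tail bound is not automatic. Overcoming this requires the quantitative decay of the Fubini weights together with the near-origin/tail splitting above, and, for the genuinely centered part of the argument, the relation between $M_\beta$ and $\widetilde M_\beta$ of Lemma \ref{lemma radios grandes}, which is precisely what reduces every surviving term to a weighted integral of $\nabla|f|$ amenable to this analysis.
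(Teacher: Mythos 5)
Your proposal does not prove the proposition as stated, and the mismatch is substantive. Proposition \ref{prop:smallness} is quoted from \cite{BM2019} for arbitrary $f, f_j \in W^{1,1}(\R^d)$ --- no radiality assumption --- and for the full range $0<\beta<d$; the present paper does not reprove it, it imports it. Your argument, by contrast, runs entirely on the radial machinery of Sections \ref{sec:relation}--\ref{sec:main theorem}: Lemma \ref{basic} (a.e.\ differentiability and the representation \eqref{Luiro formula}, stated for radial $f$ and $0<\beta<1$), Lemma \ref{lemma radios grandes} and Lemma \ref{lemma2} (both stated for radial $f$), and the $\Omega_1/\Omega_2^{\pm}$ decomposition, which uses that $\nabla M_\beta f(x)$ is parallel to $x$ (item iii of Lemma \ref{basic}), again a purely radial fact. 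For non-radial $f$ with $0<\beta<1$ none of this is available: even the a.e.\ differentiability of $M_\beta f$, which you need just to make pointwise sense of the quantity being estimated, is open in that generality (this is precisely the $W^{1,1}$ problem), and there is no Section \ref{sec:main theorem}-type boundedness argument to ``rerun''. Moreover some of the Fubini integrals you reuse require $\beta$ small; for instance $\int_{\{|x|\le |y|\}}|x|^{-1-\beta}\dx$ converges only when $\beta<d-1$. So your method cannot reach the stated scope; the general case needs a genuinely different argument, as in \cite{BM2019} (e.g.\ for $1\le\beta<d$ one can exploit the pointwise bound \eqref{eq:KS} and mapping properties of $M_{\beta-1}$, a tool unavailable for $\beta<1$).

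That said, within the scope in which the paper actually invokes the proposition --- radial $f, f_j$ and $0<\beta<1$ --- your localization scheme is essentially correct and would yield a self-contained proof: the prefactor $\sup_j\|\nabla f_j\|_1^{q-1}$ is uniformly bounded; restricting $x$ to $\{|x|>3R\}$ forces $y$ into a tail in the $\Omega_1$ case (since $2B_x\subset\{|y|>|x|/2\}$) and in every $\Omega_2$ term whose Fubini weight integrates $x$ over $\{|x|\le|y|\}$; the only exceptions are the $|y|\,|x|^{-(d+1)}$ weights on $\{|y|\le|x|\}$, and your two-step choice (first $C$ large against $A=\sup_j\|\nabla f_j\|_1$, then $R$ large against $C$) handles them; the $L^1$-tightness of $\{\nabla|f_j|\}_j$ indeed follows from Lemma \ref{lemma:convergence modulus in W11}. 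One bookkeeping slip: the weight $|x|^{-\beta}|y|^{-(d-\beta)}$ coming from the $c_x>1$ term is supported on $\{|y|\ge|x|\}$, not $\{|y|\le|x|\}$; your conclusion that it is a pure tail term is correct for exactly that reason. In short: restated as ``Proposition \ref{prop:smallness} for radial functions and $0<\beta<1$'', your argument is sound and would suffice for the proof of Theorem \ref{Main Theorem}; as a proof of the proposition as written, it has the gap described above.
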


Our goal then is to establish the following.

\begin{proposition}\label{prop:convergence compact}
Let $0<\beta < 1$, $f \in W^{1,1}(\R^d)$ and $\{f_j\}_{j \in \N} \subset W^{1,1}(\R^d)$ be radial functions such that $\| f_j - f \|_{W^{1,1}(\R^d)} \to 0$. Then, for any compact set $K=\bar B(0,b)$,
\begin{equation*}\label{goal in compact}
\| \nabla M_\beta f_j - \nabla M_\beta f \|_{L^q(K)} \to 0 \qquad \textrm{as \: $j \to \infty$,}
\end{equation*}
where $q=d/(d-\beta)$.
\end{proposition}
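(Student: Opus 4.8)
The plan is to reduce the $L^q(K)$ convergence on the compact set $K=\bar B(0,b)$ to an application of the Generalised Dominated Convergence Theorem (Theorem \ref{thm:gdct}), using the representation of the derivative \eqref{Luiro formula} as the common thread. By Lemma \ref{lemma:ae convergence derivatives}, we already have the almost everywhere convergence $\nabla M_\beta f_j(x)\to\nabla M_\beta f(x)$, so the pointwise hypothesis of Theorem \ref{thm:gdct} is in hand with $f_j:=\nabla M_\beta f_j$ and target $\nabla M_\beta f$. What remains is to produce a suitable dominating sequence $g_j$, with $|\nabla M_\beta f_j|\le g_j$ a.e., such that $g_j\to g$ a.e.\ and $\|g_j-g\|_{L^q(K)}\to 0$; the conclusion $\|\nabla M_\beta f_j-\nabla M_\beta f\|_{L^q(K)}\to 0$ then follows immediately.

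The construction of the dominating sequence is where I would invest most of the work, and it is the natural place to deploy the case analysis from Section \ref{sec:main theorem}. First I would split $K$ (away from the origin) into the regions $\Omega_1$ and $\Omega_2$ governed by whether the smallest good radius $r_{x,j}$ satisfies $r_{x,j}\le |x|/4$ or $r_{x,j}>|x|/4$, exactly as in the boundedness proof, but now with the balls taken for $f_j$. On each region, the estimates from Lemma \ref{lemma2} (for $\Omega_1$) and Lemma \ref{lemma radios grandes} together with the $\Omega_2^\pm$ analysis (for $\Omega_2$) express $|\nabla M_\beta f_j(x)|$ pointwise in terms of integral averages of $|\nabla|f_j||$ against explicit kernels. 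The idea is to let $g_j$ be built from these same majorants but with $|\nabla f_j|$ replaced throughout by a quantity that is manifestly convergent in $L^q(K)$: since $\||f_j|-|f|\|_{W^{1,1}}\to 0$ by Lemma \ref{lemma:convergence modulus in W11}, the functions $|\nabla|f_j||$ converge to $|\nabla|f||$ in $L^1$, and the linear operators appearing as majorants (averaging operators of the type controlled in Section \ref{sec:main theorem}) map $L^1$ boundedly into $L^q(K)$. Thus $g_j$ defined as the corresponding majorant applied to $|\nabla|f_j||$ converges in $L^q(K)$ to the majorant applied to $|\nabla|f||$, supplying hypothesis iii) of Theorem \ref{thm:gdct}.

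The delicate point, and what I expect to be the main obstacle, is the behaviour near the origin. The kernels appearing in the $\Omega_2$ estimates carry factors like $1/|x|$, and although the global boundedness proof shows these are integrable after applying Fubini, the dominating functions $g_j$ need not converge in $L^q$ of a full neighbourhood of $0$ in a uniformly controlled way, because the good radii and the direction of $\nabla M_\beta f_j$ can behave erratically as $x\to 0$. This is precisely the difficulty resolved by the new idea of González--Riquelme \cite{GR}: one shows that $\nabla M_\beta f_j$ is uniformly small in $L^q$ inside a small ball $B(0,\rho)$ around the origin, uniformly in $j$ for $j$ large. I would therefore first fix $\varepsilon>0$, use the González--Riquelme-type smallness to discard a ball $B(0,\rho)$ on which $\|\nabla M_\beta f_j\|_{L^q(B(0,\rho))}$ and $\|\nabla M_\beta f\|_{L^q(B(0,\rho))}$ are both $\lesssim \varepsilon$ for all large $j$, and then run the dominated-convergence argument on the annular compact set $K\setminus B(0,\rho)$, where the kernels $1/|x|$ are bounded and all majorants are genuinely in $L^q$ with convergent norms.

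On the annulus $K\setminus B(0,\rho)$, the argument closes cleanly: combining the two regions gives an explicit $g_j\ge |\nabla M_\beta f_j|$ with $g_j\to g$ a.e.\ and $\|g_j-g\|_{L^q(K\setminus B(0,\rho))}\to 0$, so Theorem \ref{thm:gdct} yields $\|\nabla M_\beta f_j-\nabla M_\beta f\|_{L^q(K\setminus B(0,\rho))}\to 0$. Choosing $\rho$ via the smallness proposition and letting $\varepsilon\to 0$ through a diagonal argument over shrinking $\rho$ and growing $j$ then upgrades this to convergence on all of $K$, establishing Proposition \ref{prop:convergence compact}. The only genuinely new ingredient beyond the scheme of \cite{BM2019} is the origin-smallness input from \cite{GR}; the rest is a careful transcription of the boundedness case analysis into the language of dominating sequences.
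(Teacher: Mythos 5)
Your overall decomposition --- a small ball around the origin handled by a Gonz\'alez--Riquelme-type smallness argument, plus an annulus handled by the generalised dominated convergence theorem --- is exactly the paper's scheme (Lemmas \ref{bolita} and \ref{anillo}). The genuine gap is in your construction of the dominating sequence on the annulus. The majorants you extract from Section \ref{sec:main theorem} (Lemma \ref{lemma2} and Lemma \ref{lemma radios grandes}) are not fixed linear operators applied to $|\nabla |f_j||$: they are built from the good balls $B_{x,j}$, $\widetilde{B}_{x,j}$ and the level sets $E_{x,j}$, all of which depend on $f_j$ itself and vary with $j$. Consequently neither the a.e.\ convergence $g_j \to g$ nor, more importantly, the norm convergence $\| g_j - g\|_{L^q(K)} \to 0$ follows from $\| |\nabla |f_j|| - |\nabla |f|| \|_{L^1} \to 0$ via ``boundedness of averaging operators from $L^1$ to $L^q(K)$'': there is no fixed operator to which such a statement could apply, and proving $L^q$ convergence of these $j$-dependent, ball-dependent quantities is essentially the same difficulty as the proposition itself. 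Note also that Section \ref{sec:main theorem} controls those majorants only after a Fubini argument, in an $L^1$ sense in $x$ with the factor $\| \nabla f\|_1^{q-1}$ pulled out beforehand; this is not an $L^1 \to L^q$ operator bound. The paper avoids this circularity on the annulus $A(a,b)$ by a much cruder domination,
\begin{equation*}
|\nabla M_\beta f_j(x)| \leq |\nabla M_\beta^I f_j(x)| + \frac{1}{\omega_d (a/4)^{d-\beta}} \int_{\R^d} |\nabla |f_j|(y)| \dy,
\end{equation*}
where the constant term handles all good radii $r_{x,j} \geq |x|/4 \geq a/4$ via \eqref{Luiro formula}, and the first term handles radii $\leq |x|/4$. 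The point is that the norm convergence of this dominating sequence is \emph{inherited} from previously established results: $\nabla M_\beta^I f_j \to \nabla M_\beta^I f$ in $L^q(A(a,b))$ by \cite[Remark 4.9]{BM2019}, and the constant converges by Lemma \ref{lemma:convergence modulus in W11}. Your proposal never invokes the known convergence for the auxiliary operator $M_\beta^I$, and without it (or a proof of it) the small-radius points on the annulus cannot be dominated by anything with convergent $L^q$ norm.

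A second, lesser gap: you invoke the uniform smallness of $\| \nabla M_\beta f_j \|_{L^q(B(0,\rho))}$ for large $j$ as a black box attributed to \cite{GR}, but for $M_\beta$ this is a statement that must be proved, and it constitutes one of the two main components of the paper's argument (Lemma \ref{bolita}). The mechanism is concrete: fix $\lambda$ large, then $\delta$ small so that $\int_{B(0,(\lambda+1)\delta)} |\nabla f_j| < \epsilon$ for $j$ large; on the set of $x \in B(0,\delta)$ admitting a good radius $r_{x,j} \geq \lambda \delta$, the representation \eqref{Luiro formula} gives a pointwise bound producing a gain of $\lambda^{-d}$, while on the complementary set the good ball lies inside $B(0,(\lambda+1)\delta)$, so one may replace $f_j$ by $f_j \chi_{B(0,(\lambda+1)\delta)}$ and apply the already-proven Sobolev bound \eqref{eq:sobolev bound} to the truncated function. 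Without this (or an equivalent argument), your proof is incomplete at the origin as well.
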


Note that the convergence $\| \nabla M_\beta f_j - \nabla M_\beta f \|_{L^q(\R^d)} \to 0$ trivially follows combining the above propositions.

In order to prove Proposition \ref{prop:convergence compact}, write $K=B(0,a) \cup A(a,b)$, where $A(a,b)$ denotes the annulus $A(a,b):= \{ x \in \R^d : a \leq |x| \leq b\}$. We study the convergence on each region independently.

\begin{lemma}\label{anillo}
Let $0<\beta < 1$, $f \in W^{1,1}(\R^d)$ and $\{f_j\}_{j \in \N} \subset W^{1,1}(\R^d)$ be radial functions such that $\| f_j - f \|_{W^{1,1}(\R^d)} \to 0$. Then, for any $0<a<b<\infty$,
\begin{equation*}
\| \nabla M^{}_\beta f_j - \nabla M^{}_\beta f \|_{L^q(A(a,b))} \to 0 \qquad \textrm{as \: $j \to \infty$,}
\end{equation*}
where $q=d/(d-\beta)$.
\end{lemma}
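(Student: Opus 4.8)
The plan is to deduce the $L^q$ convergence on the annulus from the almost everywhere convergence $\nabla M_\beta f_j(x)\to\nabla M_\beta f(x)$ (already granted by Lemma \ref{lemma:ae convergence derivatives}) by means of the generalised dominated convergence theorem, Theorem \ref{thm:gdct}, applied with $p=q$ and with $\nabla M_\beta f_j$, $\nabla M_\beta f$ in the roles of $f_j$, $f$. The whole task then reduces to producing a dominating sequence: non-negative functions $g_j\in L^q(A(a,b))$ with $|\nabla M_\beta f_j(x)|\le g_j(x)$ for a.e. $x\in A(a,b)$ and large $j$, such that $g_j\to g$ both a.e. and in $L^q(A(a,b))$ for some $g\in L^q(A(a,b))$.

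The decisive preliminary step --- and the feature that makes the annulus (as opposed to a neighbourhood of the origin) tractable --- is that on $A(a,b)$ the good radii are bounded away from $0$ and $\infty$, uniformly in $x$ and in large $j$. For the upper bound, since $\|f_j-f\|_1\to0$ and (discarding the trivial case $f\equiv0$) $\int_{B(0,b)}|f|>0$, one gets $M_\beta f_j(x)\ge c>0$ for all $x\in A(a,b)$ and large $j$, which combined with $M_\beta f_j(x)\le \omega_d^{-1}\|f_j\|_1\, r_{x,j}^{\beta-d}$ forces $r_{x,j}\le R$. For the lower bound, a radial $W^{1,1}$ function is bounded on any annulus, and $\|f_j-f\|_{W^{1,1}}\to0$ makes the $f_j$ uniformly bounded on a slightly larger annulus; since $M_\beta f_j(x)=r_{x,j}^\beta\intav_{B_{x,j}}|f_j|\le r_{x,j}^\beta \|f_j\|_{L^\infty}$ whenever $B_{x,j}$ stays in that annulus, the bound $M_\beta f_j(x)\ge c$ yields $r_{x,j}\ge r_0>0$. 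The same two-sided control holds for the non-centered good radii $\widetilde r_{x,j}$ using $M_\beta f_j\le\widetilde M_\beta f_j\le 2^{d-\beta}M_\beta f_j$. Consequently every relevant ball lies in a fixed $B(0,\rho)$ and all the weights $|y|/|x|$, $r_{x,j}^\beta$, $(\widetilde r_{x,j}/r_{x,j})^\beta$ occurring below are bounded by constants depending only on $a,b,d,\beta,R,r_0$.

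With this in hand I would build $g_j$ following the case split of Section \ref{sec:main theorem}. When $r_{x,j}\le|x|/4$, Lemma \ref{lemma2} bounds $|\intav_{B_{x,j}}\nabla|f_j|\,\dy|$ by a multiple of $\intav_{2B_{x,j}}|\nabla f_j|$; when $\nabla M_\beta f_j(x)\cdot x\le0$, part i) of Lemma \ref{lemma radios grandes} bounds it by $\intav_{B_{x,j}}|\nabla f_j|\,|y|/|x|$; and when $\nabla M_\beta f_j(x)\cdot x>0$, part ii) of the same lemma bounds $|\nabla M_\beta f_j(x)|$ by averages of $|\nabla f_j|$ over $B_{x,j}$ and $\widetilde B_{x,j}$ against bounded weights plus $|\nabla\widetilde M_\beta f_j(x)|$. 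Since the radii are bounded below by $r_0$, each average $\intav_{B(x,s)}|\nabla f_j|$ with $s\le\rho$ is dominated by $\omega_d^{-1}r_0^{-d}(|\nabla f_j|*\chi_{B(0,\rho)})(x)$; thus in every case
\begin{equation*}
|\nabla M_\beta f_j(x)|\le C\big[(|\nabla f_j|*\chi_{B(0,\rho)})(x)+|\nabla\widetilde M_\beta f_j(x)|\big]=:g_j(x).
\end{equation*}
The convolution term converges, even uniformly, to $|\nabla f|*\chi_{B(0,\rho)}$ because $|\nabla f_j|\to|\nabla f|$ in $L^1(\R^d)$ (a consequence of $\|f_j-f\|_{W^{1,1}}\to0$), and it converges in $L^q$ by Young's inequality $\| |\nabla f_j|*\chi_{B(0,\rho)}\|_q\le \| |\nabla f_j|\|_1 \|\chi_{B(0,\rho)}\|_q$; the term $|\nabla\widetilde M_\beta f_j|$ converges a.e. and in $L^q$ by the radial non-centered continuity result of \cite{BM2019}. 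Hence $g_j\to g$ a.e. and in $L^q(A(a,b))$, and Theorem \ref{thm:gdct} gives the claim.

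The main obstacle is conceptual rather than computational, and it is precisely the endpoint nature of the problem: globally there is no $L^q$ majorant for $\nabla M_\beta f_j$, as the only elementary pointwise bound, $|\nabla M_\beta f_j|\le M_\beta(|\nabla f_j|)$, lands merely in weak-$L^q$. The argument circumvents this by exploiting that on a fixed annulus the good radii are bounded below, which converts each of the refined estimates of Section \ref{sec:main theorem} into a genuine convolution majorant that is $L^q$-convergent; verifying these uniform two-sided radius bounds (and their analogues for $\widetilde r_{x,j}$) is the technical heart. The one genuinely non-elementary input that cannot be absorbed into a convolution --- the non-centered derivative term surfacing in part ii) of Lemma \ref{lemma radios grandes} --- is handled by importing the already established non-centered continuity, which is exactly the payoff of relating $M_\beta$ and $\widetilde M_\beta$ at the derivative level.
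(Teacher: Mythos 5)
Your proof is correct and follows the paper's outer skeleton---almost everywhere convergence of $\nabla M_\beta f_j$ (Lemma \ref{lemma:ae convergence derivatives}) combined with the generalised dominated convergence theorem (Theorem \ref{thm:gdct})---but your dominating sequence is genuinely different from the paper's. The paper splits according to whether $r_{x,j}\le |x|/4$ or $r_{x,j}>|x|/4$: in the first case $M_\beta f_j(x)=M^I_\beta f_j(x)$ and the derivatives agree, while in the second the representation formula \eqref{Luiro formula} bounds $|\nabla M_\beta f_j(x)|$ by the constant $\omega_d^{-1}(a/4)^{\beta-d}\|\nabla|f_j|\|_1$; hence its majorant is $|\nabla M^I_\beta f_j|$ plus a constant, whose $L^q(A(a,b))$-convergence is imported from \cite[Remark 4.9]{BM2019} and Lemma \ref{lemma:convergence modulus in W11} (constants are admissible because the annulus has finite measure). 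You instead establish uniform two-sided bounds $0<r_0\le r_{x,j}\le R<\infty$ on the good radii (upper bound from $M_\beta f_j\ge c>0$, lower bound from the uniform $L^\infty$ control of radial $W^{1,1}$ functions on annuli---both arguments are sound for $d>1$), and then majorize by a convolution plus $|\nabla \widetilde M_\beta f_j|$, importing the full non-centered continuity theorem of \cite{BM2019}. This works, but note two things. First, your proof carries redundant machinery: once $r_0\le r_{x,j}\le R$ is known, the case analysis via Lemmas \ref{lemma2} and \ref{lemma radios grandes}, and with it the entire $\widetilde M_\beta$ term, is unnecessary, because \eqref{Luiro formula} alone already gives
\[
|\nabla M_\beta f_j(x)|=\Big|r_{x,j}^{\beta}\intav_{B_{x,j}}\nabla|f_j|(y)\dy\Big|\le \frac{R^{\beta}}{\omega_d r_0^{d}}\,\big(|\nabla f_j|\ast\chi_{B(0,R)}\big)(x),
\]
so the convolution majorant suffices and the appeal to the non-centered theorem can be dropped. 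Second, a small slip: if $f$ vanishes on $B(0,b)$ but $f\not\equiv 0$, then $\int_{B(0,b)}|f|>0$ fails; fix this by choosing $\rho_0$ with $\int_{B(0,\rho_0)}|f|>0$ and using the balls $B(x,b+\rho_0)$ in the lower bound for $M_\beta f_j$. Modulo these points your argument is complete; what the paper's route buys is avoiding the radius lower bound and any reference to $\widetilde M_\beta$, at the price of invoking the convergence result for the restricted operator $M^I_\beta$ on annuli.
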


\begin{proof}
Set $f_0=f$, and let $E_j$ be the set of measure zero for which Lemma \ref{basic}  fails for $f_j$. The set $E:=\cup_{j\geq 0} E_j$ continues to have measure zero. Moreover, let $F$ denote the set of measure zero for which Lemma \ref{lemma:ae convergence derivatives} fails. Note that for all $x \in A(a,b)\setminus{E \cup F}$,
$$
|\nabla M_\beta f_j (x)| \leq |\nabla M_\beta^I f_j (x)| + \frac{1}{ \omega_d (a/4)^{d-\beta}} \int_{\R^d}  |\nabla |f_j|(y)|\dy.
$$
Clearly, $M_\beta f (x) = M_\beta^I f(x)$ if $r_x \in \CR_x^\beta$ is such that $r_x \leq |x|/4$, whilst if $r_x \geq |x|/4 \geq a/4$ the inequality simply follows from Lemma \ref{basic}. In \cite[Remark 4.9]{BM2019}  it was shown that $\| \nabla M_\beta^I f_j - \nabla M_\beta^I f \|_{L^q(A(a,b))} \to 0$, and by Lemma \ref{lemma:ae convergence derivatives}, one has $ \nabla M_\beta^I f_j(x) \to \nabla M_\beta^I f(x)$. Moreover, note that Lemma \ref{lemma:convergence modulus in W11} implies
$$
\frac{1}{\omega_d (a/4)^{d-\beta}} \int_{\R^d}  |\nabla |f_j|(y)|\dy \to \frac{1}{\omega_d (a/4)^{d-\beta}} \int_{\R^d}  |\nabla |f|(y)|\dy
$$
as $j \to 0$. Consequently, the norm convergence also follows in $A(a,b)$ because this is a compact set. Therefore, the result follows from the a.e. convergence $\nabla M_\beta f_j \to \nabla M_\beta f$ ensured by Lemma \ref{lemma:ae convergence derivatives} and the Generalised dominated convergence theorem (Theorem \ref{thm:gdct}).
\end{proof}

We next show that there exists a small neighbourhood around the origin for which there is convergence. To this end, we use an idea recently introduced by González--Riquelme in \cite{GR}.

\begin{lemma}\label{bolita}
Let $0<\beta < 1$, $f \in W^{1,1}(\R^d)$ and $\{f_j\}_{j \in \N} \subset W^{1,1}(\R^d)$ radial functions such that $\| f_j - f \|_{W^{1,1}(\R^d)} \to 0$. Then, for every $\epsilon>0$ there exists an $\delta>0$ and $j_\epsilon > 0$ such that
\begin{equation*}
\| \nabla M^{}_\beta f_j - \nabla M^{}_\beta f \|_{L^q(B(0,\delta))}<\epsilon \ \ \quad  \text{for all} \, \ j\geq j_\epsilon,  
\end{equation*}
where $q=d/(d-\beta)$.
\end{lemma}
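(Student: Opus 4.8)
The plan is to establish a uniform-in-$j$ smallness bound for $\nabla M_\beta f_j$ on a small ball around the origin, which combined with the corresponding bound for $f$ and the triangle inequality yields the claim. The key point, following González--Riquelme, is that near the origin the fractional factor $r^\beta$ forces the good radii to be large, so that the relevant balls $B(x,r_x)$ capture a fixed positive amount of the total mass of $f_j$; this will let me control $\nabla M_\beta f_j(x)$ by global quantities that are small when $|x|$ is small.

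First I would recall the representation $\nabla M_\beta f_j(x) = r_x^\beta \intav_{B_x} \nabla|f_j|(y)\dy$ from \eqref{Luiro formula}, valid off a set of measure zero by Lemma \ref{basic}. For $x \in B(0,\delta)$ I would split according to whether $r_x \leq |x|/4$ or $r_x > |x|/4$. In the first regime $M_\beta f_j(x) = M_\beta^I f_j(x)$, and the convergence $\|\nabla M_\beta^I f_j - \nabla M_\beta^I f\|_{L^q} \to 0$ from \cite[Remark 4.9]{BM2019} together with absolute continuity of the integral handles the contribution once $\delta$ is small. The genuinely new regime is $r_x > |x|/4$: here I would bound $r_x^\beta |B_x|^{-1} \leq C |x|^{-(d-\beta)}$ and note $|\nabla M_\beta f_j(x)| \leq r_x^\beta \intav_{B_x}|\nabla|f_j||$. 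The main obstacle is that a naive bound gives $\int_{B_x}|\nabla|f_j||$, which need not be small; the remedy is to observe that since $r_x > |x|/4$ is a \emph{good} radius and $x$ is near the origin, the ball $B_x$ is essentially centered near $0$ at a definite scale, so I would integrate $|x|^{-(d-\beta)}$ against $\int_{B_x}|\nabla|f_j||$ over $x \in B(0,\delta)$ and apply Fubini, exploiting $r_x \gtrsim |x|$ to produce an $L^q$ bound on $\nabla M_\beta f_j$ restricted to $B(0,\delta)$ that is controlled by $\||\nabla|f_j||\,\|_{L^1}$ times a factor $\to 0$ with $\delta$.

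Concretely, I expect the estimate to run: for $x \in B(0,\delta)$ with $r_x > |x|/4$ one has, after integrating and using Fubini together with $r_x \sim |x|$ and the containment of $B_x$ in a fixed dilate,
\begin{align*}
\int_{B(0,\delta)} |\nabla M_\beta f_j(x)|^q \dx
&\lesssim \int_{B(0,\delta)} \Big( \frac{1}{|x|^{d-\beta}} \int_{B_x} |\nabla|f_j|(y)| \dy \Big)^q \dx \\
&\lesssim \Big( \int_{B(0,c\delta)} |\nabla|f_j|(y)| \dy \Big)^{q} ,
\end{align*}
where the convergence $\||f_j| - |f|\|_{W^{1,1}} \to 0$ from Lemma \ref{lemma:convergence modulus in W11} guarantees that $\int_{B(0,c\delta)}|\nabla|f_j||$ is uniformly small for $\delta$ small and $j$ large by absolute continuity and equiintegrability. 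Summing the two regimes and subtracting the identical bound for $f$, the triangle inequality then yields $\|\nabla M_\beta f_j - \nabla M_\beta f\|_{L^q(B(0,\delta))} < \epsilon$ for all $j \geq j_\epsilon$.

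The hard part will be making rigorous the claim that a good radius $r_x > |x|/4$ for a point near the origin keeps $B_x$ at a controlled scale comparable to $|x|$ (rather than arbitrarily large), since a priori $r_x$ could be enormous; I would address this by using the global integrability of $f_j$ together with the definition of $M_\beta$ to bound $r_x$ from above in terms of $\|f_j\|_1$ and $M_\beta f_j(x)$, and by the uniform equiintegrability of $\{\nabla|f_j|\}$ near the origin to absorb the remaining error. This equiintegrability, a consequence of $W^{1,1}$-convergence via Lemma \ref{lemma:convergence modulus in W11}, is what makes the smallness uniform in $j$.
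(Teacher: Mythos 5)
Your decomposition at $r_x\lessgtr|x|/4$ is where the argument breaks, and the difficulty you flag as ``the hard part'' is not a technical point to be patched but a false claim: for $x$ near the origin, a good radius $r_x>|x|/4$ is in general \emph{not} comparable to $|x|$, and $B_x$ is not contained in any fixed dilate $B(0,c\delta)$. If $f_j$ is a fixed bump supported near the sphere $\{|y|=1\}$, then for every $x\in B(0,\delta)$ the supremum defining $M_\beta f_j(x)$ is attained only at radii $r_x\approx 1$; the upper bound you propose, $r_x^{d-\beta}\leq \|f_j\|_1/(\omega_d M_\beta f_j(x))$, is of unit order on $B(0,\delta)$, not of order $\delta$. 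Hence the good balls capture the \emph{global} gradient mass, and neither equiintegrability near the origin nor Lemma \ref{lemma:convergence modulus in W11} can make $\int_{B_x}|\nabla|f_j||$ small. The weight produced by your split is also fatal on its own: $r_x>|x|/4$ only gives $\big(r_x^\beta/|B_x|\big)^q\lesssim |x|^{-(d-\beta)q}=|x|^{-d}$, and $\int_{B(0,\delta)}|x|^{-d}\dx=\infty$, so with the only available bound $\int_{B_x}|\nabla|f_j||\leq\|\nabla f_j\|_1$ your regime-two integral cannot even be shown finite this way. A Fubini refinement using $r_x\gtrsim\max\{|x|,|x-y|\}\gtrsim|y|$ improves this only to a weight of size $1+\log_{+}(\delta/|y|)$ against $|\nabla|f_j|(y)|$, and there exist $W^{1,1}$ functions for which that logarithmically weighted integral diverges; so smallness cannot be extracted along this route.

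The paper's proof uses two devices that are absent from your proposal. First, the radius threshold is taken \emph{uniform in $x$}: fix a large parameter $\lambda$ and split $B(0,\delta)$ according to whether some good radius satisfies $r_{x,j}\geq\lambda\delta$ or $r_{x,j}<\lambda\delta$. In the first case one only needs $r_{x,j}^{-d}\leq(\lambda\delta)^{-d}$, so integrating over $B(0,\delta)$ (volume $\omega_d\delta^d$) yields the factor $\lambda^{-d}\|\nabla f_j\|_1^{q}$, made small by choosing $\lambda$ large depending on $\epsilon$ and $\|\nabla f\|_1$ --- no locality of the good ball is needed at all. Second, in the complementary case the good ball genuinely is local, $B_{x,j}\subset B(0,(\lambda+1)\delta)$, hence $M_\beta f_j=M_\beta\big(f_j\chi_{B(0,(\lambda+1)\delta)}\big)$ at such points, and one applies the already-proved endpoint bound \eqref{eq:sobolev bound} to this truncated radial function, whose gradient has $L^1$ norm $<\epsilon$ on $B(0,(\lambda+1)\delta)$ for $j\geq j_\epsilon$; this localization-plus-global-Sobolev-bound step is González--Riquelme's idea. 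A further, secondary, issue: your small-radius regime invokes \cite[Remark 4.9]{BM2019} for convergence of $\nabla M_\beta^I f_j$, but the paper only uses that result on annuli $A(a,b)$ with $a>0$, so its validity on a neighbourhood of the origin would require separate justification.
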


\begin{proof}
Fix $\lambda>0$ to be chosen later. Then, for every $\epsilon>0$ there exist $\delta:=\delta_{\epsilon, \lambda}>0$ and $j_{\epsilon}>0$ such that
$$
\int_{B(0,(\lambda+1)\delta)}|\nabla f|<\epsilon
 \qquad \text{and} \qquad \int_{B(0,(\lambda+1)\delta)}|\nabla f_j|<\epsilon
$$
for every $j\geq j_{\epsilon}$. For each $j \in \N$, define the sets
$$
E^{j}_1:=\{x\in B(0,\delta) \, : \, \exists \,   r_{x,j} \in \CR_{x, j}^\beta \, \,\, \text{with}\, \, \,  r_{x,j} \geq \lambda\delta\},
$$
and
$$
E^{j}_2:=\{x\in B(0,\delta) \, : \,  \exists \,  r_{x,j} \in \CR_{x, j}^\beta \, \, \, \text{with} \, \,\,   r_{x,j}< \lambda\delta\};
$$
the sets $E_1$ and $E_2$ are defined similarly with respect to the function $f$. 

On the one hand, in $E^{j}_1$, the lower bound on the radius yields
\begin{align*}
\int_{B(0,\delta)\cap E^{j}_1}|\nabla M_{\beta}f_j(x)|^{q}\dx \, &\leq \, \|\nabla f_j\|^{q-1}_1\int_{B(0,\delta)\cap E^{j}_1}\frac{1}{ \omega_d (\lambda\delta)^{d}}\int_{B_{x,j}}|\nabla f_j(y)|\dy\dx\\
&\leq \, \frac{1}{\lambda^{d}}\|\nabla f_j\|^{q}_1 \\
&\lesssim \, \frac{1}{\lambda^{d}}\|\nabla f\|^{q}_1 
\end{align*}
for all $j \geq j_{\epsilon}$, and the same holds in $B(0,\delta) \cap E_1$.

On the other hand, in $E^{j}_2$, the bound \eqref{eq:sobolev bound} yields
\begin{align*}
\int_{B(0,\delta)\cap E^{j}_2}|\nabla M_{\beta}f_j(x)|^{q}\dx \, &\leq \, \int_{B(0,\delta)}|\nabla M_{\beta}(f_j\chi_{B(0,(\lambda+1)\delta)})(x)|^{q} \dx \\
&\lesssim \, \|\nabla f_{j}\|^{q}_{L^{1}(B(0,(1+\lambda)\delta))}\\
&\leq \, \epsilon^q
\end{align*}
for all $j \geq j_{\epsilon}$, and the same holds in $B(0,\delta) \cap E_2$.

It then suffices to choose $\lambda>0$ large enough so that $\| \nabla f \|_1/\lambda^{d/q} < \epsilon$.
\end{proof}

It is clear that Lemmas \ref{anillo} and \ref{bolita} can be combined to obtain Proposition \ref{prop:convergence compact} and concluding then \eqref{eq:goal}, which is the desired continuity result.

\bibliography{Reference}
\bibliographystyle{amsplain}

\end{document}